\newtheoremstyle{nonum}{}{}{\itshape}{}{\bfseries}{.}{ }{\thmnote{#3}}
\newtheorem{thm}{Theorem}[section]%
\newtheorem*{thm*}{Theorem}
\newtheorem{cor}[thm]{Corollary}
\newtheorem{lem}[thm]{Lemma}
\newtheorem{prop}[thm]{Proposition}
\newtheorem{rem}[thm]{Remark}
\newtheorem{conj}{Conjecture}
\newtheorem*{definition*}{Definition}
\newtheorem*{rems*}{Remarks}
\theoremstyle{nonum}
\newcommand{\R}{\mathbb R}
\newcommand{\RR}{\mathbb R}
\newcommand{\RRn}{\RR_+}
\def\Vol{{\rm Vol}}
\newcommand{\iprod}[2]{\langle #1,#2 \rangle} %
\def\vol{{\rm Vol}}
\def\conv{{\rm conv}}
\newtheoremstyle{nonum}{}{}{\itshape}{}{\bfseries}{.}{ }{\thmnote{#3}}
\newtheorem*{lem*}{Lemma}
\theoremstyle{remark}
\newcommand{\Gnc}{\mathcal{G}_n^c}
\newcommand{\Gnjc}{\mathcal{G}_{n,j}^c}
\def\Vol{{\rm Vol}}
\def\conv{{\rm conv}}
\newcommand{\dual}{^\circ}
\keywords{anti-blocking bodies, difference bodies, (mixed) volume
inequalities, Godberson's conjecture, Mahler volume, Saint-Raymond inequality,
$C$-bodies, decompositions, $2$-dimensional posets, Sidorenko inequalities}
\subjclass[2020]{
52A20, %
52A39, %
52A40, %
06A07} %
\begin{document}

\title {Geometric Inequalities for Anti-Blocking Bodies}

\author{Shiri Artstein-Avidan}
\author{Shay Sadovsky}
\address{School of Mathematical Sciences, Tel Aviv University, Israel}
\email{shiri@tauex.tau.ac.il}
\email{shaysadovsky@mail.tau.ac.il}

\author{Raman Sanyal}
\address{Institut f\"ur Mathematik, Goethe-Universit\"at Frankfurt, Germany} 
\email{sanyal@math.uni-frankfurt.de}

\maketitle
\begin{abstract}
    We study the class of (locally) anti-blocking bodies as well as some
    associated classes of convex bodies. For these bodies, we prove geometric
    inequalities regarding volumes and mixed volumes, including Godberson's
    conjecture, near-optimal bounds on Mahler volumes, Saint-Raymond-type
    inequalities on mixed volumes, and reverse Kleitman inequalities for mixed
    volumes. We apply our results to the combinatorics of posets and prove
    Sidorenko-type inequalities for linear extensions of pairs of
    $2$-dimensional posets. 
    
    The results rely on some elegant decompositions of differences of
    anti-blocking bodies, which turn out to hold for anti-blocking bodies with
    respect to general polyhedral cones.
\end{abstract}

\section{Introduction and results}\label{sec:intro}
\newcommand\Def[1]{\textbf{#1}}%
\newcommand\PM{\{-1,1\}}%

A convex body $K \subseteq\RR^n$ is called \Def{$1$-unconditional} if
$(x_1,\dots,x_n) \in K$ implies that the points $(\pm x_1,\dots, \pm x_n)$
also belong to $K$. Clearly such a body is determined by its subset  $K_+ := K
\cap \RRn^n$, which is itself a convex set of a special kind, called an
``anti-blocking body''~\cite{Fulkerson1971blocking, Fulkerson1972anti} or a
``convex corner''~\cite{Bollobas-Reverse-Kleitman}. A convex body $K \subset
\RRn^n$ is called \Def{anti-blocking} if for all $y \in K$ and $x \in \RRn^n$
with $x_i \le y_i$ for all $i=1,\dots,n$, it holds that $x \in K$.  It is
clear that $1$-unconditional bodies and anti-blocking bodies are in one-to-one
correspondence.

The class of \emph{locally} anti-blocking bodies is defined to be those bodies
in $\RR^n$ such that all of their orthants are anti-blocking. More formally,
for $\sigma \in \PM^n$ and $S \subseteq\R^n$ let us write $\sigma S = \{ (
\sigma_1 x_1, \dots, \sigma_n x_n) : x \in S \}$. We define a convex body $K
\subseteq\RR^n$ to be \Def{locally anti-blocking} if $(\sigma K) \cap \RRn^n$
is anti-blocking for all $\sigma \in \PM^n$. In particular, the distinct bodies
$K_\sigma := K \cap \sigma \RRn^n$ form a \Def{dissection} of $K$.  That is,
$K = \bigcup_\sigma K_\sigma$ and the $K_\sigma$ have disjoint interiors. By
definition, $1$-unconditional convex bodies are precisely the locally
anti-blocking bodies with $\sigma K_\sigma = K_+$ for all $\sigma$.

In this paper we will investigate the structure of locally anti-blocking
bodies and use it to show various volume and mixed volume inequalities for
this class of bodies. As we shall explain in Section~\ref{sec:ab-bodies}, this
class is closed under convex hull, intersections, Minkowski addition, and
polarity.  The dissection presented yields good descriptions of their (mixed)
volumes. We will prove some well-known conjectures for this special class of
convex bodies. Two specific instances of locally anti-blocking bodies which
shall play a central role in this paper are the following: Let $K_1, K_2
\subset \RRn^n$ be anti-blocking bodies. Then
\begin{equation}\label{eq:main_ex}
    K_1 - K_2 = \{ p_1-p_2 : p_1 \in K_1, p_2 \in K_2\}  \quad \text{ and }
    \quad K_1 \vee (-K_2) := \conv( K_1 \cup -K_2 )
\end{equation}
are locally anti-blocking bodies. These constructions have occurred in the
context of geometric combinatorics; see~\cite{Sanyal-ABbodies} and references therein. We shall also use anti-blocking bodies to build the so-called $C$-bodies, which arose in works of Rogers and Shephard, and which are themselves not locally-anti-blocking but share some of their structural properties. 

\subsection{Godberson conjecture}
A natural way to construct a centrally-symmetric convex body from an arbitrary
convex body $K$ is by its \emph{difference body}, $K-K$. The volume of the
difference body was studied by Rogers and Shephard in~\cite{RS-difference},
and bounded from above by $\binom{2n}{n}\vol(K)$ with equality only for
simplices. The volume of the difference body may also be interpreted in terms
of mixed volumes of $K$ and $-K$. The mixed volume, introduced by Minkowski,
is a non-negative symmetric function $V$ defined on $n$-tuples of convex
bodies in $\RR^n$ that satisfies
\begin{equation}
    \vol_n(\lambda_1 K_1 + \dots + \lambda_n K_n) = \sum_{i_1,\dots, i_n =1}^n
    \lambda_{i_1} \cdots \lambda_{i_n} V(K_{i_1},\dots, K_{i_n}).
\end{equation}
for any convex bodies $K_1,\dots, K_n \subset \RR^n$ and $\lambda_1, \dots,
\lambda_n \ge 0$; see~\cite{Schneider-book-NEW} for more on this.  We call
$V(K_1,\dots, K_n)$ the \Def{mixed volume} of $K_1,\dots, K_n$. We write
$V(K[j], T[n-j])$ to denote the mixed volume of two bodies $K$ and $T$, where
$K$ appears $j$ times and $T$ appears $n-j$ times.

In light of the description of mixed volumes, we may restate Rogers and Shephard's result as follows:
\[
    \vol(K-K) \ = \ \sum_{j=0}^n \binom{n}{j}V(K[j],-K[n-j]) \ \leq \ \sum
    \binom{n}{j}^2 \vol(K) \ = \ \binom{2n}{n}\vol(K).
\]
Godbersen conjectured that the inequality is in fact term-by-term: 
\begin{conj}[Godberson~\cite{Godbersen}]
    For any convex body $K \subseteq\RR^n$ and $0 < j < n$
    \begin{equation}\label{eqn:Godberson}
        V(K[j],-K[n-j]) \ \leq \ \binom{n}{j} \vol (K).	 
    \end{equation}
    Equality holds if and only if $K$ is a simplex.
\end{conj}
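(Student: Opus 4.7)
The statement as worded is Godbersen's conjecture (1938), which to my knowledge remains open in full generality. Any proof plan is therefore speculative; here is the approach I would try, together with the expected obstruction.

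First, I would verify the proposed equality case directly. For the standard simplex $\Delta = \conv\{0,e_1,\dots,e_n\}$, a direct polytopal mixed-volume computation (or an expansion of $\vol(s\Delta + t(-\Delta))$ in $s,t$ via the Steiner formula) yields $V(\Delta[j],-\Delta[n-j]) = \binom{n}{j}\vol(\Delta)$. This pins down the extremal shape and suggests that the correct strategy is to push an arbitrary $K$ toward a simplex while controlling the functional $F(K) := V(K[j],-K[n-j])/\vol(K)$.

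Second, I would attempt a shadow-system reduction. Recall that a shadow system $(K_t)_{t\in[0,1]}$ along a direction $\theta$ is a parallel-chord movement of $K$; by Shephard's theorem $t\mapsto V(K_t[j],-K_t[n-j])$ is convex on $[0,1]$ while $t\mapsto\vol(K_t)$ is constant. Hence $F$ is maximized at extreme points of shadow orbits. If one could show that every convex body can, via iterated shadow movements, be monotonically deformed into a simplex (analogous to how Steiner symmetrization converges to a ball), the conjecture would follow. This is precisely where the argument breaks down in the known literature: shadow systems admit no canonical attractor toward simplices, and there is no known monotone invariant that isolates the simplex class among all convex bodies.

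As an alternative, one could use the integral representation $V(K[j],-K[n-j]) = \tfrac{1}{n}\int h_{-K}(u)\,dS_j(K;u)$ and bound $h_{-K}$ pointwise in terms of $h_K$ after a judicious centering of $K$ (e.g.\ at its centroid, where classical estimates give $h_{-K} \le n\, h_K$). This route yields bounds of the form $\binom{n}{j} c^n \vol(K)$ with $c>1$, and eliminating the exponential factor would require a genuinely new ingredient. Given the apparent hardness of the full conjecture, the pragmatic plan — and I expect this paper to follow it — is to verify the inequality on a rich structured subclass, namely (locally) anti-blocking bodies, where the decompositions of $K_1-K_2$ highlighted in \eqref{eq:main_ex} reduce the estimate to a tractable combinatorial calculation on the $2^n$ orthants; the main obstacle I foresee for the general statement, namely finding a reduction to simplices, is thus sidestepped rather than resolved.
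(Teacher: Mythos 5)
Your reading of the situation is accurate: the statement is Godbersen's conjecture, it is open in general, and the paper does not prove it either --- it proves it only for the class of anti-blocking bodies (Theorem~\ref{thm:god-for-closed-down}), exactly the ``pragmatic plan'' you predict. Your verification of the simplex equality case and your assessment of why shadow-system and centering arguments stall are reasonable, and none of those digressions are needed for (or used in) the paper's actual argument.

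Where your sketch falls short of the paper's proof of the anti-blocking case is in the final step, which you describe as ``a tractable combinatorial calculation on the $2^n$ orthants.'' That is not how the estimate closes. The decomposition (Corollary~\ref{cor:decomposition-conv}) gives
\[
V(K[j],-K[n-j]) \ = \ \binom{n}{j}^{-1}\sum_{E\in\Gnjc} \vol_j(K\cap E)\,\vol_{n-j}(P_{E^\perp}K),
\]
using that $P_EK=K\cap E$ for anti-blocking $K$, but bounding each summand by $\binom{n}{j}\vol(K)$ requires a genuine geometric input: the Rogers--Shephard lemma for sections and projections, $\vol_j(K\cap E)\,\vol_{n-j}(P_{E^\perp}K)\le\binom{n}{j}\vol_n(K)$. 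Your proposal never identifies this lemma, and without it the reduction does not produce the bound. Likewise, the equality characterization is not automatic: it uses the Rogers--Shephard equality condition (which for anti-blocking $K$ and coordinate $E$ becomes $K=(K\cap E)\vee(K\cap E^\perp)$, Remark~\ref{rem:equality-RS-antiB}) together with Proposition~\ref{prop:RS-AB-equality}, which shows that a full-dimensional anti-blocking body satisfying this for all $j$-dimensional coordinate subspaces must be a simplex. So, as a blind attempt, you correctly located the strategy but left out the two ingredients that actually carry the proof.
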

Despite many efforts, the best known bound for $V(K[j],-K[n-j])$ to date is
$\frac{n^n}{j^j (n-j)^{(n-j)}}\vol(K)$, differing from the conjectured bound
by up to $\sqrt{n}$, see \cite{ArtsteinKeshet} and \cite{Artstein-Note}.
The conjecture in full is known only for very specific classes of bodies such
as bodies of constant width \cite{Godbersen}. Our first result is that for
anti-blocking bodies, the conjecture holds.

\begin{thm}\label{thm:god-for-closed-down}
    Anti-blocking bodies satisfy Godberson's conjecture.
\end{thm}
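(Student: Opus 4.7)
The plan is to exploit the orthant dissection of the locally anti-blocking body $K-K$ developed in Section~\ref{sec:ab-bodies}. The first step is the product identification
\[
(K_1-K_2)\cap\sigma\RRn^n \;=\; \pi_{S^c}(K_1)\,\times\,\bigl(-\pi_S(K_2)\bigr),
\]
valid for any anti-blocking bodies $K_1,K_2\subseteq\RRn^n$ and sign vector $\sigma\in\{-1,+1\}^n$ with $S:=\{i:\sigma_i=-1\}$; here $\pi_U$ denotes coordinate projection onto $\RR^U$ (embedded by zero-extension). Both inclusions follow from the anti-blocking property: for $x=a-b$ lying in the left-hand side, the coordinate truncations $a'_i:=\max(x_i,0)$ and $b'_i:=\max(-x_i,0)$ are supported on $S^c$ and $S$, satisfy $a'\leq a$ and $b'\leq b$ componentwise (hence remain in $K_1$ and $K_2$), and give $x=a'-b'$. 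Summing volumes over all $2^n$ sign patterns produces
\[
\vol(K_1-K_2) \;=\; \sum_{S\subseteq[n]}\vol_{|S^c|}\bigl(\pi_{S^c}(K_1)\bigr)\,\vol_{|S|}\bigl(\pi_S(K_2)\bigr).
\]

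Applying this identity with $K_1=\lambda K$, $K_2=\mu K$ and matching the coefficient of $\lambda^j\mu^{n-j}$ against the defining polynomial expansion of $\vol(\lambda K+\mu(-K))$ in mixed volumes yields the clean formula
\[
\binom{n}{j}\,V(K[j],-K[n-j]) \;=\; \sum_{|T|=j}\vol_j\bigl(\pi_T(K)\bigr)\,\vol_{n-j}\bigl(\pi_{T^c}(K)\bigr). \qquad(\ast)
\]

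It now suffices to establish, term-by-term, Meyer's inequality for anti-blocking bodies: for every $T\subseteq[n]$ with $|T|=j$,
\[
\vol_j\bigl(\pi_T(K)\bigr)\,\vol_{n-j}\bigl(\pi_{T^c}(K)\bigr) \;\leq\; \binom{n}{j}\,\vol(K). \qquad(\dagger)
\]
For $j=1$, $(\dagger)$ drops out of slicing: with $m$ the maximum of the distinguished coordinate on $K$ and $g(t)$ the transverse slice volume, Brunn--Minkowski makes $g^{1/(n-1)}$ concave on $[0,m]$, while anti-blocking forces $g$ to attain its maximum at $t=0$ with $g(0)=\vol_{n-1}(\pi_{T^c}(K))$; the chord lower bound $g(t)\geq g(0)(1-t/m)^{n-1}$ then integrates to $\vol(K)\geq m\,g(0)/n$. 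The general $j$ case follows either by iterating this single-coordinate step or, more directly, by applying a Berwald-type inequality to the function $y\mapsto\vol_{n-j}(K_y)^{1/(n-j)}$ on $\pi_T(K)$, which is concave and attains its maximum at the origin by anti-blocking. Summing $(\dagger)$ over the $\binom{n}{j}$ subsets $T$ of size $j$ and combining with $(\ast)$ yields $\binom{n}{j}V(K[j],-K[n-j])\leq\binom{n}{j}^2\vol(K)$, which is~\eqref{eqn:Godberson}; the corner simplex $\conv\{0,e_1,\ldots,e_n\}$ saturates every term of $(\dagger)$ and hence the full inequality, giving the equality characterization. The one substantive input is $(\dagger)$; the rest of the argument is bookkeeping on top of the orthant decomposition.
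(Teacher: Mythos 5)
Your argument has the same skeleton as the paper's: identity $(\ast)$ is exactly the paper's Corollary~\ref{cor:decomposition-conv}, Equation~\eqref{eq:jmixformula}, and inequality $(\dagger)$ is exactly the Rogers--Shephard Lemma~\ref{thm:rogers-shephard-sections-projections} for sections and projections, specialized to coordinate subspaces $E = \RR^T$ using $P_E K = K \cap E$ for anti-blocking $K$. So the overall route is identical; the only genuine variation is how you establish $(\dagger)$. The paper proves the Rogers--Shephard lemma by Schwartz symmetrization with respect to $E$ followed by a convex-hull argument, whereas you slice along $T$, invoke Brunn's theorem to get $(1/(n-j))$-concavity of $y \mapsto \vol_{n-j}(K_y)$, use the anti-blocking property to locate the maximum of the slice function at the origin, and then appeal to a Berwald-type integral bound (or, for $j=1$, the explicit chord estimate). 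Both proofs are standard and yield the same conclusion; yours is more self-contained for the anti-blocking case, the paper's applies to general convex bodies and general subspaces.

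Two small points to tighten. First, the phrase ``iterating this single-coordinate step'' as an alternative to Berwald for general $j$ does not work as stated: the $j$-dimensional quantity $\vol_j(\pi_T K)$ does not factor as a product of one-dimensional projection lengths, so the iterated one-coordinate bound does not chain to give $(\dagger)$ for $j \ge 2$. Stick with the Berwald argument, which does go through. Second, and more substantively, your treatment of the equality case is incomplete. You verify that the corner simplex saturates $(\dagger)$ and hence the theorem, but Godbersen's conjecture asserts equality holds \emph{only} for the simplex, and you have not shown that. The paper handles this direction through the equality condition in the Rogers--Shephard lemma (Remark~\ref{rem:equality-RS-antiB}: equality forces $K = (K \cap E) \vee (K \cap E^\perp)$ for every $j$-dimensional coordinate subspace $E$) combined with Proposition~\ref{prop:RS-AB-equality}, which shows that this rigidity for all such $E$ forces $K$ to be a simplex. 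To complete your proof you would need to extract the equality condition from your Berwald step (concavity is equality only for a linear profile, which forces the corresponding convex-hull structure) and then run the same elimination argument as in Proposition~\ref{prop:RS-AB-equality}.
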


The proof for the theorem makes use of the fact that the difference bodies of
anti-blocking bodies are locally anti-blocking and is explained in Section
\ref{sec:God}. Among anti-blocking bodies, the only equality case is indeed
the simplex. Coupled with the fact that the only anti-blocking bodies with a
center of symmetry are axes-parallel boxes, we get that the quantity $ \vol
(K) /V(K[j],-K[n-j]) $ serves as a measure of symmetry in the sense of
Gr\"unbaum \cite{Grunbaum} for the class of anti-blocking bodies. 

\subsection{Mahler conjecture}
Another family of results we discuss revolves around Mahler's conjecture for
centrally-symmetric bodies. We let $K\dual$ denote the \Def{polar} (or
\Def{dual}) of a convex body $K$
\[ 
    K^\circ = \{y\in \RR^n:  \iprod{x}{y}\le 1 \text{ for all } x \in K\}.
\]
A long standing open problem in convex geometry is to bound tightly from below
the \Def{volume product} $\vol(K)\vol(K^\circ)$ (sometimes called the
Santal\'o or Mahler volume) of a convex body $K$ containing the origin in
its interior. This quantity, which is a linear invariant, has many
different interpretations, see e.g.~\cite{ArtsteinOstrover-symplectic,
BourgainMilman}.  In the general case, where no central symmetry is assumed,
one usually assumes that $K$ is {\bf{centered}}, namely that the center of
mass of $K$ is at the origin (it is easy to check that the center of mass of
$K^\circ$ is the origin, if one takes the infimum of the volume product over
translations of $K$). The Mahler volume  of a centered convex body is bounded
from above by $\vol(B_2^n)^2$ \cite{Santalo, blaschke1917affine}, and is
conjectured to be bounded from below by its value on a regular simplex (in the
general case) and by its value on a cube (in the centrally-symmetric case).
We will be concerned with the latter, which can be written as

\begin{conj}[Mahler Conjecture for centrally-symmetric bodies]\label{conj:Mahler}
    If $K \subseteq\R^n$ is  a centrally-symmetric convex body with non-empty interior
    then
    \begin{equation}\label{eqn:Mahler}
        \vol(K) \cdot \vol(K\dual) \ \ge \  \frac{4^n}{n!}.
    \end{equation}
\end{conj}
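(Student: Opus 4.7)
The statement labeled as Conjecture~\ref{conj:Mahler} is the centrally-symmetric Mahler conjecture, a longstanding open problem: it was settled by Saint-Raymond in the $1$-unconditional case, by Reisner for zonoids, and by Iriyeh--Shibata in dimension $3$, but remains open for general centrally-symmetric bodies in dimension $n \ge 4$. I therefore do not expect to produce a full proof; what follows is an honest outline of the strategy I would try within the framework of this paper, together with the precise point at which it breaks down.

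The plan is a reduction strategy: transform an arbitrary centrally-symmetric $K$ into a $1$-unconditional body while decreasing (or at least not increasing) the volume product $\vol(K) \cdot \vol(K\dual)$, and then invoke a Saint-Raymond-type inequality—whose anti-blocking incarnation is one of the main tools of this paper—to conclude $\vol(K)\vol(K\dual) \geq 4^n/n!$. Concretely, I would choose an orthonormal basis $e_1,\dots,e_n$ and attempt an iterative symmetrization: at step $i$ apply an operation $\Phi_i$ that renders $K$ symmetric under $x_i \mapsto -x_i$, so that after $n$ steps the body is $1$-unconditional. For $\Phi_i$ itself, Steiner symmetrization preserves $\vol(K)$ but can \emph{increase} $\vol(K\dual)$, so that candidate fails immediately; a more promising choice is a \emph{shadow system} (as used by Campi--Gronchi and Meyer--Reisner for other Mahler-type problems), where one parametrizes a family $K_t$ of centrally-symmetric bodies with $\vol(K_t)$ constant, shows that $t \mapsto \vol(K_t\dual)$ is convex (this is the Meyer--Reisner style input), and picks the extremal $t$ forcing an additional symmetry. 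Iterating, the extremal body in the resulting $n$-parameter family should be $1$-unconditional.

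Once the reduction succeeds, the $1$-unconditional step is exactly where the machinery of Section~\ref{sec:ab-bodies} pays off: a $1$-unconditional $K$ corresponds to its anti-blocking piece $K_+ = K \cap \RRn^n$, and similarly for $K\dual$, and the known identity
\[
    \vol(K)\cdot\vol(K\dual) \ = \ 4^n \, \vol(K_+)\cdot\vol((K_+)^\Ac),
\]
where $(K_+)^\Ac$ is the anti-blocking polar, reduces the inequality to $\vol(K_+)\cdot\vol((K_+)^\Ac) \ge 1/n!$. The latter is Saint-Raymond's theorem, which the dissection/decomposition results of this paper reprove in the anti-blocking language.

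The main obstacle—and the reason the conjecture is open—is the first step. No known symmetrization simultaneously (i) preserves central symmetry, (ii) is monotone in the polar volume, and (iii) converges to $1$-unconditional bodies. Concretely, the shadow systems one can write down either fail the needed convexity of $t \mapsto \vol(K_t\dual)$ beyond a single symmetrization direction, or drive the body toward an affine image of a $1$-unconditional one but without quantitative control on the affine class. A secondary obstacle is that even if one achieves $1$-unconditionality after $n$ steps, the extremizer of the resulting optimization is not unique—cubes and any $1$-unconditional simplex-like bodies both need to be handled—so the argument must be robust enough to close without uniqueness. My honest expectation is that the approach of this paper yields the conjecture for the locally anti-blocking class and near-optimal exponential-factor bounds in general, as its abstract advertises, rather than the sharp inequality~\eqref{eqn:Mahler} in full generality.
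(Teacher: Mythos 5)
You are right that this statement is not a theorem of the paper at all: it is stated as Conjecture~\ref{conj:Mahler}, a longstanding open problem, and the paper offers no proof of it in general, only (i) the locally anti-blocking case, deduced from Saint-Raymond's inequality via the polar decomposition of Lemma~\ref{lem:polarity-AB-decomposes} and Cauchy--Schwarz (Corollary~\ref{cor:mahler-for-locally-ab}), and (ii) near-optimal bounds for the bodies $C_\lambda(K)$ with anti-blocking base (Theorem~\ref{thm:mahler-for-c-bodies}). Your refusal to claim a full proof is therefore the correct assessment, and your account of where a symmetrization/shadow-system reduction to the $1$-unconditional case breaks down, as well as of what the paper actually establishes, is consistent with the paper's own treatment; there is no gap to report, since neither you nor the paper proves the general statement.
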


Mahler's conjecture is known to hold in dimensions $n\le 3$, see
\cite{IriyehShibata} and \cite{FradeliziHubardMeyerPensadoZvavitch}. It is
also known to hold for specific classes of bodies, for example for 
zonotopes~\cite{r-zonoids}.

Saint-Raymond~\cite{Saint-Raymond} proved Mahler's conjecture for
$1$-unconditional convex bodies, essentially by proving the equivalent form
for anti-blocking bodies. For $K \subseteq\RR^n$ anti-blocking, define 
\begin{equation}\label{eqn:AK}
    AK \ := \ \{ y \in \RRn^n : \iprod{y}{x} \le 1  \text{ for all } x \in K
    \} \, .
\end{equation}
It is easy to see that $AK$ is again anti-blocking and $A(AK) = K$.
Saint-Raymond proved that if $K$ is a full-dimensional anti-blocking body,
then
\begin{equation}\label{eqn:SaintRaymond}
    \vol(K) \cdot \vol(AK) \ \ge \ \frac{1}{n!} \, .
\end{equation}
If $K$ is $1$-unconditional, then $(K\dual)_+ = AK_+$ and $\vol(K) = 2^n
\vol(K_+)$.  It is not hard to check that Saint-Raymond's result implies that
Conjecture \ref{conj:Mahler} in fact holds for the larger class of
locally anti-blocking bodies, and without central symmetry assumption;
cf.~Corollary~\ref{cor:mahler-for-locally-ab}

It turns out that considering anti-blocking bodies instead of $1$-unconditional ones, gives rise to a strengthening of Saint-Raymond's inequality which was not observed before:

\begin{thm}[Saint-Raymond-type inequality for Mixed
    Volumes]\label{thm:mahler-for-vj}
    For $0 \le j \le n$, let $K  \subset
    \RRn^n$ be anti-blocking, then 
    \begin{equation}\label{eq:mixed-sr-simple2}
    V(K[j],-K[n-j]) \cdot V(AK[j],-AK[n-j]) \ \ge \ \frac{1}{j!(n-j)!} \,.
    \end{equation}
    Moreover, for $K,T \subseteq\RRn^n$ anti-blocking,
    \begin{equation}\label{eq:mixed-sr-simple}
    V(K[j],-T[n-j]) \cdot V(AK[j],-AT[n-j]) \ \ge \ \frac{1}{j!(n-j)!} \, .
    \end{equation}
    In even higher generality,         
    let $K_1, \dots, K_j, T_1, \dots, T_{n-j} \subset
    \RRn^n$ be anti-blocking bodies. Then,
    \begin{equation}\label{eq:mixed-sr-comlicated}
    V(K_1,\dots, K_j, -T_1,\dots -T_{n-j}) \cdot V(AK_1,\dots AK_j, -AT_1,\dots
    -AT_{n-j}) 
	\ \ge \ \frac{1}{j!(n-j)!} \, .
\end{equation}
\end{thm}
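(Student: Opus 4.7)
My plan is to prove the fully general inequality~\eqref{eq:mixed-sr-comlicated}, from which \eqref{eq:mixed-sr-simple2} and \eqref{eq:mixed-sr-simple} follow by specializing all $K_i$'s (and $T_s$'s). The strategy has three ingredients: an orthant dissection of a locally anti-blocking body built from the $K_i$'s and $T_s$'s (yielding a product-of-sums formula for the mixed volume in question), a Cauchy--Schwarz step, and classical Saint-Raymond (combined with the Alexandrov--Fenchel inequality) applied on coordinate subspaces of dimensions $j$ and $n-j$.

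For parameters $\lambda_i, \mu_s \ge 0$, set $K(\lambda) := \sum_i \lambda_i K_i$ and $T(\mu) := \sum_s \mu_s T_s$ (both anti-blocking), and consider the locally anti-blocking body $L := K(\lambda) - T(\mu)$. For each $I \subseteq [n]$, letting $\sigma_I \in \PM^n$ be the signature that is $+1$ on $I$ and $-1$ on $I^c$, the downward-closed property of anti-blocking bodies yields
\[
L \cap \sigma_I \RRn^n \ = \ \bigl\{(a,-b)\in \RR^I \times \RR^{I^c} : a \in K(\lambda)|_I, \ b \in T(\mu)|_{I^c}\bigr\},
\]
a coordinate box of $n$-volume $\vol(K(\lambda)|_I)\cdot \vol(T(\mu)|_{I^c})$, where $K|_I$ denotes the coordinate projection onto $\RR^I$. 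Summing over orthants gives $\vol(L) = \sum_I \vol(K(\lambda)|_I)\vol(T(\mu)|_{I^c})$. Expanding both sides in $(\lambda,\mu)$, using that projection commutes with Minkowski sums so $K(\lambda)|_I = \sum_i \lambda_i K_i|_I$, and comparing the coefficient of $\lambda_1\cdots\lambda_j \mu_1\cdots\mu_{n-j}$ (which only arises from terms with $|I|=j$) yields the identity
\[
V(K_1, \dots, K_j, -T_1, \dots, -T_{n-j}) \ = \ \frac{1}{\binom{n}{j}} \sum_{|I| = j} V(K_1|_I, \dots, K_j|_I) \cdot V(T_1|_{I^c}, \dots, T_{n-j}|_{I^c}).
\]
Because for anti-blocking bodies the coordinate projection and coordinate slice coincide, one has $A(K|_I) = (AK)|_I$, and hence the same formula applied to the polar bodies expresses $V(AK_1, \dots, AK_j, -AT_1, \dots, -AT_{n-j})$ as a completely analogous sum.

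Writing $a_I$ and $b_I$ for the summands of the non-polar and polar expressions respectively, Cauchy--Schwarz gives $\bigl(\sum_I a_I\bigr)\bigl(\sum_I b_I\bigr) \ge \bigl(\sum_I \sqrt{a_I b_I}\bigr)^2$. For each $|I|=j$, Alexandrov--Fenchel yields $V(K_1|_I,\dots,K_j|_I) \ge \prod_i \vol(K_i|_I)^{1/j}$ and similarly for the polar bodies; multiplying and applying classical Saint-Raymond $\vol(K_i|_I)\vol(A(K_i|_I)) \ge 1/j!$ in $\RRn^j$ gives $V(K_1|_I,\dots,K_j|_I)\cdot V(A(K_1|_I),\dots,A(K_j|_I)) \ge 1/j!$. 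The same argument in dimension $n-j$ yields the corresponding bound $1/(n-j)!$ for the $T$-factors. Therefore each $\sqrt{a_I b_I}\ge 1/\sqrt{j!(n-j)!}$, and summing over the $\binom{n}{j}$ subsets, squaring, and dividing by $\binom{n}{j}^2$ produces exactly $1/(j!(n-j)!)$, as required. The main obstacle is the orthant dissection formula: verifying that $L \cap \sigma_I \RRn^n$ is \emph{exactly} the asserted coordinate box uses the anti-blocking property crucially in both directions---to build representatives $k\in K(\lambda),\ t\in T(\mu)$ with the prescribed support pattern, and then to pass from an arbitrary representative back to $a\in K(\lambda)|_I$ and $b \in T(\mu)|_{I^c}$ via monotonicity. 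Once this dissection is in hand, the remaining argument is a clean combination of polynomial matching, Cauchy--Schwarz, Alexandrov--Fenchel, and classical Saint-Raymond.
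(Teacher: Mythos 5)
Your proof is correct and is a genuine variant of the paper's argument. Both proofs rest on the same four pillars: the orthant dissection of a difference of anti-blocking bodies (the paper's Lemma~2.4), Cauchy--Schwarz, Saint-Raymond in each coordinate subspace, and Alexandrov--Fenchel. The difference is the order and form in which Alexandrov--Fenchel enters. The paper first establishes the two-body case~\eqref{eq:mixed-sr-simple} via the dissection formula for $V(K[j],-T[n-j])$, and \emph{then} reduces the general case~\eqref{eq:mixed-sr-comlicated} to it by applying the repeated Alexandrov--Fenchel inequality $V(K_1,\dots,K_n)^m \ge \prod_{i=1}^m V(K_i[m],K_{m+1},\dots,K_n)$ twice, at the level of the full $n$-dimensional mixed volumes. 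You instead derive a multi-body generalization of the dissection formula directly,
\[
V(K_1,\dots,K_j,-T_1,\dots,-T_{n-j}) \ = \ \tbinom{n}{j}^{-1}\sum_{|I|=j} V(K_1|_I,\dots,K_j|_I)\,V(T_1|_{I^c},\dots,T_{n-j}|_{I^c}),
\]
by polarizing $K(\lambda)-T(\mu)$ in the parameters $\lambda,\mu$, and then invoke the simpler Alexandrov--Fenchel consequence $V(L_1,\dots,L_m)\ge \prod_i\vol(L_i)^{1/m}$ \emph{inside} each coordinate subspace of dimension $j$ and $n-j$ to reduce each summand to a product of pairwise Saint-Raymond bounds. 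Your coefficient-matching derivation of the multi-body dissection formula is correct and is arguably the more transparent package, since it avoids the two-stage structure and makes clear that the underlying geometric fact is a single polarized identity; the paper's route gets the same result but keeps the decomposition at the two-body level, paying for it with the slightly heavier ``repeated'' form of Alexandrov--Fenchel.
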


Our second result in the direction of Mahler's conjecture has to do with another special class of bodies.
In~\cite{RS-sections-projections}, Rogers and Shephard introduced a centrally symmetric convex body associated to every  convex body $K \subseteq\RR^n$, named the \Def{$C$-body} of $K$, 
\[
    C(K) \ := \ \conv( K \times \{1\} \cup -K \times \{-1\} ) \subset
    \RR^{n+1} \, .
\]
We shall refer to $K$ as the \Def{base} of $C(K)$. Although probably by
coincidence, the name \emph{$C$-body} is suitable, as the construction
coincides with the \emph{Cayley construction}  of $K$ and $-K$;
see~\cite{DLRS}. 

With our understanding of anti-blocking bodies, we prove a lower bound on the
Mahler volume of $C$-bodies with an anti-blocking base $K$, which differs from
the conjectured bound by a factor of order $\sqrt{n}^{-1}$. Let us mention
that proving Mahler's Conjecture~\ref{conj:Mahler} for \emph{all} $C$-bodies
implies the conjecture for all convex bodies. Also, the class of $C$-bodies of
anti-blocking bodies includes linear images of both the cube and its dual, the
crosspolytopes. 
We show 
\[
    \vol(C(K)) \cdot \vol(C (K)\dual) \  \ge \ \frac{\sqrt{2\pi n}}{n+1}
    \cdot \frac{4^{n+1}}{(n+1)!}.
\] 	

In fact we will show this lower bound for the Mahler product of bodies in a
somewhat wider class of bodies. Given an anti-blocking body
$K\subseteq\RRn^n$, define a $1$-parameter family of centrally-symmetric body in $\RR^{n+1}$, for  $0 \le \lambda \le
1$,   by 
\[
    C_\lambda(K) \ := \ \conv( K \times \{\lambda\} \cup -K \times
    \{-\lambda\} \cup [-\mathbf{e}_{n+1}, \mathbf{e}_{n+1} ] ) \subset
    \RR^{n+1} \, ,
\]
where $\mathbf{e}_{n+1} = (0,\dots,0,1)$. Note that when $\lambda = 0$ the
body is locally anti-blocking, so it must satisfy Mahler's conjecture.  For
$\lambda = 1$, the body $C_1(K) = C(K)$. 
\begin{thm}\label{thm:mahler-for-c-bodies}
    If $K \subseteq\RRn^n$ is anti-blocking, then
    \[
        \vol(C_\lambda(K)) \cdot \vol(C_\lambda(K)\dual) \  \ge \ \frac{\sqrt{2\pi n}}{n+1} \cdot
        \frac{4^{n+1}}{(n+1)!}
    \] 	
    for all $0 \le \lambda \le 1$.
\end{thm}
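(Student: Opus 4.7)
The plan is to use horizontal slicing of $C_\lambda(K)$ and its polar, to express both volumes as sums in the mixed volumes $V_j:=V(K[j],-K[n-j])$ and $W_j:=V(AK[j],-AK[n-j])$, and then combine them via Cauchy--Schwarz with Theorem~\ref{thm:mahler-for-vj}. For $\vol(C_\lambda(K))$: when $|t|\le\lambda$ the horizontal slice at height $t$ is the Minkowski combination $\tfrac{\lambda+t}{2\lambda}K+\tfrac{\lambda-t}{2\lambda}(-K)$, whose volume expands polynomially in the $V_j$. When $\lambda<|t|\le 1$ the slice equals $\conv(aK\cup -bK)$ with $a=(1-|t|)/(1-\lambda)$ and $b=(1-|t|)/(1+\lambda)$. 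Since $K$ is anti-blocking, this convex hull is locally anti-blocking, and a direct argument shows that its restriction to the orthant of sign pattern determined by $S\subseteq[n]$ is the convex hull of the two ``faces'' $a(K\cap\RR^S)\subseteq\RR^S$ and $-b(K\cap\RR^{S^c})\subseteq\RR^{S^c}$, sitting in orthogonal subspaces; the standard join computation gives its volume as $a^{|S|}b^{|S^c|}\vol(K\cap\RR^S)\vol(K\cap\RR^{S^c})\cdot\tfrac{|S|!\,|S^c|!}{n!}$. The identity $\binom{n}{j}V_j=\sum_{|S|=j}\vol(K\cap\RR^S)\vol(K\cap\RR^{S^c})$, which holds for anti-blocking $K$ by comparing the Minkowski and orthant expansions of $\vol(sK+t(-K))$, then repackages the side contributions in terms of the $V_j$, yielding
\[
\vol(C_\lambda(K))=\tfrac{2}{n+1}\sum_{j=0}^n c_j(\lambda)\,V_j
\]
for explicit $\lambda$-dependent coefficients $c_j(\lambda)$.

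For the polar, $C_\lambda(K)^\circ$ is cut out by $h_K(y)+s\lambda\le 1$, $h_K(-y)-s\lambda\le 1$ and $|s|\le 1$. For fixed $s\in[-1,1]$ and $y$ in the orthant of sign pattern $S$, the anti-blocking property yields $h_K(y)=h_{K\cap\RR^S}(y_+)$ and $h_K(-y)=h_{K\cap\RR^{S^c}}(y_-)$, so the two constraints decouple and the slice is a product of two scaled anti-blocking duals of volume $(1-s\lambda)^{|S|}(1+s\lambda)^{|S^c|}\vol(A(K\cap\RR^S))\vol(A(K\cap\RR^{S^c}))$. Summing over orthants, integrating in $s$, and using the same identity applied to the anti-blocking body $AK$, produces
\[
\vol(C_\lambda(K)^\circ)=\sum_{j=0}^n\binom{n}{j}I_\lambda(j)\,W_j,\qquad I_\lambda(j):=\int_{-1}^1(1-s\lambda)^j(1+s\lambda)^{n-j}\,ds.
\]
Applying Cauchy--Schwarz to $\bigl(\sum c_jV_j\bigr)\bigl(\sum\binom{n}{j}I_\lambda(j)W_j\bigr)$ and substituting $V_jW_j\ge\binom{n}{j}/n!$ from Theorem~\ref{thm:mahler-for-vj} yields
\[
\vol(C_\lambda(K))\,\vol(C_\lambda(K)^\circ)\ \ge\ \frac{2}{(n+1)\,n!}\Biggl(\sum_{j=0}^n\binom{n}{j}\sqrt{c_j(\lambda)\,I_\lambda(j)}\Biggr)^{\!2}.
\]

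The remaining step is to bound the inner sum uniformly in $\lambda\in[0,1]$. At $\lambda=1$ one computes $c_j\equiv 1$ and $\binom{n}{j}I_1(j)=\tfrac{2^{n+1}}{n+1}$ (independent of $j$), so the sum collapses to $\sqrt{\tfrac{2^{n+1}}{n+1}}\sum_j\sqrt{\binom{n}{j}}$ and the target $\tfrac{\sqrt{2\pi n}}{n+1}\cdot\tfrac{4^{n+1}}{(n+1)!}$ reduces exactly to the Stirling inequality $\bigl(\sum_j\sqrt{\binom{n}{j}}\bigr)^{\!2}\ge 2^n\sqrt{2\pi n}$, which follows from the Gaussian approximation to the central binomial coefficients. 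At the other boundary $\lambda=0$, $C_0(K)$ is locally anti-blocking and Corollary~\ref{cor:mahler-for-locally-ab} already gives the stronger Mahler bound $\tfrac{4^{n+1}}{(n+1)!}$ directly. The main obstacle is the intermediate range $\lambda\in(0,1)$: here I would exploit the palindromic symmetries $V_j=V_{n-j}$, $W_j=W_{n-j}$, and $I_\lambda(j)=I_\lambda(n-j)$ to symmetrize $c_j(\lambda)$ to $\tfrac12(c_j(\lambda)+c_{n-j}(\lambda))$ and show that the symmetrized product $\tfrac12(c_j(\lambda)+c_{n-j}(\lambda))\,I_\lambda(j)$ is minimized in $\lambda$ at $\lambda=1$ (termwise in $j$), thereby reducing the general case to the $\lambda=1$ computation combined with the Stirling inequality.
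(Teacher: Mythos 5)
Your route is genuinely different from the paper's, but the crucial step — reducing a general $\lambda \in [0,1]$ to $\lambda = 1$ — is left as a plan, and the ingredients you set up for it contain a concrete error that makes the proposed termwise reduction unlikely to close.

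First, the slicing formula for $\vol(C_\lambda(K))$ is wrong in the range $|t|\le\lambda$. The body $C_\lambda(K)$ is the convex hull of $K\times\{\lambda\}$, $-K\times\{-\lambda\}$, \emph{and} the segment $[-\mathbf{e}_{n+1},\mathbf{e}_{n+1}]$, so the slice at height $t$ is strictly larger than the Cayley slice $\tfrac{\lambda+t}{2\lambda}K+\tfrac{\lambda-t}{2\lambda}(-K)$ of $\conv(K\times\{\lambda\}\cup -K\times\{-\lambda\})$. Already for $n=1$, $K=[0,1]$, $\lambda=\tfrac12$, $t=\tfrac14$ your formula gives the segment $[-\tfrac14,\tfrac34]$ of length $1$, while the actual slice of $C_{1/2}(K)$ at height $\tfrac14$ is $[-\tfrac12,\tfrac56]$ of length $\tfrac43$. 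Your Minkowski-combination formula is valid only at $\lambda=1$, where $\pm\mathbf{e}_{n+1}\in(K\times\{1\})\cup(-K\times\{-1\})$ already. So the $c_j(\lambda)$ you would obtain from this slicing do not describe $\vol(C_\lambda(K))$.

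Second, the planned termwise minimization at $\lambda=1$ appears to fail. Since in fact $\vol(C_\lambda(K))$ is constant in $\lambda$ (this is Corollary~\ref{cor:vol_Clambda} in the paper, proved via Shephard's shadow-system theorem), a correct slicing would give $c_j(\lambda)\equiv 1$, and your claim reduces to $I_\lambda(j)\ge I_1(j)$ for all $j$ and all $\lambda\in[0,1]$. But $I_\lambda(0)=\int_{-1}^1(1+s\lambda)^n\,ds$ is \emph{increasing} in $\lambda$ on $[0,1]$ (write the integrand as an odd-in-$s$ part times an increasing function), so $I_\lambda(0)\le I_1(0)$ with strict inequality for $\lambda<1$ and $n\ge 2$. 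This is the opposite of what you need for $j=0$ (and by symmetry $j=n$). The sum over $j$ might still behave correctly, but the termwise route as stated breaks down at the outer indices and you offer no argument to control the sum globally.

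The paper sidesteps both difficulties. It observes that $C_\lambda(K)$ is a shadow system in direction $\mathbf{e}_{n+1}$, uses Shephard's theorem to conclude $\vol(C_\lambda(K))$ is constant in $\lambda$ (Corollary~\ref{cor:vol_Clambda}), and then invokes Campi--Gronchi's result that $1/\vol(C_\lambda(K)^\circ)$ is a convex function of $\lambda$ along a shadow system. Since $C_\lambda(K)$ and $C_{-\lambda}(K)$ are reflections, the function is even, so the volume product $\vol(C_\lambda)\vol(C_\lambda^\circ)$ is minimized at $\lambda=\pm1$. This reduces everything to $\lambda=1$, where $\vol(C(K))\vol(C(K)^\circ)=\tfrac{2^{n+2}}{(n+1)^2}\vol(K\vee-K)\vol(AK\vee-AK)$ by Theorem~\ref{thm:duality-respects-C} and Lemma~\ref{lem:volofC}, and then applies Proposition~\ref{prop:nearly-mahler-1}. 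Your computation at $\lambda=1$ (Cauchy--Schwarz combined with $V_jW_j\ge\binom{n}{j}/n!$ from Theorem~\ref{thm:mahler-for-vj}, giving $\tfrac{1}{n!}(\sum_j\binom{n}{j}^{1/2})^2$) is exactly the paper's Proposition~\ref{prop:nearly-mahler-1}, and your Stirling reduction of the final constant is also the one the paper sketches. So the endpoint analysis is sound; what is missing is a valid reduction from general $\lambda$ to $\lambda=1$, and the shadow-system convexity is the mechanism that does this cleanly.
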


We show that the volume of $C_\lambda(K)$ is independent of $\lambda$. The
result then follows from the convexity of reciprocals of volumes of dual
bodies along shadow systems and the following interesting inequality.

\begin{prop} \label{prop:nearly-mahler-1}
	For a full-dimensional anti-blocking $K\subseteq\RR_{+}^n$ we have 
	\begin{equation*}
	\vol(K \vee -K ) \cdot \vol(AK \vee -AK) \ \ge \
    \frac{1}{n!}\left(\sum_{j=0}^n \binom{n}{j}^{\frac{1}{2}}\right)^2 \, . %
	\end{equation*}
\end{prop}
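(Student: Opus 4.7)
The plan is to use the orthant decomposition of the locally anti-blocking body $K \vee -K$ to write its volume as a sum indexed by $S \subseteq [n]$, do the same for $AK \vee -AK$, and then match terms pairwise using Saint-Raymond's inequality applied in each coordinate subspace.

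For $\sigma \in \PM^n$, let $S = \{i : \sigma_i = -1\}$, and let $F_T(K) := K \cap \{x : x_i = 0 \text{ for } i \notin T\}$ be the face of $K$ lying in the coordinate subspace $\RR^T$. The first step is to identify the orthant pieces: I would show
\[
(K \vee -K) \cap \sigma \RRn^n \ = \ \conv(F_{S^c}(K) \cup -F_S(K)).
\]
The inclusion $\supseteq$ is clear since $F_{S^c}(K) \subseteq K$ and $-F_S(K) \subseteq -K$. For $\subseteq$, given $x = \lambda u - (1-\lambda) v$ with $u,v \in K$ lying in the orthant $\sigma \RRn^n$, the anti-blocking property allows truncating $u$ to its $S^c$-coordinates and $v$ to its $S$-coordinates, both of which remain in $K$; a gauge computation $\|x_{S^c}\|_{F_{S^c}(K)} + \|(-x)_S\|_{F_S(K)} \le 1$ then places $x$ in the convex hull.

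Second, for convex bodies $A \subseteq \RR^{S^c}$ and $B \subseteq \RR^S$ containing the origin in complementary subspaces, one has $\conv(A \cup B) = \{(x, y) : \|x\|_A + \|y\|_B \leq 1\}$, and a direct computation with the Beta function gives
\[
\vol_n(\conv(A \cup B)) \ = \ \frac{1}{\binom{n}{|S|}}\, \vol_{|S^c|}(A)\, \vol_{|S|}(B).
\]
Summing over all orthants,
\[
\vol(K \vee -K) \ = \ \sum_{S \subseteq [n]} \frac{\vol(F_{S^c}(K))\, \vol(F_S(K))}{\binom{n}{|S|}},
\]
and the analogous identity holds for $AK \vee -AK$. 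Next I use the duality of faces for anti-blocking bodies: since $K$ is anti-blocking, the section $F_S(K)$ equals the projection $\pi_S(K)$ (every truncation of a point in $K$ to a coordinate subset is still in $K$), so $F_S(AK) = A_S(\pi_S(K)) = A_S(F_S(K))$ where $A_S$ is the anti-polar in $\RR^S$.

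Applying Saint-Raymond's inequality~\eqref{eqn:SaintRaymond} in dimensions $|S|$ and $|S^c|$ yields
\[
\vol(F_S(K))\,\vol(F_S(AK)) \ \ge\ \tfrac{1}{|S|!}, \qquad
\vol(F_{S^c}(K))\,\vol(F_{S^c}(AK)) \ \ge\ \tfrac{1}{|S^c|!},
\]
so that $a_S b_S \ge \binom{n}{|S|}/n!$, where $a_S := \vol(F_{S^c}(K))\vol(F_S(K))$ and $b_S$ is the analog for $AK$. The Cauchy--Schwarz inequality applied to the sequences $\sqrt{a_S/\binom{n}{|S|}}$ and $\sqrt{b_S/\binom{n}{|S|}}$ then gives
\[
\vol(K\vee -K)\cdot \vol(AK \vee -AK) \ \ge\ \left(\sum_{S\subseteq[n]} \frac{\sqrt{a_S b_S}}{\binom{n}{|S|}}\right)^{\!2} \ \ge\ \frac{1}{n!}\left(\sum_{k=0}^n \binom{n}{k}^{1/2}\right)^{\!2},
\]
using $\sum_{|S|=k} 1 = \binom{n}{k}$ in the last step. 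The main technical point to verify carefully is the orthant decomposition in the first paragraph; everything else is computation plus an application of the known Saint-Raymond bound applied in lower dimensions, and the Cauchy--Schwarz coupling of the two sums is what produces the $(\sum_j \binom{n}{j}^{1/2})^2$ factor rather than the weaker $(\sum_j 1)^2$ one would get from applying Saint-Raymond only in dimension $n$.
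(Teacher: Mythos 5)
Your proof is correct and follows essentially the same route as the paper's: the orthant decomposition of Lemma~\ref{lem:decomposition-sum}, Saint-Raymond's inequality applied in complementary coordinate subspaces, and Cauchy--Schwarz. The only organizational difference is that you apply a single Cauchy--Schwarz over all $2^n$ subsets $S\subseteq[n]$, whereas the paper first packages the subsets of a fixed size $j$ into the mixed-volume inequality $V(K[j],-K[n-j])\,V(AK[j],-AK[n-j])\ge \tfrac{1}{j!(n-j)!}$ (Corollary~\ref{cor:mahler-for-vj(K)}, itself proved by Cauchy--Schwarz over those subsets) and then applies a second Cauchy--Schwarz over $j$; the two nestings yield the same bound.
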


\subsection{Reverse Kleitman inequalities}
Our final volume inequality is in a somewhat opposite direction  -- we bound a
Godbersen-type quantity of mixed volumes from below. For $V(K[j], -K[n-j])$
the lower bound $\vol(K)$ is trivial by the Brunn-Minkowski inequality and its
consequences.  When $-K$ is replaced by $-T$,  we may replace the bound by
$\vol(K)^{j/n}\vol(T)^{(n-j)/n}$. When in addition both bodies are
anti-blocking, a stronger inequality of this form holds, and we show the
following inequality, which is motivated by the Reverse Kleitman inequality of
Bollob\'as, Leader, and Radcliffe~\cite{Bollobas-Reverse-Kleitman}:

\begin{thm}\label{thm:kleitman-bound-on-mixed}
	Given two anti-blocking bodies, $K, T \subseteq\RR_+^n$,
	\[ 
        V(K[j],T[n-j]) \ \leq \  V(K[j],-T[n-j]).
    \]
    In particular, 
    \[ \vol(K+T) \ \le \ \vol(K-T). \] 
\end{thm}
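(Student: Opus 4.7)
The plan is to reduce the problem to the explicit formula for $V(K[j],-T[n-j])$ obtained from the dissection of the difference body. For anti-blocking $K, T \subseteq \RRn^n$ the body $K - T$ dissects as $\bigsqcup_{I \subseteq [n]} (K \cap \RR^I) \oplus (-(T \cap \RR^{[n]\setminus I}))$, where $\RR^I$ is the coordinate subspace supported on $I$. Writing $K|_I = K \cap \RR^I$, taking volumes, and matching the $\lambda^j$ coefficient of $\vol(\lambda K - T) = \sum_I \lambda^{|I|} \vol(K|_I)\vol(T|_{I^c})$ against Minkowski's expansion $\sum_j \binom{n}{j} \lambda^j V(K[j], -T[n-j])$ yields the closed form $V(K[j], -T[n-j]) = \binom{n}{j}^{-1} \sum_{|I|=j} \vol_j(K|_I) \vol_{n-j}(T|_{I^c})$. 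The claim thus reduces to the one-sided bound $\binom{n}{j} V(K[j], T[n-j]) \leq \sum_{|I|=j} \vol_j(K|_I) \vol_{n-j}(T|_{I^c})$.

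The base case $j = n-1$ admits a direct proof via Minkowski's first formula $n V(K[n-1], T) = \int_{S^{n-1}} h_T(u)\, dS_K(u)$. For anti-blocking $K$, the surface area measure $S_K$ is supported on $(\RRn^n \cap S^{n-1}) \cup \{-e_1, \dots, -e_n\}$, and $h_T(-e_i) = 0$ for $T \subseteq \RRn^n$, so only the positive orthant contributes. Combining with the box bound $h_T(u) \leq \sum_i u_i h_T(e_i)$ for $u \in \RRn^n$ (from $T \subseteq \prod_i [0, h_T(e_i)]$) and with the Cauchy identity $\int u_i\, dS_K(u) = \vol_{n-1}(K|_{[n]\setminus\{i\}})$ (valid because projection equals section for anti-blocking bodies) yields the inequality term by term, matching exactly the decomposition formula for $V(K[n-1], -T)$.

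For smaller $j$, I would proceed by induction on $n$, applying the same integral representation and $h_T$-bound to $n V(K[j], T[n-j]) = \int h_T(u)\, dS(K[j], T[n-j-1]; u)$ (the mixed surface area measure remains supported on the same set by polarization), and using Kubota's formula $V(K[j], T[n-j-1], [0, e_i]) = \frac{1}{n} V_{n-1}(K|_{[n]\setminus\{i\}}[j], T|_{[n]\setminus\{i\}}[n-j-1])$ to reduce to dimension $n-1$. Applying the inductive hypothesis together with the $(n-1)$-dimensional decomposition formula produces a double sum over pairs $(I, i)$ with $|I| = j$ and $i \in I^c$. The main obstacle is the combinatorial collapse of this double sum to the target single sum $\sum_{|I|=j} \vol(K|_I) \vol(T|_{I^c})$: the bound $h_T(u) \leq \sum_i u_i h_T(e_i)$ is loose in general (for instance, a simplex has $h_T(u) = \max_i u_i$), so closing the gap requires applying compatible inequalities recursively within each section $T|_{I^c}$ in its own coordinate subspace. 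Finally, the volume consequence $\vol(K+T) \leq \vol(K-T)$ is immediate from summing the term-by-term mixed volume inequalities weighted by $\binom{n}{j}$ via Minkowski's expansion $\vol(K \pm T) = \sum_j \binom{n}{j} V(K[j], \pm T[n-j])$.
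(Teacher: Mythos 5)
Your reduction to the claim $\binom{n}{j}V(K[j],T[n-j]) \le \sum_{|I|=j}\vol_j(K|_I)\vol_{n-j}(T|_{I^c})$ is a correct restatement, and your base case $j=n-1$ is correct and self-contained. But the inductive step has a genuine gap that cannot be closed as described. Carrying out your scheme, you would get $nV(K[j],T[n-j]) \le \sum_i h_T(e_i)\, V_{n-1}(K\cap e_i^\perp[j],\, T\cap e_i^\perp[n-j-1])$, and after invoking the inductive hypothesis and the decomposition formula in dimension $n-1$, and swapping the order of summation, the question becomes whether $\sum_{i\in I^c} h_T(e_i)\vol_{n-j-1}(T|_{I^c\setminus\{i\}}) \le (n-j)\vol_{n-j}(T|_{I^c})$. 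This inequality is actually false in the \emph{opposite} direction: since $T|_{I^c}$ is anti-blocking, $T|_{I^c}\subseteq [0,h_T(e_i)]\times T|_{I^c\setminus\{i\}}$, so each summand is $\ge \vol_{n-j}(T|_{I^c})$ and the left side is $\ge (n-j)\vol_{n-j}(T|_{I^c})$. The structural reason is exactly what you flag: the bound $h_T(u)\le\sum_i u_i h_T(e_i)$ replaces $T$ by the box $B=\prod_i[0,h_T(e_i)]$, and while $V(K[n-1],-T)=V(K[n-1],-B)$ (which is why $j=n-1$ closes), for $j<n-1$ the target $V(K[j],-T[n-j])$ involves the volumes $\vol(T|_{I^c})$ of higher-dimensional sections, and these are strictly smaller for $T$ than for $B$; the substitution loses precisely that information and the chain of inequalities never reconnects.

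The paper's proof is essentially different and avoids this loss. It observes that for anti-blocking $K, T$ and coordinate hyperplanes $H_i$, both $K$ and $-T$ satisfy $P_{H_i}(\cdot) = (\cdot)\cap H_i$, so each Steiner symmetrization $S_{H_i}$ is a shadow system step, whence $V(S_{H_i}K[j],S_{H_i}(-T)[n-j]) \le V(K[j],-T[n-j])$ (Lemma~\ref{lem:hyp_Steiner}). Iterating over all $n$ coordinate hyperplanes turns $K$ into $\tfrac12\widehat K$ and $-T$ into $\tfrac12\widehat T$ (the associated $1$-unconditional bodies scaled by $1/2$), and the identity $V(K[j],T[n-j]) = V(\tfrac12\widehat K[j],\tfrac12\widehat T[n-j])$ from Lemma~\ref{lem:locally-ab-sum-decomp} finishes the proof. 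Unlike the box bound, symmetrization is an honest convex-geometric operation that controls all the lower-dimensional projection volumes of $T$ at once, which is exactly what your inductive attempt sacrifices.
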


\subsection{Linear extensions}
In Section~\ref{sec:posets} we apply our geometric inequalities to the study
of linear extensions of partially ordered sets.  Sidorenko~\cite{Sidorenko}
and later Bollob\'as, Brightwell, Sidorenko~\cite{BBS} established a negative
correlation result for linear extensions of $2$-dimensional posets. We
generalize their results to $2$-dimensional double posets that gives a refined
(mixed) Sidorenko inequality for $2$-dimensional posets. 

\subsection{$C$-anti-blocking bodies}
An important ingredient in many of our proves is the following decomposition
for anti-blocking bodies $K, K' \subset \RRn^n$
\begin{equation}\label{eqn:can-ab}
    K - K' \ = \  \bigcup_{ E} P_E K  \times P_{E^\perp}(-K') \, 
\end{equation}
where $K$ ranges over all coordinate subspaces and $P_E$ and $P_{E^\perp}$ are
orthogonal projections onto $E$ and $E^\perp$, respectively. The geometry of
anti-blocking bodies is intimately tied to the convex cone $\RRn^n$.  In
Section~\ref{sec:C-ab}, we develop the basics of \Def{$C$-anti-blocking}
bodies, where $C$ is a general polyhedral and pointed cone and prove a
generalization of~\eqref{eqn:can-ab}.

\textbf{Acknowledgements.} The collaboration leading to this manuscript
started while the authors were in residence at the Mathematical Sciences
Research Institute in Berkeley, California, during the fall semester of 2017.
The first two authors participated in the program ``Geometric Functional
Analysis and Applications'' and the third named author in the program
``Geometric and Topological Combinatorics'', which were, luckily, carried out
in parallel.  The first and second named authors  received funding from the
European Research Council (ERC) under the European Union’s Horizon 2020
research and innovation programme (grant agreement No 770127).

\section{Anti-blocking and locally anti-blocking bodies}\label{sec:ab-bodies}

\subsection{Anti-blocking bodies}
Define a partial order on $\RR^n$ by 
\[
    x \leq y \quad {\rm if~and~only~ if} \quad x_i \ \leq \ y_i \  \text{ for all } i =
    1,\dots,n \, .
\]
A set $S \subseteq\RR^n$ is called \textbf{order-convex} if for any $y,x\in S$
and $z \in \R^n$ with $y \leq z \leq x$, we have $z \in S$. A convex body $K
\subset \RRn^n$ is called \textbf{anti-blocking} if $K$ is order-convex and $0
\in K$. Fulkerson~\cite{Fulkerson1971blocking, Fulkerson1972anti} studied
anti-blocking polytopes in the context of combinatorial optimization. The name
is derived from the notion of (anti-)blocking in hereditary set systems. These
polytopes have been discovered in various contexts and are sometimes called
\textbf{convex corners} (cf.  \cite{Bollobas-Reverse-Kleitman}). As explained
in the introduction, they are in one-to-one correspondence with
$1$-unconditional convex bodies.  Figure~\ref{fig:anti-blocking} shows some
three-dimensional examples.

Let $\Gnc$ denote the collection of coordinate subspaces in $\RR^n$. For $E
\in \Gnc$, we write $P_E : \RR^n \to E$ for the orthogonal projection onto
$E$.  Anti-blocking convex bodies can be equivalently defined as those convex
bodies $K \subseteq\RRn^n$ for which 
\[
    P_E K \ = \  K \cap E
\] 
holds for all $E \in \Gnc$.

\begin{prop}\label{prop:AB-ops}
    Let $K_1, K_2 \subseteq\RRn^n$ be anti-blocking bodies. Then $K_1 \cap
    K_2$, $K_1 \vee K_2$, and $K_1 + K_2$ are anti-blocking bodies.
\end{prop}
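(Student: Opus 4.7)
The plan is to verify each of the three closure claims in turn, checking in each case that (i) the result is a convex body in $\RRn^n$ containing the origin (all immediate from the analogous properties of $K_1,K_2$), and (ii) the order-convexity axiom characterizing anti-blocking, namely that $0 \le x \le y$ coordinate-wise with $y$ in the body forces $x$ to lie in the body as well.

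The intersection case is essentially a one-liner: if $y \in K_1 \cap K_2$ and $0 \le x \le y$, then by anti-blocking of each $K_i$ separately we have $x \in K_i$ for $i=1,2$, hence $x \in K_1 \cap K_2$.

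For both $K_1 \vee K_2$ and $K_1 + K_2$, my plan is to use the same ``coordinate-wise rescaling'' trick. Given $y$ in either body, write $y = \lambda_1 y_1 + \lambda_2 y_2$ with $y_i \in K_i$, where $\lambda_i \ge 0$ and $\lambda_1 + \lambda_2 = 1$ in the convex-hull case, and $\lambda_1 = \lambda_2 = 1$ in the Minkowski-sum case. Given $0 \le x \le y$, set $\mu_j := x_j/y_j$ when $y_j > 0$ and $\mu_j := 0$ otherwise; since the $y_i$ are coordinate-wise nonnegative and $x \le y$, every $\mu_j$ lies in $[0,1]$. Define $(x_i)_j := \mu_j (y_i)_j$, so that $x_i \in \RRn^n$ with $0 \le x_i \le y_i$; anti-blocking of $K_i$ then gives $x_i \in K_i$. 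A coordinate-wise computation yields $\lambda_1 x_1 + \lambda_2 x_2 = x$, placing $x$ in $K_1 \vee K_2$ or in $K_1 + K_2$, as required.

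The only small point of care is the handling of coordinates $j$ with $y_j = 0$: there $x_j$ is forced to be $0$ as well, so the choice $\mu_j = 0$ is consistent and nothing breaks. Beyond that the argument is purely algebraic and offers no serious obstacle; the substantive content is simply the proportional rescaling of the decomposition of $y$, which converts an anti-blocking-style pointwise domination into a matching decomposition of $x$.
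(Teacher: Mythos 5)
Your argument is correct, and it takes a genuinely different route from the paper's. The paper works with the projection characterization of anti-blocking bodies, namely that $K$ is anti-blocking iff $P_E K = K \cap E$ for every coordinate subspace $E$; it then verifies $P_E(K_1 + K_2) = P_E K_1 + P_E K_2 = (K_1 \cap E) + (K_2 \cap E) = (K_1 + K_2) \cap E$, with the last step relying on the fact that for $z_1, z_2 \in \RRn^n$ with $z_1 + z_2 \in E$, each $z_i$ must already lie in $E$ (coordinate-wise nonnegativity forces the complementary coordinates to vanish). Your proof instead works directly from the order-convexity definition and hinges on the coordinate-wise rescaling $\mu_j = x_j/y_j$, which pushes a domination $0 \le x \le y$ down through any given decomposition $y = \lambda_1 y_1 + \lambda_2 y_2$ to produce a matching decomposition of $x$. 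The rescaling trick is self-contained and does not presuppose the $P_E K = K \cap E$ characterization, which makes it slightly more elementary; the paper's version is shorter once that characterization is in hand and fits the projection-based toolkit used throughout the rest of the paper. Both treatments of the Minkowski sum and convex hull are uniform, and your handling of the $y_j = 0$ edge case (forcing $x_j = 0$) is exactly the needed check.
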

\begin{proof}
    We note
    \[
        P_E(K_1 + K_2) \ = \ P_E K_1 + P_E K_2 \ = \ (E \cap K_1) + (E \cap
        K_2) \ = \ E \cap (K_1 + K_2) \, ,
    \]
    where the last equality follows from the fact $K_1 \cap E$ and $K_2
    \cap E$ are faces.
    The claim
    for $K_1 \cap K_2$ is clear and the argument for $K_1 \vee K_2$ is
    similar. 
\end{proof}

\begin{figure}[h]
	\centering
	
	\begin{tikzpicture}[scale = 1.2]
	\draw[-] (0,0,0) -- (1.5,0,0) node[right] {$ $};
	\draw[-] (0,0,0) -- (0,1.5,0) node[above] {$ $};
	\draw[-] (0,0,0) -- (0,0,1.5) node[below left] {$ $};
	
	\shadedraw [color=white, fill=magenta!90,opacity=0.7] (0,0,1.5) -- (0,1.5,0) -- (1.5,0,0);
	
	\end{tikzpicture} \qquad
	\begin{tikzpicture}[scale = 1.2]
	\draw[-] (0,0,0) -- (1.5,0,0) node[right] {$ $};
	\draw[-] (0,0,0) -- (0,1.5,0) node[above] {$ $};
	\draw[-] (0,0,0) -- (0,0,1.5) node[below left] {$ $};
	
	\shadedraw [right color=magenta!20!white!77!gray, left color=magenta!5!gray, color = magenta!50!black, shading angle=0, opacity=0.9] (0,1.5,0)--(0,1.5,1.5)--(1.5,1.5,1.5)--(1.5,1.5,0)--(0,1.5,0);
	\shadedraw [right color=magenta!20!white!77!gray, left color=magenta!5!gray, color = magenta!50!black, shading angle=-90, opacity=0.9] (1.5,0,0) -- (1.5,0,1.5) -- (1.5,1.5,1.5) -- (1.5,1.5,0) -- (1.5,0,0);
	\draw [magenta!50!black, fill=white!75!magenta!,opacity=0.9] (0,0,1.5) -- (1.5,0,1.5) -- (1.5,1.5,1.5) -- (0,1.5,1.5) -- (0,0,1.5);
	\end{tikzpicture}\qquad
	\begin{tikzpicture}[scale = 1.2]
	\draw[-] (0,0,0) -- (1.5,0,0) node[right] {$ $};
	\draw[-] (0,0,0) -- (0,1.5,0) node[above] {$ $};
	\draw[-] (0,0,0) -- (0,0,1.5) node[below left] {$ $};
	
	\shade[ball color=magenta!40!gray!80!orange,opacity=0.7] (1.5,0) arc (0:90:1.5) {[x={(0,0,1)}] arc (90:0:1.5)} {[y={(0,0,1)}] arc (90:0:1.5)};
	\end{tikzpicture}
	\caption{Three examples of anti-blocking bodies in $\R^3$ -- a
    right-angled simplex, a cube and an intersection of the ball with the
    orthant, $B_2^3 \cap \RR_+^3$.}\label{fig:anti-blocking}
\end{figure}
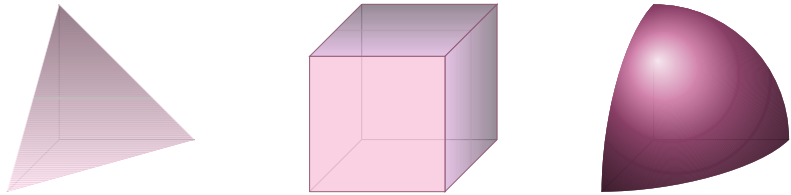

\subsection{Locally anti-blocking bodies}
Recall that for $\sigma \in \PM^n$ and $S \subseteq\RR^n$, we write $\sigma S =
\{ (\sigma_1x_1,\dots,\sigma_nx_n) : x \in S \}$. We write $\RR^n_\sigma :=
\sigma \RRn^n$ and we set $S_\sigma := S \cap \RR^n_\sigma$. For any set $S$
this gives a decomposition
\[
    S \ = \ \bigcup_\sigma S_\sigma \, ,
\]
where $S_\sigma \cap S_{\sigma'}$ is of measure zero whenever $\sigma \neq
\sigma'$.  We say that a convex body $K \subseteq\RR^n$ is \Def{locally
anti-blocking} if $\sigma K_\sigma$ is anti-blocking for every $\sigma \in
\PM^n$. Note that $K$ is $1$-unconditional if and only if  $K_\sigma = \sigma
K_+$ for all $\sigma$, where $K_+:= K\cap \RR^n_+$.

\begin{lem} \label{lem:locally-ab-sum-decomp}
    Let $K, K' \subseteq\RR^n$ be locally anti-blocking bodies. Then $K \cap K'$, $K
    \vee K'$, and $K + K'$ are locally anti-blocking as well.
\end{lem}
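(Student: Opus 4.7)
The plan is to reduce each of the three cases to Proposition~\ref{prop:AB-ops} by establishing, for every $\sigma \in \PM^n$ and every $\star \in \{\cap, \vee, +\}$, the orthant-level identity
\begin{equation*}
    (K \star K')_\sigma \ = \ K_\sigma \star K'_\sigma.
\end{equation*}
Once this is in hand, $\sigma(K \star K')_\sigma = \sigma K_\sigma \star \sigma K'_\sigma$ is anti-blocking by Proposition~\ref{prop:AB-ops}, so $K \star K'$ is locally anti-blocking. The inclusion $\supseteq$ is immediate: both $K_\sigma$ and $K'_\sigma$ lie in $\RR^n_\sigma$, and all three operations preserve $\RR^n_\sigma$. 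For $\star = \cap$ the reverse inclusion is trivial as well, so the real content is in $\vee$ and $+$.

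The main ingredient I would use is a truncation sub-lemma: for every $x \in K$ and every $\sigma \in \PM^n$ there exists $\pi_\sigma(x) \in K_\sigma$ with $\sigma_i \pi_\sigma(x)_i \ge \sigma_i x_i$ for all $i$. I would construct $\pi_\sigma(x)$ explicitly by picking any $\tau \in \PM^n$ with $x \in K_\tau$, setting $F = \{ i : \tau_i = \sigma_i \}$, and defining $\pi_\sigma(x)_i = x_i$ for $i \in F$ and $\pi_\sigma(x)_i = 0$ otherwise. Then $\pi_\sigma(x) \in \RR^n_\sigma$ by construction, and $0 \le \tau \pi_\sigma(x) \le \tau x$ componentwise; since $\tau K_\tau$ is anti-blocking and contains $\tau x$, this forces $\pi_\sigma(x) \in K_\tau \cap \RR^n_\sigma \subseteq K_\sigma$, while the sign inequalities are verified coordinatewise on $F$ and on its complement.

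With the sub-lemma in hand, the remaining inclusion for $\star \in \{\vee, +\}$ follows by a uniform argument. Given $p \in (K + K')_\sigma$ written as $p = x + y$ with $x \in K$, $y \in K'$, set $p' := \pi_\sigma(x) + \pi_\sigma(y) \in K_\sigma + K'_\sigma$. The body $\sigma(K_\sigma + K'_\sigma)$ is anti-blocking by Proposition~\ref{prop:AB-ops} and contains both $0$ and $\sigma p'$; since $0 \le \sigma_i p_i \le \sigma_i p'_i$ for all $i$, order-convexity places $p$ in $K_\sigma + K'_\sigma$. Replacing the Minkowski sum $x+y$ by the convex combination $\lambda x + (1-\lambda) y$ and $p'$ by $\lambda \pi_\sigma(x) + (1-\lambda)\pi_\sigma(y) \in K_\sigma \vee K'_\sigma$ handles the $\vee$ case verbatim. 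The main conceptual obstacle is the $+$ case, where an arbitrary decomposition $p = x+y$ need not put the summands in $\RR^n_\sigma$; the truncation sub-lemma is precisely the device that relocates $x$ and $y$ into the correct orthants, at the acceptable cost of moving $\sigma$-above~$p$, which is harmless thanks to the anti-blocking structure of $K_\sigma + K'_\sigma$.
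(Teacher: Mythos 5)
Your proof is correct, but it takes a genuinely different route from the paper's. The paper handles the Minkowski sum by explicitly constructing a pair $(x,x')$ lying in a single common orthant with $x + x' = y + y'$ exactly: it keeps the coordinates where $y_i$ and $y'_i$ agree in sign, and on the conflict set $I(y,y') = \{i : y_iy'_i < 0\}$ it sends the smaller (in absolute value) summand's coordinate to $0$ and collapses the sum into the other. The convex hull case is then reduced to the sum case by writing $K \vee K' = \bigcup_\lambda \lambda K + (1-\lambda) K'$.

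Your approach instead introduces the truncation map $\pi_\sigma$, which relocates a \emph{single} point into $K_\sigma$ at the cost of moving it $\sigma$-upward, and then leans on Proposition~\ref{prop:AB-ops} to absorb the discrepancy: since $\sigma(K_\sigma + K'_\sigma)$ is anti-blocking and $0 \le \sigma p \le \sigma p'$, the point $p$ is pulled in by order-convexity. This makes the argument uniform across $+$ and $\vee$ (and trivially $\cap$), whereas the paper's construction is tailored to the sum and preserves the decomposition of the point exactly. The trade-offs: the paper's argument is self-contained and constructive, giving an explicit decomposition $y + y' = x + x'$ with both pieces in one orthant; yours is shorter once the truncation sub-lemma is in place and makes explicit use of Proposition~\ref{prop:AB-ops}, but it only produces a dominating point $p'$ rather than a matched decomposition. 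Both are valid; there is no gap in your argument, and the truncation lemma is a clean reusable device.
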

\begin{proof}
    The statement for $K \cap K'$ is clear. To show that 
    \[
        K + K'  \ = \ \bigcup_\sigma K_\sigma + K'_\sigma \, 
    \]
    note that $K_\sigma + K'_\sigma \subseteq K + K'$ for all $\sigma\in
    \PM^n$. To show the other inclusion, pick $y \in K$ and $y' \in K'$.  We
    would like to find an orthant $\tau \in \PM^n$ and $x \in K_\tau$, $x\in
    K'_\tau$, with $x + x' = y + y'$. To this end, identify first
    $\sigma,\sigma' \in \PM^n$ such that $\sigma y, \sigma'y' \in \RRn^n$, and
    set $I(y,y') := \{ i : y_i y_i' < 0 \}$.
    We let $x_i = y_i, x'_i = y'_i$, for $i\not\in I(y,y')$. For $i\in
    I(y,y')$ we distinguish two cases, if $(\sigma y)_i> (\sigma' y')_i$, we
    let $x_i =  y_i+y'_i$ and $x'_i = 0$, and otherwise we let $x_i = 0$ and
    $x_i' = y_i+y'_i$. 
    
    Clearly the new vectors $x,x'$ satisfy that they are in a single orthant
    (as we have changed one of the coordinates to $0$ in the orthants where
    the original vectors differed) and, by construction, they satisfy $x + x'
    = y + y'$. Finally, since $K_+$ and $K'_+$ are locally anti-blocking, and
    we made sure that $0 \le \sigma x \le \sigma y$ and $0 \le \sigma' x' \le
    \sigma'y'$, we get that  $x\in K$ and $x'\in K'$, which proves the claim. 

    To prove the statement for the convex hull $K \vee K'$, note that
    \[
        K \vee K' \ = \ 
        \bigcup_{0 \le \lambda \le 1}
        \lambda K + (1 - \lambda) K'
        \ = \ 
        \bigcup_{0 \le \lambda \le 1}
        \bigcup_\sigma \lambda K_\sigma + (1 - \lambda) K'_\sigma
        \ = \ 
        \bigcup_\sigma K_\sigma \vee  K'_\sigma \, , 
    \]
    where the second equality follows from the statement about Minkowski sums.
\end{proof}

Recall that for a convex body $K \subseteq\RR^n$, its \Def{polar} is defined as
\[
    K\dual \ := \ \{ y \in \RR^n : \iprod{y}{x} \le 1 \text{ for all } x \in K
    \} \, .
\]
The polar is again a convex body if the origin belongs to the interior of $K$, otherwise it is an unbounded closed convex set. 
Polarity for anti-blocking bodies was defined in~\eqref{eqn:AK}. It can be
rewritten as $AK  =  (K - \RRn^n)\dual = K^\circ \cap \RRn^n$.  We extend the
definition to all orthants by setting $A_\sigma K := \sigma A(\sigma K)$ for
$\sigma \in \PM^n$.  The next result states that the class of locally
anti-blocking bodies is closed under taking polars.

\begin{lem}\label{lem:polarity-AB-decomposes}
    Let $K \subseteq \RR^n$ be a locally anti-blocking body.  Then $K\dual$ is
    locally anti-blocking with decompostion
    \[
        K\dual \ = \ \bigcup_{\sigma} A_\sigma K_\sigma \, .
    \]
\end{lem}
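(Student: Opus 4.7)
The plan is to prove the decomposition orthant by orthant: I will establish the equality $K\dual\cap \RR^n_\sigma = A_\sigma K_\sigma$ for each $\sigma\in \PM^n$, and local anti-blocking will then follow almost for free. Unpacking the definitions first, since $\sigma$ is a self-inverse diagonal sign-flip and $\sigma K_\sigma$ is anti-blocking,
$$A_\sigma K_\sigma \ = \ \sigma A(\sigma K_\sigma) \ = \ \{\, y\in \RR^n_\sigma : \langle y,x\rangle \le 1 \text{ for all } x\in K_\sigma\,\} \ = \ K_\sigma\dual \cap \RR^n_\sigma.$$
So the task reduces to showing $K\dual \cap \RR^n_\sigma = K_\sigma\dual\cap \RR^n_\sigma$, and the inclusion $\subseteq$ is immediate from $K_\sigma\subseteq K$.

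The reverse inclusion is where the actual work lies. Given $y\in K_\sigma\dual \cap \RR^n_\sigma$ and an arbitrary $x\in K$, I need $\langle y,x\rangle\le 1$. My strategy is to replace $x$ by a surrogate $x^\star\in K_\sigma$ that pairs at least as well with $y$. I will pick $\tau\in \PM^n$ so that $x\in K_\tau$, let $E\in \Gnc$ be the coordinate subspace spanned by $\{e_i : \sigma_i = \tau_i\}$, and define $x^\star := P_E x$. To see that $x^\star\in K$: since $\tau K_\tau$ is anti-blocking by hypothesis, the face characterization gives $P_E(\tau K_\tau) = \tau K_\tau \cap E$, and because $P_E$ and $\tau$ commute as diagonal operators, this will yield $x^\star \in K_\tau \subseteq K$. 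By construction $\sigma_i x^\star_i \ge 0$ for every $i$, so in fact $x^\star\in K_\sigma$. The comparison
$$\langle y,x\rangle - \langle y,x^\star\rangle \ = \ \sum_{i : \sigma_i\ne \tau_i} y_i x_i \ \le \ 0$$
will hold because at any index with $\sigma_i\ne\tau_i$ we have $\sigma_i y_i \ge 0$ while $\sigma_i x_i \le 0$ (as $\tau_i x_i\ge 0$). Hence $\langle y,x\rangle \le \langle y,x^\star\rangle \le 1$, giving $y\in K\dual$.

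Taking the union over $\sigma$ will yield the claimed decomposition, and local anti-blocking of $K\dual$ is then automatic from
$$\sigma (K\dual)_\sigma \ = \ \sigma A_\sigma K_\sigma \ = \ A(\sigma K_\sigma),$$
which is anti-blocking by definition of $A$. The main obstacle is to identify the correct surrogate $x^\star$: the right choice turns out to be the projection of $x$ onto the coordinates on which $\sigma$ and $\tau$ agree, after which the defining face property $P_E L = L\cap E$ of the anti-blocking body $L = \tau K_\tau$ delivers membership in $K$ essentially for free.
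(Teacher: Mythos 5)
Your proof is correct and follows essentially the same route as the paper's: both reduce to showing that for $y\in \RR^n_\sigma$ the maximum of $\iprod{y}{\cdot}$ over $K$ is attained on $K_\sigma$, and both achieve this by projecting $x\in K_\tau$ onto the coordinate subspace $E$ spanned by $\RR^n_\sigma\cap\RR^n_\tau$ and invoking $P_E(\tau K_\tau)=\tau K_\tau\cap E$ together with the sign argument $\iprod{y}{x}\le\iprod{y}{P_E x}$. Your write-up is just a more explicit version of the paper's two-line proof.
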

\begin{proof}
    For $\sigma, \sigma' \in \PM^n$, let $E$ be the coordinate subspace
    spanned by $\RR^n_\sigma \cap \RR^n_{\sigma'}$. It follows that for $y \in
    \R^n_\sigma$ and $x \in \R^n_{\sigma'}$ we have $\iprod{y}{x} \le
    \iprod{y}{P_E(x)}$. With the anti-blocking property for each
    $K_{\sigma'}$, this implies 
    \[
        \max_{x\in K} \, \iprod{y}{x} \ = \ \max_{x \in K_\sigma} \, \iprod{y}{x}  \
        .
    \]
    This shows that $y \in K\dual$ if and only if $y \in A_\sigma K_\sigma$,
    which proves the claim.
\end{proof}

\subsection{Decompositions}
The following  decompositions are the main tools for our results.  The first
one was shown in~\cite{Sanyal-ABbodies}.  Let $K, K' \subseteq \RRn^n$ be
anti-blocking bodies. Since anti-blocking implies locally anti-blocking,
Lemma~\ref{lem:locally-ab-sum-decomp} yields that
\[
    K - K' \ = \  K + (-K') \quad \text{ and } \quad
    K \vee (-K)
\]
are both locally anti-blocking. Let $\sigma \in \PM^n$ and let $E$ be the
coordiante subspace spanned by $\RRn^n \cap \RR^n_\sigma$. Then $K \cap
\RR^n_\sigma = P_E K $ and $(-K') \cap \RR^n_\sigma = P_{E^\perp}(-K)$
together with the decomposition given in Lemma~\ref{lem:locally-ab-sum-decomp}
yields the following fundamental decompositions.

\begin{lem}\label{lem:decomposition-sum}
    Let $K, K' \subseteq \RRn^n$ be anti-blocking convex bodies. Then
    \begin{align}\label{eq:vol-of-(K-tK)}
        K + (-K') \ &= \   \bigcup_{ E \in \Gnc} P_E K  \times
        P_{E^\perp}(-K')  \\ 
        K \vee (-K') \ &= \ \bigcup_{E\in \Gnc} P_E K  \vee P_{E^\perp}(-K')
        \, . 
	\end{align}
\end{lem}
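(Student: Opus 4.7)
The plan is to apply Lemma~\ref{lem:locally-ab-sum-decomp} directly, after verifying that the orthant-pieces of $K$ and $-K'$ have particularly simple descriptions in terms of coordinate projections.

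First I would note that anti-blocking bodies are in particular locally anti-blocking, so $K$ and $-K'$ are both locally anti-blocking bodies. Hence by Lemma~\ref{lem:locally-ab-sum-decomp},
\[
    K + (-K') \ = \ \bigcup_{\sigma \in \PM^n} K_\sigma + (-K')_\sigma
    \qquad \text{and} \qquad
    K \vee (-K') \ = \ \bigcup_{\sigma \in \PM^n} K_\sigma \vee (-K')_\sigma \, .
\]

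Next I would identify the summands. Fix $\sigma \in \PM^n$ and let $E \in \Gnc$ be the coordinate subspace spanned by $\{\mathbf{e}_i : \sigma_i = +1\}$, so that $E^\perp$ is spanned by $\{\mathbf{e}_i : \sigma_i = -1\}$. Since $K \subseteq \RRn^n$, a point of $K$ lies in $\RR^n_\sigma$ precisely when all coordinates indexed by $\{i : \sigma_i=-1\}$ vanish; that is, $K \cap \RR^n_\sigma = K \cap E$. Using the equivalent description of anti-blocking bodies by $K \cap E = P_E K$, we conclude $K_\sigma = P_E K$. An identical argument applied to the anti-blocking body $K'$ in the opposite orthant $-\RRn^n$ gives $(-K')_\sigma = (-K') \cap E^\perp = P_{E^\perp}(-K')$.

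Now $P_E K \subseteq E$ and $P_{E^\perp}(-K') \subseteq E^\perp$ live in orthogonal coordinate subspaces, so their Minkowski sum is a Cartesian product:
\[
    K_\sigma + (-K')_\sigma \ = \ P_E K + P_{E^\perp}(-K') \ = \ P_E K \times P_{E^\perp}(-K') \, .
\]
The same orthogonality makes the identity $K_\sigma \vee (-K')_\sigma = P_E K \vee P_{E^\perp}(-K')$ immediate. As $\sigma$ ranges over $\PM^n$, the associated subspace $E$ ranges over all of $\Gnc$ (and each $E$ is obtained exactly once), which yields both claimed decompositions.

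The argument is essentially bookkeeping, so there is no serious obstacle; the only point requiring a moment of care is the identification $K \cap \RR^n_\sigma = K \cap E = P_E K$, which follows immediately from $K \subseteq \RRn^n$ and the coordinate-projection characterization of anti-blocking bodies recalled in Section~\ref{sec:ab-bodies}.
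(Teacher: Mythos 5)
Your proof is correct and takes essentially the same route as the paper: both reduce to Lemma~\ref{lem:locally-ab-sum-decomp} and then identify $K_\sigma = K\cap E = P_E K$ (and similarly for $-K'$) via the coordinate-projection characterization of anti-blocking bodies. The only difference is cosmetic — you spell out the bijection $\sigma \leftrightarrow E$ and the product/orthogonality bookkeeping a bit more explicitly than the paper does.
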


One benefit of these decompositions is that we get the following simple
formulas for (mixed) volumes, the proofs of which are immediate by taking the
volume of $K\vee \lambda K'$ or $K - \lambda K'$ using the equations from the previous lemma, and comparing the coefficients of $\lambda^i$ on both sides.

\begin{cor}\label{cor:decomposition-conv}
    Let $K, K' \subseteq \RRn^n$ be anti-blocking bodies. Then
    \begin{align}\label{eq:jmixformula}
	    V(K[j], -K'[n-j]) \ &= \ {\binom{n}{j}}^{-1}\sum_{E \in \Gnjc}
        \vol_j(P_E K) \cdot \vol_{n-j} (P_{E^\perp}K') \\
	\vol(K \vee -K') \ &= \  \sum_{j=0}^n   V(K[n-j],-K'[j])
\label{eq:vol-j-conv(K,-t)} \, .
	\end{align}
\end{cor}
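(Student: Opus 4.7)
The plan is to extract both identities from Lemma~\ref{lem:decomposition-sum} by introducing a scaling parameter $\lambda \ge 0$, computing the volume of each piece of the decomposition, and matching against the polynomial expansion that defines mixed volumes. Replacing $K'$ by $\lambda K'$ throughout (which is again anti-blocking, with $P_{E^\perp}(-\lambda K') = \lambda P_{E^\perp}(-K')$), both decompositions from Lemma~\ref{lem:decomposition-sum} remain valid.

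For~\eqref{eq:jmixformula} I would first note that the pieces $P_E K \times \lambda P_{E^\perp}(-K')$ lie in distinct orthants — namely the one whose positive coordinates are exactly those of $E$ — and hence have pairwise disjoint interiors. Since each such piece is a Cartesian product in complementary coordinate subspaces, its $n$-volume factors as $\vol_j(P_E K) \cdot \lambda^{n-j} \vol_{n-j}(P_{E^\perp} K')$ with $j = \dim E$. Summing over $E \in \Gnc$ therefore writes $\vol_n(K - \lambda K')$ as a polynomial in $\lambda$ whose coefficient of $\lambda^{n-j}$ equals $\sum_{E \in \Gnjc} \vol_j(P_E K) \vol_{n-j}(P_{E^\perp} K')$. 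On the other hand, the defining expansion of mixed volumes gives $\vol_n(K + \lambda(-K')) = \sum_{j=0}^n \binom{n}{j} \lambda^{n-j} V(K[j], -K'[n-j])$, and equating the two polynomials coefficient by coefficient yields~\eqref{eq:jmixformula}.

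For~\eqref{eq:vol-j-conv(K,-t)} the strategy is identical, provided one knows the volume of the pieces $P_E K \vee \lambda P_{E^\perp}(-K')$. Each such piece is a convex hull of two convex bodies that live in complementary coordinate subspaces $E$ and $E^\perp$ and both contain the origin, i.e.\ a ``join''; a short computation (fix a coordinate $y$ in $E^\perp$, observe that the slice equals $(1 - \mu(y)) P_E K$ where $\mu$ is the Minkowski gauge of $\lambda P_{E^\perp}(-K')$, and evaluate the resulting radial integral via the Beta function $B(j+1, n-j)$) gives
\[
\vol_n\bigl(P_E K \vee \lambda P_{E^\perp}(-K')\bigr) \ = \ \binom{n}{j}^{-1} \vol_j(P_E K) \cdot \lambda^{n-j} \vol_{n-j}(P_{E^\perp} K').
\]
Again the pieces have disjoint interiors, so summing over $E$ and using~\eqref{eq:jmixformula} to recognize the inner sum as $\binom{n}{j} V(K[j], -K'[n-j])$ collapses the result to $\sum_{j=0}^n \lambda^{n-j} V(K[j], -K'[n-j])$; setting $\lambda = 1$ and relabeling $j \mapsto n-j$ yields~\eqref{eq:vol-j-conv(K,-t)}. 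I expect the only non-routine step to be the join-volume identity, but this is a standard exercise that follows from the star-shaped structure at the origin and the Beta integral.
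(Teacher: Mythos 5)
Your proposal is correct and matches the paper's approach exactly: the paper's own (one-line) proof says precisely to take volumes of the decompositions of Lemma~\ref{lem:decomposition-sum} applied to $K - \lambda K'$ and $K \vee (-\lambda K')$ and compare coefficients of $\lambda$, which is what you do in detail. The only thing you supply that the paper leaves implicit is the join-volume formula $\vol_n(A \vee B) = \binom{n}{j}^{-1}\vol_j(A)\vol_{n-j}(B)$ for $A \subseteq E$, $B \subseteq E^\perp$ containing the origin, and your Beta-integral derivation of it is standard and correct (the paper invokes the same fact, without proof, inside the proof of Lemma~\ref{thm:rogers-shephard-sections-projections}).
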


\section{Around Godbersen's conjecture}\label{sec:God}

The proof of Theorem \ref{thm:god-for-closed-down} is immediate from the formula for the mixed volume of two anti-bocking bodies, joined with the following Lemma proved in~\cite{RS-sections-projections} by Rogers and
Shephard.

\begin{lem}[Rogers--Shephard Lemma for Sections and Projections]\label{thm:rogers-shephard-sections-projections}
        Let $K\subseteq\RR^n$ be a convex body, and let $E \subseteq \RR^n $ be
        a $j$-dimensional subspace. Then 
		\begin{equation}\label{eq:RSlemm}
		\vol_{j}\left(K\cap E\right) \cdot \vol_{n-j}\left(P_{E^\perp}K\right)
        \ \le \ 	\binom{n}{j}\vol_{n}\left(K\right).
		\end{equation}
		Moreover, equality holds if and only if for every direction $v\in E^\perp$, the intersection of
		$K$ with $E+\R_+v$ is obtained by taking the convex hull of $K\cap E$ and one more point.
\end{lem}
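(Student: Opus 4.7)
The plan is to reduce \eqref{eq:RSlemm} to an integral inequality for nonnegative concave functions, via slicing and the Brunn--Minkowski inequality. We may assume $\vol_j(K\cap E)>0$, as otherwise the inequality is trivial. Translating $K$ by an element of $E$ leaves all three quantities in \eqref{eq:RSlemm} invariant, so we may further assume $0\in K\cap E$. Set $D:=P_{E^\perp}K\subseteq E^\perp$, and for $z\in D$ let $K(z):=\{y\in E: y+z\in K\}$, a convex set in $E$. Fubini gives $\vol_n(K)=\int_D\vol_j(K(z))\,dz$, while $K\cap E=K(0)$. By the Brunn--Minkowski inequality applied to the slices of $K$ orthogonal to $E^\perp$, the function $f(z):=\vol_j(K(z))^{1/j}$ is a nonnegative concave function on $D$.

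With this setup, it suffices to prove the following claim: for every convex body $D\subseteq\R^{n-j}$ with $0\in D$ and every nonnegative concave $f:D\to\R_{\ge 0}$,
\begin{equation*}
\int_D f(z)^j\,dz \ \ge \ \binom{n}{j}^{-1}f(0)^j\,\vol_{n-j}(D).
\end{equation*}
To prove the claim, I would introduce polar coordinates around $0$: write $z=r\omega$ with $\omega\in S^{n-j-1}$ and $r\in[0,R(\omega)]$, where $R(\omega)$ is the distance from $0$ to $\partial D$ in direction $\omega$. Concavity of $f$ along the segment from $0$ to $R(\omega)\omega$, combined with $f\ge 0$ at the endpoint, yields the pointwise ``tent'' lower bound $f(r\omega)\ge (1-r/R(\omega))\,f(0)$. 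Integrating this bound in polar coordinates ($dz=r^{n-j-1}\,dr\,d\omega$), substituting $s=r/R(\omega)$, and using the Beta identity $\int_0^1 s^{n-j-1}(1-s)^j\,ds=\frac{(n-j-1)!\,j!}{n!}$ together with $\vol_{n-j}(D)=\frac{1}{n-j}\int_{S^{n-j-1}}R(\omega)^{n-j}\,d\omega$, the constants collapse to $\binom{n}{j}^{-1}$, proving the claim. Plugging $f(z)=\vol_j(K(z))^{1/j}$ back in recovers \eqref{eq:RSlemm}.

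For the equality characterization, equality forces the tent majorization to be tight, i.e.\ $f(r\omega)=(1-r/R(\omega))\,f(0)$ along almost every ray. The equality case of Brunn--Minkowski on each such ray then implies that the slice $K(r\omega)$ is a translate of the homothetic copy $(1-r/R(\omega))\,K(0)$, and the translation vector is linear in $r$. In particular, as $r\uparrow R(\omega)$ the slice shrinks to a single point $p_\omega$, and therefore $K\cap(E+\R_+\omega)$ coincides with the convex hull of $K\cap E$ and $p_\omega$, matching the stated equality condition.

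The main technical hurdle will be the equality analysis, where one must carefully invoke the equality case of Brunn--Minkowski along each ray and link it to the tent majorization to force the cone-like structure of $K$ over $K\cap E$. The inequality itself reduces to a direct polar-coordinate computation once the reduction to the concave-function claim is in place.
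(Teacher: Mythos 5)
Your proof is correct, and it takes a somewhat different route from the paper. The paper first performs a Schwarz symmetrization of $K$ with respect to $E$ (turning the projection $P_{E^\perp}K$ into the section $\widehat{K}\cap E^\perp$, with volume preserved by Fubini and convexity preserved by Brunn's principle) and then invokes the identity $\vol\bigl((\widehat{K}\cap E)\vee(\widehat{K}\cap E^\perp)\bigr)=\binom{n}{j}^{-1}\vol_j(\widehat{K}\cap E)\,\vol_{n-j}(\widehat{K}\cap E^\perp)$ together with the inclusion of this convex hull in $\widehat{K}$. You avoid symmetrization entirely: you slice along $E^\perp$, use Brunn--Minkowski to make $f(z)=\vol_j(K(z))^{1/j}$ concave on $D=P_{E^\perp}K$, and prove the resulting concave-function inequality by the radial ``tent'' bound; your Beta-integral computation is, in effect, the same calculation that underlies the paper's convex-hull volume identity, so both proofs ultimately rest on Brunn's concavity principle, but yours is more self-contained and yields the lemma's stated ray-wise equality condition directly (each $K\cap(E+\R_+v)$ is a cone over $K\cap E$), whereas the paper's route produces the equality condition on the symmetrized body --- a formulation the authors exploit later, since for anti-blocking $K$ and coordinate $E$ the symmetrization is redundant and equality becomes $K=(K\cap E)\vee(K\cap E^\perp)$. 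Three small points to nail down in your equality analysis: (i) equality in the integrals gives the tent identity only for almost every ray, so you need a continuity/convexity argument (or the global reconstruction of $K$ from a.e.\ rays) to get it for \emph{every} direction $v\in E^\perp$; (ii) to see that the top slice is a single point, use that $\vol_j(K\cap E)>0$ and that Minkowski-adding a nondegenerate segment to a $j$-dimensional body strictly increases $j$-volume, so a segment in the top slice would violate the tent identity; and (iii) the ``if'' direction of the characterization should be recorded, though in your framework it is immediate because the cone structure makes the tent bound an identity on every ray.
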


There are many different proofs for Lemma \ref{thm:rogers-shephard-sections-projections}. Let us include a simple one which will be useful for us to observe. 

\begin{proof}
Given a convex body $K$, and a $j$-dimensional subspace $E$, use Schwartz symmetrization with respect to $E$ to replace, for any $x\in E^\perp$ the section $(x+E)\cap K$ with a Euclidean ball in $E$ of the same volume centered at $x$. The new body, $\widehat{K}$, has the same volume as $K$ by Fubini's theorem, and it satisfies $\vol(K\cap E) = \vol(\widehat{K}\cap E)$ and $\vol(P_{E^\perp}(K)) = \vol(\widehat{K}\cap E^\perp)$. By Brunn's concavity principle, $\widehat{K}$ is convex.

Using the fact that $(\widehat{K}\cap E) \vee (\widehat{K}\cap E^\perp)
\subseteq \widehat{K}$, and that the volume of a convex hull of two convex sets
$A\subseteq E,B\subseteq E^\perp$ which includes the origin is simply $\vol(A)
\vol(B) /\binom{n}{j}$, we end up with 
\[ \vol(K) = \vol(\widehat{K}) \ge \frac{1}{\binom{n}{j}} \vol(\widehat{K}\cap E) \vol(\widehat{K}\cap E^\perp) =\frac{1}{\binom{n}{j}} \vol( \widehat{K}\cap E) \vol(P_{E^\perp} \widehat{K} ). \] 

The equality case is precisely when $\widehat{K} = (\widehat{K} \cap E) \vee
(\widehat{K} \cap E^\perp)$.
\end{proof}

\begin{rem}\label{rem:equality-RS-antiB}
    Inspecting the proof we see that when $K$ is anti-blocking and $E$ is a
    coordiante subspace, the first step of Schwartz symmetrization is
    redundant, namely we may use $\widehat{K}=K$ in the proof. This implies
    that the equality case in the Rogers--Shephard Lemma, when $K$ is
    anti-blocking, is precisely when  $ {K} = (K \cap E) \vee ( K \cap
    E^\perp )$.
\end{rem}

The following result, joint with the aforementioned equality condition, 
 will yield the equality condition for Godbersen's
conjecture in the case of anti-blocking bodies.
\begin{prop}\label{prop:RS-AB-equality}
    Let $K \subseteq \RRn^n$ be a full-dimensional anti-blocking body  and $1
    \le j < n$. If $K = (K \cap E) \vee (K \cap E^\perp)$ for every
    $j$-dimensional coordinate subspace $E$, then $K$ is a simplex.
\end{prop}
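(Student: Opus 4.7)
The plan is to characterize the extreme points of $K$ under the hypothesis and show there are exactly $n+1$ of them, all lying on coordinate rays, so that full-dimensionality forces $K$ to be the simplex $\conv\{0, a_1 e_1, \dots, a_n e_n\}$ with $a_i := \max\{t \ge 0 : t e_i \in K\} > 0$.

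First I would use the standard fact that the extreme points of a convex hull $A \vee B$ are contained in $A \cup B$. Applied to the hypothesis $K = (K \cap E) \vee (K \cap E^\perp)$ this yields $\operatorname{ext}(K) \subseteq E \cup E^\perp$, and since this inclusion holds \emph{simultaneously} for every $j$-dimensional coordinate subspace $E$, every extreme point $x$ of $K$ must satisfy the following combinatorial condition: for each $j$-subset $I \subseteq \{1,\dots,n\}$, either $\operatorname{supp}(x) \subseteq I$ or $\operatorname{supp}(x) \subseteq \{1,\dots,n\} \setminus I$.

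I would then show this condition forces $|\operatorname{supp}(x)| \le 1$. Suppose two distinct indices $k, \ell$ both lay in $\operatorname{supp}(x)$. Since $1 \le j \le n-1$, we have $\binom{n-2}{j-1} \ge 1$, so one can choose a $j$-subset $I$ containing $k$ but not $\ell$; then neither $\operatorname{supp}(x) \subseteq I$ nor $\operatorname{supp}(x) \subseteq \{1,\dots,n\} \setminus I$ holds, a contradiction. Hence every extreme point of $K$ lies on a coordinate ray or at the origin. To finish, full-dimensional anti-blocking gives $K \cap \RR_+ e_i = [0, a_i e_i]$ with $a_i > 0$, and interior points of this segment are nontrivial convex combinations of its endpoints and thus not extreme in $K$; the origin is extreme because $0 = \tfrac12(x+y)$ with $x, y \in K \subseteq \RRn^n$ forces $x = -y \in \RRn^n \cap (-\RRn^n) = \{0\}$. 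Therefore $\operatorname{ext}(K) = \{0, a_1 e_1, \dots, a_n e_n\}$, an affinely independent set of $n+1$ points, and being a full-dimensional compact convex body $K$ equals their convex hull, i.e., a simplex.

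The main point requiring care is the combinatorial step pinning $|\operatorname{supp}(x)| \le 1$; it is essential that the range $1 \le j \le n-1$ allows us to separate any two indices by a $j$-subset, and this is precisely where the hypothesis on $j$ enters the argument in a nontrivial way. Once extreme points are confined to the coordinate axes, the remainder of the argument is a direct consequence of full-dimensionality and the anti-blocking property.
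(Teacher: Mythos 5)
Your proof is correct and follows essentially the same route as the paper's: observe that $\operatorname{ext}(K) \subseteq E \cup E^\perp$ for every coordinate $j$-subspace $E$, then for an extreme point with two indices $r,s$ in its support, separate them by a $j$-dimensional coordinate subspace containing $e_r$ but not $e_s$ (possible precisely because $1 \le j \le n-1$) to reach a contradiction. You spell out more explicitly than the paper does why the extreme point set is exactly $\{0, a_1 e_1, \dots, a_n e_n\}$ and why this yields a simplex, but the core combinatorial step is identical.
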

\begin{proof}
    It suffices to show that every extreme point of $K$ is of the form $\mu_i
    e_i$ for $i = 1,\dots,n$ and $\mu \ge 0$. By assumption, the extreme
    points of $K$ are contained in $E \cup E^\perp$ for every $E$. Assume that
    $p \in K$ is an extreme point such that $p_r > 0$ and $p_s > 0$ for some
    $1 \le r < s \le n$.  Now choose $E$ such that $e_r \in E$ and $e_s
    \not\in E$. Then $p \not\in E \cup E^\perp$, which proves the claim.
\end{proof}

We are now in the position to 
prove Godbersen's conjecture
 for anti-blocking bodies.

\begin{proof}[Proof of Theorem~\ref{thm:god-for-closed-down}]
    By equation (\ref{eq:jmixformula}),  and by the fact that $K$ is
    anti-blocking, we infer
	\begin{eqnarray*}
		V(K[j], -K[n-j])
        &=& {{n} \choose {j}}^{-1}\sum_{ E \in \Gnjc } \vol_j(P_{E} K) \cdot
        \vol_{n-j} (P_{E^\perp}(K))\\
		& = & {{n} \choose {j}}^{-1}\sum_{ E \in \Gnjc } \vol_j( K\cap E) \cdot \vol_{n-j} (P_{E^\perp}(K)),
	\end{eqnarray*}
	and by Lemma \ref{thm:rogers-shephard-sections-projections} we get  that 
	\begin{eqnarray}\label{eq:inequality-in-proof-of-godbersen}
	V(K[j], -K[n-j])
	&\le &  {\binom{n}{j}}^{-1}\sum_{ E \in \Gnjc } \binom{n}{j} \vol(K) \ = \ \binom{n}{j} \vol(K). 
	\end{eqnarray}
    Inspecting the equality condition, we see that $V(K[j], -K[n-j]) = \binom{n}{j} \vol(K)$ holds if and only if $
    \vol_{j}(K\cap E) \cdot \vol_{n-j}(P_{E^\perp}K) =
    \binom{n}{j}\vol_{n}(K)$ for all $j$-dimensional coordinate subspaces $E$.
    The equality condition in Lemma \ref{thm:rogers-shephard-sections-projections}, explained for anti-blocking bodies in Remark \ref{rem:equality-RS-antiB}, joint with Proposition~\ref{prop:RS-AB-equality} implies that this happens if and only
    if $K$ is a simplex.
\end{proof}

To complete this section, we would like  to further investigate  the quantity
\[
    \frac{\vol(K)}{V(K[j],-K[n-j])}   
\]
and show that it is a `good' measure of symmetry for anti-blocking bodies  in
the sense of Gr\"unbaum \cite{Grunbaum}. To this end we need to explain not
only that it is minimal for simplices, but that it is at most $1$, and that is
it maximal, namely equal to $1$, only if the body is a translate of a
centrally-symmetric body. While the former is covered by
Theorem~\ref{thm:god-for-closed-down}, the latter is immediate from the
equality condition in one of the simpler instances of the Alexandrov-Fenchel
inequality (see \cite[Page 398]{Schneider-book-NEW} and also \cite[Theorem
3.1]{Alesker-Dar-Milman}, where the equality case is discussed in Remark 1,
page 207), namely 
\[ 
    \prod_{i=1}^n \vol(K_i)^{1/n} \ \le \ V(K_1, \ldots, K_n),
\] 
with equality if and only if the bodies $K_i$ (assumed to be full dimensional) are pair-wise homothetic. 

Within the class of anti-blocking bodies, however, the variety of translates
of centrally-symmetric bodies is quite limited, and in Proposition
\ref{thm:cube-is-only-sym-AB} we show that in the case of anti-blocking
bodies, we get equality only for axes-parallel boxes, as these are the only
centrally-symmetric (up to translation) anti-blocking bodies.

\begin{prop}\label{thm:cube-is-only-sym-AB}
    The only anti-blocking bodies which have a center of symmetry are
    axes-parallel boxes, i.e., for $K\subseteq\RR_+^n$ anti-blocking,
	\[
        \exists x\in K\ \text{s.t. }K-x=x-K\ \iff\ 
        K = [0,a_1] \times [0,a_2] \times \cdots \times [0,a_n]
    \]
    for some $a_1,\dots,a_n \ge 0$.
\end{prop}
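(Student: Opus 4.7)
The plan is to use the central symmetry of $K$ about $x$, together with the order-convexity of the anti-blocking property and the constraint $K\subseteq \RR_+^n$, to pin down both the top vertex of the box and the containment in both directions.

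First, I note that $0\in K$ by the definition of anti-blocking, and that if $x\in K$ is a center of symmetry then $K-x=-(K-x)$, so applying this to $0\in K$ gives $2x=2x-0\in K$. Since $K\subseteq \RR_+^n$, this forces $x\in\RR_+^n$. Set $a:=2x\in K$; then $a\ge 0$ coordinatewise.

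Next, the anti-blocking property applied at $y=a$ shows that every point $z$ with $0\le z\le a$ lies in $K$, so the box $[0,a_1]\times\cdots\times[0,a_n]\subseteq K$. For the reverse inclusion, pick an arbitrary $y\in K$. Central symmetry about $x$ yields $a-y=2x-y\in K\subseteq \RR_+^n$, whence $y\le a$ coordinatewise; together with $y\ge 0$ from $K\subseteq \RR_+^n$, this gives $y\in[0,a_1]\times\cdots\times[0,a_n]$. Hence $K=[0,a_1]\times\cdots\times[0,a_n]$.

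The converse direction is immediate: any such box is anti-blocking (it contains $0$ and is order-convex) and has center of symmetry $x=\tfrac{1}{2}a$. The only place where one has to be slightly careful is the step that uses $K\subseteq \RR_+^n$ twice --- once to deduce $x\ge 0$ from $2x\in K$, and once to deduce $y\le a$ from $a-y\in K$ --- but neither is an obstacle; the argument is short and essentially forced by the combination of the two structural hypotheses.
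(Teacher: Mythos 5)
Your proof is correct and follows essentially the same route as the paper's: show $2x\in K$, use the containment $K\subseteq\RR_+^n\cap(2x-\RR_+^n)$ coming from central symmetry to get $K$ inside the box, and use the anti-blocking property at the vertex $2x$ to get the box inside $K$. The only difference is that you spell out the intermediate steps slightly more explicitly than the paper does.
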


\begin{proof}
    The right-hand side of the equivalence has a center of symmetry and we
    only have to show the other direction.  If $K$ has a center of symmetry,
    then $K = 2x - K$.  Since $K \subseteq \RRn^n$, it follows that $K
    \subseteq \RRn^n \cap (2x - \RRn^n) = \prod_{i=1}^n [0,2x_i] =: B$. On the
    other hand, $2x \in K$ and the anti-blocking property implies that $B
    \subseteq K$. Hence $K = B$ is an axes-parallel box.
\end{proof}

\begin{rem}\rm 
	We mention that in our case, where we compute the mixed volume of two anti-blocking bodies ($K$ and $-K$), we do not need the full strength of Alexandrov's inequality and its equality case, and we can use the following simple argument, which illustrates once more that the main attribute of convex anti-blocking bodies, namely that 
    \begin{equation}\label{eqn:crucial}
    (K\cap E) \vee (K \cap E^\perp) \ \subseteq \  K \ \subseteq \  
    (K\cap E) \times (K\cap E^\perp)
\end{equation} 
is the a crucial concept we use. Indeed, as $K   \subseteq   P_EK \times
P_{E^\perp}K$, we may use Corollary \ref{cor:decomposition-conv}
Equation~\eqref{eq:jmixformula} which yields 
\begin{equation}\label{eqn:lower}   
    V(K[j], -K [n-j]) =  {\binom{n}{j}}^{-1}\sum_{E \in \Gnjc}
        \vol_j(P_E K) \cdot \vol_{n-j} (P_{E^\perp}K ) \ge \vol(K)
\end{equation}
            with equality if and only if $K = P_EK \times P_{E^\perp}K$ for all
    coordinate subspaces $E$. In a statement dual to
    Proposition~\ref{prop:RS-AB-equality}, this can happen only if $K$ is an
    axes-parallel box. More formally, in this case for any $E\in \Gnc$ we have 
    \[
        AK  = A(P_EK \times P_{E^\perp}K)  =  
        A(P_EK) \vee A(P_{E^\perp}K)  =
        P_E(AK) \vee P_{E^\perp}(AK)  = 
        (AK \cap E) \vee (E^\perp \cap AK) \, .
    \]
    By Proposition~\ref{prop:RS-AB-equality}, this is the case if and only if
    $AK$ is a simplex which is equivalent to $K$ being an axes-parallel box.
\end{rem}

\section{Near Mahler's conjecture}\label{sec:arnoundMahler}
Mahler's conjecture for centrally-symmetric bodies states that for any
centrally-symmetric convex body $K \subset \RR^n$ one has
$\vol(K)\vol(K^\circ) \ge 4^n /n!$ and with equality attained for the cube,
the cross-polytope, and also for all Hanner polytopes \cite{Hanner}. The
inequality has been proven for 1-unconditional bodies and is known as
Saint-Raymond's inequality. We prove the inequality for locally anti-blocking
bodies, along with related inequalities, in Subsection
\ref{subsec:Saint-Raymond}. In  Subsection \ref{subsec:C-bodies} we discuss
another special class of convex bodies, which includes the cube and the cross
polytope, for which we provide a bound on the Mahler volume which is close to
the conjectured bound.

\subsection{A mixed version of Saint-Raymond's inequality}
\label{subsec:Saint-Raymond}

For the proof of Mahler's conjecture for $1$-unconditional bodies,
Saint-Raymond proved the following beautiful inequality for anti-blocking
bodies. In this section, we prove an extension of this inequality to mixed
volumes.

The equality cases where determined by Meyer~\cite{Meyer-unconditional} and
Reisner~\cite{Reisner-unconditional}. An anti-blocking polytope $P \subset
\R^n$ is a \Def{reduced Hanner polytope} if $P = [0,1] \subseteq\R$ or there is
a coordinate subspace $E$ such that $P \cap E$ and $P \cap E^\perp$ are
reduced Hanner and
\[
    P \ = \ (P \cap E) \times (P \cap E^\perp) \quad \text{ or } \quad
    P \ = \ (P \cap E) \vee (P \cap E^\perp) \, .
\]
Equivalently, we could say that the unconditional body $\bigcup_\sigma \sigma P$ is a Hanner polytope; see \cite[Section 10.7]{Schneider-book-NEW}.

\begin{thm}[Saint-Raymond~\cite{Saint-Raymond}] \label{thm:saint-raymond-ab}
	Let $K \subseteq\RRn^n$ be a full-dimensional anti-blocking body. Then
	\[
	    \vol(K) \cdot \vol(A(K)) \ \ge \ \frac{1}{n!} \, .
	\]
    Equality is attained precisely for the reduced Hanner polytopes.
\end{thm}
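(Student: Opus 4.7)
The plan is to prove this by induction on the dimension $n$, following Saint-Raymond's strategy but phrased using the anti-blocking decomposition machinery developed in Section~\ref{sec:ab-bodies}. The base case $n=1$ is direct: every full-dimensional anti-blocking body in $\RR_+$ is an interval $K = [0,a]$, hence $AK = [0, 1/a]$, and $\vol(K)\vol(AK) = 1 = 1/1!$, attained by $K=[0,1]$, the only reduced Hanner polytope in dimension one.

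For the inductive step, fix a coordinate direction, say $e_n$, and let $a := \max\{x_n : x \in K\}$. For $t \in [0,a]$ define the horizontal section $K_t := \{x' \in \RR_+^{n-1} : (x',t) \in K\}$. Each $K_t$ is an anti-blocking body in $\RR_+^{n-1}$, and the order-convexity of $K$ gives the nesting $K_s \subseteq K_t$ whenever $t \le s$. Fubini yields $\vol(K) = \int_0^a \vol_{n-1}(K_t)\,dt$. Dually, for $s \in [0, 1/a]$ let $L_s := \{y' \in \RR_+^{n-1} : (y',s) \in AK\}$; unwrapping the definition of $AK$ gives
\[
    L_s \ = \ \bigcap_{t \in [0,a]} (1 - st)\cdot A(K_t),
\]
and $\vol(AK) = \int_0^{1/a} \vol_{n-1}(L_s)\,ds$. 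The inductive hypothesis applied section-wise provides the pointwise bound $\vol_{n-1}(K_t)\cdot \vol_{n-1}(A(K_t)) \ge 1/(n-1)!$ for each $t$.

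The main obstacle, and the technical heart of the argument, is upgrading this section-wise inequality to the required integral estimate $\int_0^a \vol_{n-1}(K_t)\,dt \cdot \int_0^{1/a} \vol_{n-1}(L_s)\,ds \ge 1/n!$. The naive lower bound $L_s \supseteq (1-sa)\cdot A(K_0)$, obtained by taking only the $t = a$ constraint, loses too much. The correct strategy is to exploit the joint monotonicity: $t \mapsto \vol_{n-1}(K_t)$ is non-increasing while $t \mapsto \vol_{n-1}(A(K_t))$ is non-decreasing, with their product bounded below by $1/(n-1)!$ pointwise. A change of variables coupling $s$ to $t$ (e.g.\ $t = a u$, $s = (1-u)/a$ for $u \in [0,1]$), combined with the pointwise estimate $\vol_{n-1}(L_s) \ge (1-st)^{n-1}\vol_{n-1}(A(K_t))$ used at an optimally chosen $t = t(s)$, produces the factor $\int_0^1 u^{n-1}\,du = 1/n$ that converts $1/(n-1)!$ into $1/n!$.

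For the equality case, one tracks equalities through the induction. At each step, equality forces each section $K_t$ to achieve equality in the $(n-1)$-dimensional statement (so each is a reduced Hanner polytope) and forces the fibration over $[0,a]$ to be either a product (when the sections are constant in $t$) or a convex hull with a single apex (when one of the endpoint sections is a point), which are precisely the two inductive ways of building a reduced Hanner polytope.
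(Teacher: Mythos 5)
Note first that the paper does not prove this theorem; it is cited from Saint-Raymond~\cite{Saint-Raymond} (the inequality) and Meyer~\cite{Meyer-unconditional}/Reisner~\cite{Reisner-unconditional} (the equality case), so there is no in-paper proof to compare against.

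Your proposal has a genuine gap at the step you yourself flag as ``the technical heart.'' From your identity $L_s = \bigcap_{t\in[0,a]}(1-st)A(K_t)$, the set $L_s$ is \emph{contained} in $(1-st)A(K_t)$ for every $t$, so the only inequality available at a fixed $t$ is $\vol_{n-1}(L_s) \le (1-st)^{n-1}\vol_{n-1}(A(K_t))$ --- the opposite of what you assert. No ``optimally chosen $t=t(s)$'' reverses the inclusion: for the cube $K=[0,1]^n$ the intersection is achieved at $t=a$, for the simplex at $t=0$, and in general at no single $t$, since $(1-st)$ decreases in $t$ while $A(K_t)$ increases and the family $(1-st)A(K_t)$ is therefore not nested. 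The only lower bound your decomposition readily yields is $(1-sa)A(K_0)\subseteq L_s$, which you correctly discard as too weak (and note that ``taking only the $t=a$ constraint'' produces the \emph{superset} $(1-sa)A(K_a)$, not a subset, since dropping constraints can only enlarge an intersection). Consequently the coupling $t=au$, $s=(1-u)/a$ and the claimed factor $\int_0^1 u^{n-1}\,du = 1/n$ are asserted but not derived, and cannot be derived from the inequality you wrote. The inductive framework is fine in outline --- Saint-Raymond's original argument is indeed an induction over coordinate directions --- but the inductive step needs a lower bound on $\vol(AK)$ that does not pass through the intersection representation of the slices $L_s$, and as written your argument does not supply one.
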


The proof of Mahler's conjecture for a $1$-unconditional body $K \subseteq\R^n$
now follows from the fact that $\vol(K) = 2^n \vol(K \cap \RRn^n)$ and $K\dual
\cap \RRn^n = A(K \cap \RRn^n)$. With only a little more effort, we can show
that locally anti-blocking bodies also satisfy Mahler's conjecture. The
functional version of the following result was observed by 
Fradelizi--Meyer~\cite[Remark on p.~1451]{FradeliziMeyer}.

\begin{cor}\label{cor:mahler-for-locally-ab}
    Let $K \subseteq\RR^n$ be a locally anti-blocking body with $0$ in the
    interior. Then
	\[
	    \vol(K) \cdot \vol(K^\circ) \ \ge \ \frac{4^n}{n!} \, .
	\]
    Equality holds precisely when each $K_\sigma$ is a reduced Hanner
    polytope.
\end{cor}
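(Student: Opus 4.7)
The plan is to apply Saint-Raymond's theorem (Theorem~\ref{thm:saint-raymond-ab}) orthant-by-orthant and combine the resulting local estimates via the Cauchy--Schwarz inequality. First, I would use the orthant dissection $K = \bigcup_\sigma K_\sigma$ together with Lemma~\ref{lem:polarity-AB-decomposes} to obtain the parallel dissection $K^\circ = \bigcup_\sigma A_\sigma K_\sigma$. Setting $L_\sigma := \sigma K_\sigma \subseteq \RRn^n$, each $L_\sigma$ is anti-blocking; since $0 \in \mathrm{int}(K)$ every $K_\sigma$ contains a neighbourhood of $0$ in $\RR^n_\sigma$, so each $L_\sigma$ is full-dimensional and Saint-Raymond applies to it.

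Because the maps $\sigma$ are volume-preserving and $A_\sigma K_\sigma = \sigma(A L_\sigma)$, the two dissections yield
\[
    \vol(K) \;=\; \sum_{\sigma \in \PM^n} \vol(L_\sigma), \qquad \vol(K^\circ) \;=\; \sum_{\sigma \in \PM^n} \vol(A L_\sigma).
\]
Cauchy--Schwarz then gives
\[
    \vol(K)\,\vol(K^\circ) \;=\; \Bigl(\sum_\sigma \vol(L_\sigma)\Bigr)\Bigl(\sum_\sigma \vol(A L_\sigma)\Bigr) \;\geq\; \Bigl(\sum_\sigma \sqrt{\vol(L_\sigma)\,\vol(A L_\sigma)}\Bigr)^{\!2},
\]
and plugging in Saint-Raymond's bound $\vol(L_\sigma)\vol(A L_\sigma) \geq 1/n!$ for each of the $2^n$ orthants produces
\[
    \vol(K)\,\vol(K^\circ) \;\geq\; \Bigl(\tfrac{2^n}{\sqrt{n!}}\Bigr)^{\!2} \;=\; \frac{4^n}{n!},
\]
which is the desired inequality.

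For the equality discussion, equality in Saint-Raymond for every $\sigma$ forces each $L_\sigma$ to be a reduced Hanner polytope. The main obstacle I anticipate is the equality case in Cauchy--Schwarz, which additionally demands that the ratio $\vol(L_\sigma)/\vol(A L_\sigma)$ be independent of $\sigma$; coupled with $\vol(L_\sigma)\vol(A L_\sigma) = 1/n!$, this forces the volumes $\vol(L_\sigma)$ to coincide across all orthants. To recover the stated characterization one must verify that convexity of $K$, together with the facet-matching constraints across coordinate hyperplanes between reduced Hanner pieces, automatically enforces this equality of volumes; this compatibility step is the only delicate point in the proof.
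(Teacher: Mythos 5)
Your proof of the inequality is exactly the paper's proof: dissect $K$ and (via Lemma~\ref{lem:polarity-AB-decomposes}) $K^\circ$ into orthant pieces, apply Cauchy--Schwarz to the two sums, and finish with Saint-Raymond on each orthant. No differences there.

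Your hesitation about the equality case, however, points to a real issue, and it cuts the other way from what you hope: the ``compatibility step'' you flag is \emph{not} automatic, and the equality condition as stated in the corollary is actually too weak. As you correctly observe, equality forces two independent conditions: equality in Saint-Raymond for every orthant (so each $\sigma K_\sigma$ is a reduced Hanner polytope, up to coordinate scaling), \emph{and} equality in Cauchy--Schwarz (so $\vol(K_\sigma)/\vol(A_\sigma K_\sigma)$, equivalently $\vol(K_\sigma)$, is independent of $\sigma$). Convexity of $K$ does \emph{not} enforce the second condition given the first. A concrete counterexample in $\R^2$: let
\[
  K \ = \ \conv\{(1,1),\,(1,0),\,(0,-1),\,(-1,0),\,(-1,1)\}.
\]
One checks that $K_{++} = [0,1]^2$, $\,\sigma K_{+-} = \sigma K_{--} = \conv(0,e_1,e_2)$, and $\sigma K_{-+} = [0,1]^2$, so every orthant piece is a reduced Hanner polytope (a square or a standard simplex). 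Each piece attains equality in Saint-Raymond, yet the volumes are $1,\tfrac12,1,\tfrac12$, so Cauchy--Schwarz is strict: $\vol(K) = 3$, $\vol(K^\circ) = 3$ (the polar is the pentagon $\conv\{(1,0),(0,1),(1,-1),(-1,-1),(-1,0)\}$), giving $\vol(K)\vol(K^\circ) = 9 > 8 = 4^2/2!$. So the correct equality characterization must additionally require all $\vol(K_\sigma)$ (equivalently, the ratios $\vol(K_\sigma)/\vol(A_\sigma K_\sigma)$) to agree across orthants; the condition ``each $K_\sigma$ is a reduced Hanner polytope'' alone does not suffice, and you should not try to derive the stated characterization because it is false as written.
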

\begin{proof}
    We recall from Lemma~\ref{lem:polarity-AB-decomposes} that 
    $(K\dual )_\sigma  = A(K_\sigma)$ and compute
	\begin{align*}
	\vol(K)
	\vol(K\dual) \ &= \ 
	\sum_{\sigma} \vol(K_\sigma)
	\sum_{\sigma} \vol(A(K_\sigma)) \\
	\ &\ge \
	\left(\sum_{\sigma} \sqrt{\vol(K_\sigma)\vol(A(K)_\sigma)} \right)^2\\
	\ &\ge \
	\left(\sum_{\sigma} \frac{1}{\sqrt{n!}} \right)^2 
	\ = \ \frac{4^n}{n!}\, , 
	\end{align*}
    where the first inequality is Cauchy--Schwartz and the second is
    Theorem~\ref{thm:saint-raymond-ab}.
\end{proof}

Using the same tools, we proceed in proving the main result of this
subsection, Theorem \ref{thm:mahler-for-vj}, which is a version of
Saint-Raymond's inequality for mixed volumes.

\begin{proof}[Proof of Theorem \ref{thm:mahler-for-vj}] 
    We begin by proving~\eqref{eq:mixed-sr-simple}. Let $E\in \Gnjc$. Then
    $P_EK \subseteq E \cong \RR_+^j$ is an anti-blocking body with dual
    $A_EP_EK=P_EAK$, $A_E$ is the duality operator for anti-blocking bodies in
    $E$.  Applying Saint-Raymond's Theorem~\ref{thm:saint-raymond-ab} we get
    $\vol(P_EK)\vol(P_EAK)  \ge  1/j!,$ and hence 
	\begin{equation}\label{eq:product-of-projections-ineq}
        \vol(P_EK)\vol(P_{E^\perp}T) \vol(P_EAK)\vol(P_{E^\perp}AT) \ \ge \
        \frac{1}{j! (n-j)!}. 
	\end{equation}
    Using the Cauchy-Schwarz inequality and the decomposition of
    Lemma~\ref{lem:decomposition-sum}, we get 
	\begin{align*}
	& V(K[j],-T[n-j])V(AK[j],-AT[n-j])\\
    &=\left(\binom{n}{j}^{-1} \sum_{ E\in \Gnjc }
    \vol(P_{E}K)\vol(P_{E^\perp}T) \right)\cdot  \left(\binom{n}{j}^{-1}
    \sum_{E\in \Gnjc} \vol(P_{E}AK)\vol(P_{E^\perp}AT) \right) \\
	&\ge  \binom{n}{j}^{-2}\left(   \sum_{ E\in \Gnjc} 
	\sqrt{\vol(P_{E}K)\vol(P_{E^\perp}T) \vol(P_{E}AK)\vol(P_{E^\perp}AT)}
    \right)^2 \\
    &\ge \binom{n}{j}^{-2}\left(\binom{n}{j}
    \frac{1}{\sqrt{j!}\sqrt{(n-j)!}}\right)^2 = \frac{1}{j!}\frac{1}{(n-j)!},
	\end{align*}
    where the last inequality is due to
    \eqref{eq:product-of-projections-ineq}.
	
    For the general case, \eqref{eq:mixed-sr-comlicated}, we apply the repeated
    Alexandrov-Fenchel inequality (see~\cite[Eqn.~(7.64)]{Schneider-book-NEW})
	\[
        V(K_1,\dots, K_n)^m \ \ge \ \prod_{i=1}^m V(K_i[m],K_{m+1},\dots, K_n)
    \]
	and \eqref{eq:mixed-sr-simple} to get
	\begin{align*}
	&V(K_1,\dots, K_j, -T_1,\dots -T_{n-j})^{j(n-j)} V(AK_1,\dots, AK_j, -AT_1,\dots -AT_{n-j})^{j(n-j)}\\
	&\ge \prod_{i=1}^j V(K_i[j],-T_1,\dots, -T_{n-j})^{n-j}V(AK_i[j],-AT_1,\dots, -AT_{n-j})^{n-j} \\
    &\ge \prod_{i=1}^j\prod_{k=1}^{n-j}
    V(K_i[j],-T_k[n-j])V(AK_i[j],-AT_k[n-j]) \ge
    \left(\frac{1}{j!(n-j)!} \right)^{j(n-j)}.
	\end{align*}
	Taking  $j(n-j)$-th root on both sides we get the desired result.
\end{proof}

Plugging in $K=T$ in Theorem~\ref{thm:mahler-for-vj} results in a generalized
version of Saint-Raymond's Theorem \ref{thm:saint-raymond-ab}.
\begin{cor}\label{cor:mahler-for-vj(K)}
	Let $K\subseteq\R^n_+$ be a full-dimensional anti-blocking body and $0 \le
    j \le n$. Then 
	\[
	    V(K[j],-K[n-j])V(AK[j],-AK[n-j]) \ \ge \ \frac{1}{j!(n-j)!}.
	\]
\end{cor}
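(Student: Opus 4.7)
The plan is to derive this as an immediate specialization of Theorem~\ref{thm:mahler-for-vj}: setting $T = K$ in inequality~\eqref{eq:mixed-sr-simple} yields the claim verbatim, so the corollary requires no separate argument beyond citing that theorem.

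For a self-contained version, I would simply re-run the argument for~\eqref{eq:mixed-sr-simple} with $T=K$. First, I would use Corollary~\ref{cor:decomposition-conv} to expand
\[
    V(K[j],-K[n-j]) \ = \ \binom{n}{j}^{-1} \sum_{E \in \Gnjc} \vol_j(P_E K)\,\vol_{n-j}(P_{E^\perp} K),
\]
and the analogous identity for $V(AK[j],-AK[n-j])$ with $P_E AK$ and $P_{E^\perp}AK$ in place of $P_E K$ and $P_{E^\perp} K$. Second, on each $E \in \Gnjc$, the projection $P_E K$ is anti-blocking in $E \cong \R_+^j$ and its anti-blocking polar taken inside $E$ equals $P_E AK$; Saint-Raymond's Theorem~\ref{thm:saint-raymond-ab}, applied separately in $E$ and $E^\perp$, yields
\[
    \vol_j(P_E K)\,\vol_j(P_E AK) \ \ge \ \tfrac{1}{j!}, \qquad \vol_{n-j}(P_{E^\perp}K)\,\vol_{n-j}(P_{E^\perp}AK) \ \ge \ \tfrac{1}{(n-j)!}.
\]
Third, applying Cauchy--Schwarz to the two sums over $\Gnjc$ converts the product of sums into the square of a sum of geometric means, and the per-term bound above then produces the constant $1/(j!(n-j)!)$.

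I do not foresee a real obstacle, as the statement is literally the $T = K$ case of Theorem~\ref{thm:mahler-for-vj}. The only small point worth verifying is the identity $A_E(P_E K) = P_E(AK)$ between the anti-blocking polar inside the coordinate subspace $E$ and the projection of the ambient anti-blocking polar; this follows from the defining property $P_E K = K \cap E$ for anti-blocking bodies and a direct check of both inclusions, and was already used implicitly in the proof of Theorem~\ref{thm:mahler-for-vj}.
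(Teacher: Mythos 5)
Your proposal is correct and matches the paper exactly: the paper proves the corollary in one line by plugging $T=K$ into Theorem~\ref{thm:mahler-for-vj}, precisely as you do, and indeed the claim is already stated verbatim as inequality~\eqref{eq:mixed-sr-simple2} within that theorem. Your optional self-contained re-derivation is also a faithful replay of the paper's proof of~\eqref{eq:mixed-sr-simple}, including the identity $A_E(P_E K) = P_E(AK)$ and the Cauchy--Schwarz step.
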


\begin{rem}
    Inspecting the proofs above, we see that the equality case requires $P_EK$
    and  $P_{E^\perp}K$ to be reduced Hanner polytopes for every $E \in
    \Gnjc$. This, however, does not imply that $K$ is a reduced Hanner
    polytope. In fact, every $2$-dimensional anti-blocking polytope has this
    property for $j = 1$.
\end{rem}

\subsection{Nearly Mahler for $C$-bodies}\label{subsec:C-bodies}

Recall from the introduction that the $C$-body associated to convex
bodies $K,T \subseteq\R^n$ is defined by
\[
    C(K,T)  \ := \ (K \times \{1\}) \vee (T \times \{-1\}) \subseteq \RR^{n+1} .
\]
The $C$-body construction was introduced by Rogers and Shephard
in~\cite{RS-sections-projections}. These bodies were used in~\cite{Artstein-Note,
ArtsteinKeshet} in relation to Godbersen's conjecture and
in~\cite{Sanyal-ABbodies} in connection with combinatorics of partially
ordered sets. In discrete and convex geometry, the construction of
$C$-bodies is referred to as the \emph{Cayley construction}, which captures
Minkowski sums of $K$ and $T$. For $-1 \le
\lambda \le 1$ 
\begin{equation}\label{eqn:Cayley}
    C(K,T) \cap (\R^n \times \{\lambda\}) \ = \ ( \tfrac{1+\lambda}{2} K +
    \tfrac{1-\lambda}{2} T) \times \{\lambda\} \, .
\end{equation}
The \emph{Cayley trick} states if $K + T = \bigcup_i K_i + T_i$  is a
dissection with $K_i \subseteq K$ and $T_i \subseteq T$ then
\[
    C(K,T) \ = \ \bigcup_i C(K_i,T_i)
\]
is a dissection of $C(K,T)$; cf.~\cite{DLRS} for details. 

 It is useful to note that the volume of a $C$-body can be written in terms of the mixed volumes of its generating sets, a fact used already in \cite{RS-difference}, the proof of which is included for completeness. 
\begin{lem}\label{lem:volofC}%
    \label{lem:vols-of-vonstructions}
    Let $K,T \subseteq\RRn^n$ be convex bodies. Then
    \[
        \vol(C(K,T)) \ = \ \frac{2}{n+1} \sum_{j=0}^n V(K[j],T[n-j]) \, .
    \]
\end{lem}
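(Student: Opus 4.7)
The plan is to compute the volume of $C(K,T)$ by slicing it with horizontal hyperplanes $\R^n \times \{\lambda\}$ and integrating. Equation~\eqref{eqn:Cayley} gives the shape of each slice, so Fubini's theorem yields
\[
\vol(C(K,T)) \ = \ \int_{-1}^{1} \vol_n\!\left( \tfrac{1+\lambda}{2} K + \tfrac{1-\lambda}{2} T \right) d\lambda.
\]
This reduces the problem to a one-dimensional integral of the polynomial expansion of a Minkowski sum.

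Next I would expand the integrand using the defining property of mixed volumes:
\[
\vol_n\!\left( \tfrac{1+\lambda}{2} K + \tfrac{1-\lambda}{2} T \right) \ = \ \sum_{j=0}^n \binom{n}{j} \left(\tfrac{1+\lambda}{2}\right)^{\!j} \left(\tfrac{1-\lambda}{2}\right)^{\!n-j} V(K[j], T[n-j]).
\]
Interchanging sum and integral, everything reduces to evaluating the scalar integrals
\[
I_j \ := \ \int_{-1}^{1} \left(\tfrac{1+\lambda}{2}\right)^{\!j} \left(\tfrac{1-\lambda}{2}\right)^{\!n-j} d\lambda.
\]
Substituting $u = \frac{1+\lambda}{2}$ (so $d\lambda = 2\,du$) turns $I_j$ into $2 \int_0^1 u^j (1-u)^{n-j} du = 2 B(j+1, n-j+1) = \frac{2\, j!\,(n-j)!}{(n+1)!}$.

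Combining these gives $\binom{n}{j} I_j = \frac{2}{n+1}$, independent of $j$, so
\[
\vol(C(K,T)) \ = \ \frac{2}{n+1} \sum_{j=0}^n V(K[j], T[n-j]),
\]
as claimed. There is no real obstacle here: the only subtle point is making sure the Cayley slicing formula~\eqref{eqn:Cayley} is used correctly (the body $C(K,T)$ is the convex hull of two parallel $n$-dimensional slices, so its cross-sections interpolate linearly) and then recognizing the resulting scalar integrals as beta functions. Both are standard, so the proof is essentially a direct computation.
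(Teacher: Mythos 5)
Your proof is correct and follows exactly the same route as the paper: slice $C(K,T)$ by horizontal hyperplanes, use the Cayley cross-section formula~\eqref{eqn:Cayley}, expand the Minkowski sum via the polynomial mixed-volume identity, and evaluate the resulting Beta integrals. (Incidentally, your integrand $\tfrac{1+\lambda}{2}K + \tfrac{1-\lambda}{2}T$ is the sign-correct form matching~\eqref{eqn:Cayley}; the paper's displayed computation has a harmless typo writing $-\tfrac{1-\lambda}{2}T$ there.)
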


\begin{proof} The proof reduces to a computation of an integral: 
	\begin{eqnarray*}
	 \vol_{n+1}(C(K,T)) & = & \int_{-1}^1 \vol_n(C(K,T) \cap \RR^n \times \{\lambda\})d\lambda \\
	 & = & \int_{-1}^1 \vol_n ( \tfrac{1+\lambda}{2} K -
	 \tfrac{1-\lambda}{2} T) d\lambda  \\
	 &=& \int_{-1}^1  \sum_{j=0}^n \binom{n}{j} \left(
     \tfrac{1+\lambda}{2}\right)^j \left( \tfrac{1-\lambda}{2} \right)^{n-j}
     V(K[j], T[n-j])d\lambda \\ &=& \frac{2}{n+1} \sum_{j=0}^n V(K[j],T[n-j]).
     \qedhere
	\end{eqnarray*}
\end{proof}

It was  shown in \cite{Sanyal-ABbodies} that polarity respects the structure
of $C$-bodies.

\begin{thm}[See {\cite[Thm.~3.4]{Sanyal-ABbodies}}]
    \label{thm:duality-respects-C}
    Let $K, T \subseteq\RRn^n$ be full-dimensional anti-blocking bodies. Then
	\[ 
        C(K,-T)^\circ \ = \  C(-2AT, 2AK) \, .
    \]
\end{thm}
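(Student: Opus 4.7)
The plan is to verify the identity pointwise, unfolding both sides into explicit conditions on a generic point $(y,s)\in\R^{n+1}$. First, I would observe that $(y,s)\in C(K,-T)^\circ$ reduces (by testing against the generators $K\times\{1\}$ and $-T\times\{-1\}$) to the pair of conditions
\[
\max_{x\in K}\langle y,x\rangle\ \le\ 1-s \quad\text{and}\quad \max_{x\in T}\langle -y,x\rangle\ \le\ 1+s,
\]
and that, by the Cayley slicing formula~\eqref{eqn:Cayley}, $(y,s)\in C(-2AT,2AK)$ iff $-1\le s\le 1$ and $y\in-(1+s)AT+(1-s)AK$.

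Next, I would exploit the anti-blocking hypothesis to decouple the polar condition into two pieces living in $\RRn^n$. Writing $y_+$ and $y_-$ for the coordinatewise positive and negative parts of $y$, the key reduction is
\[
\max_{x\in K}\langle y,x\rangle\ =\ \max_{x\in K}\langle y_+,x\rangle \quad\text{and}\quad \max_{x\in T}\langle -y,x\rangle\ =\ \max_{x\in T}\langle y_-,x\rangle,
\]
which I would prove using order-convexity: for any maximizer of $\langle y_+,\cdot\rangle$ over $K$, zeroing out its coordinates at positions where $y$ is negative produces a point still in $K$ which achieves the same pairing against $y$. This is the step I expect to be the heart of the argument---it is where the anti-blocking structure (rather than mere convexity) is essential.

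With the reduction in hand, the two inclusions are direct. For $(\supseteq)$, given $y=-(1+s)a+(1-s)b$ with $a\in AT$ and $b\in AK$, nonnegativity of $a,b$ yields $y_+\le(1-s)b$ and $y_-\le(1+s)a$ coordinatewise, so monotonicity of $\max_{x\in K}\langle\,\cdot\,,x\rangle$ on $\RRn^n$ combined with the defining inequalities of $AK$ and $AT$ gives the two polar bounds. For $(\subseteq)$, when $-1<s<1$ the reduction forces $b:=y_+/(1-s)\in AK$ and $a:=y_-/(1+s)\in AT$, and $-(1+s)a+(1-s)b=y_+-y_-=y$. I would dispatch the boundary cases $s=\pm 1$ separately: $s=1$ forces $\max_{x\in K}\langle y_+,x\rangle\le 0$, which by full-dimensionality of $K$ implies $y_+=0$, so $y=-y_-$ with $-y\in 2AT$, placing $(y,1)$ on the top slice of $C(-2AT,2AK)$, and $s=-1$ is symmetric.
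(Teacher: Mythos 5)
The paper does not include a proof of this theorem; it is cited from \cite[Thm.~3.4]{Sanyal-ABbodies}, so there is no in-paper argument to compare against. Taken on its own, your direct verification is correct. Unwinding $(y,s)\in C(K,-T)^\circ$ by testing only on the generating sets $K\times\{1\}$ and $-T\times\{-1\}$ is justified because polarity of a convex hull is determined by the generators, and the Cayley slice description~\eqref{eqn:Cayley} gives the precise membership condition for $C(-2AT,2AK)$. The reduction $\max_{x\in K}\langle y,x\rangle=\max_{x\in K}\langle y_+,x\rangle$ is indeed the step where the anti-blocking hypothesis enters, and your argument for it is sound: one inequality is monotonicity ($y\le y_+$ and $K\subset\RRn^n$), the other is obtained by zeroing out the coordinates of a maximizer where $y<0$, which stays in $K$ by order-convexity and preserves the pairing with $y_+$. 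Both inclusions then go through as you describe, and the boundary cases $s=\pm1$ are handled correctly using full-dimensionality. One small gap worth closing explicitly in the $(\subseteq)$ direction: before dividing by $1-s$ and $1+s$, note that $0\in K$ and $0\in T$ already force $1-s\ge 0$ and $1+s\ge 0$ from the two polar inequalities; this is immediate but should be stated so the definitions $b:=y_+/(1-s)$ and $a:=y_-/(1+s)$ are licit for $-1<s<1$.
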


We will be mostly interested in the case $C(K,-K)$, which we  denoted by
$C(K)$. Obviously $C(K)$ is centrally-symmetric and Mahler's conjecture
should hold. Note that both the cube and the cross-polytope are special
cases of this type, with the base $K$ chosen to be a cube and a simplex, respectively. 

In view of Theorem~\ref{thm:duality-respects-C},
Lemma~\ref{lem:vols-of-vonstructions}, and
Corollary~\ref{cor:decomposition-conv}, Mahler's conjecture is equivalent to 
\begin{equation}\label{eq:mahler-for-C(K)}
    \vol(K \vee -K) \vol( AK \vee -AK ) 
    \ \ge \
    2^n\frac{n+1}{n!} \, ,
\end{equation}
which is why our first goal is to prove Proposition
\ref{prop:nearly-mahler-1}.

\begin{proof}[Proof of Proposition~\ref{prop:nearly-mahler-1}]
	Using Cauchy-Schwarz together with the bounds of Theorem \ref{thm:mahler-for-vj} we have
    \begin{align*} 
	\vol(K \vee -K)\vol(AK \vee -AK) &\ = \ \Bigl (  \sum_{j=0}^n V(K[j],-K[n-j])\Bigr )  \Bigl(  \sum_{j=0}^n
    V(AK[j],-AK[n-j]) \Bigr) \\
    &\ \ge \ \Bigl(   \sum_{j=0}^n \sqrt{V(K[j],-K[n-j]) V(AK[j],-AK[n-j])
    } \Bigr)^2 	\\
	& \ \ge \ \frac{1}{n!}  \left(   \sum_{j=0}^n \binom{n}{j}^{1/2}
    \right)^2. \qedhere
	\end{align*} 
\end{proof}
\begin{rem}
    The above bound is clearly not tight, as
    \[
        \left(\sum_{j=0}^n \binom{n}{j}^{1/2}\right)^2 \ < \ 2^n (n+1),
    \] 
    e.g. by Cauchy-Schwartz.
\end{rem}

Using another, perhaps simpler, route, we can show a slightly weaker bound:
\begin{prop} \label{prop:nearly-mahler-2}
    Let $K\subseteq\RR_{+}^n$ be a full-dimensional anti-blocking body. Then
	\[
        \vol(K \vee -K)\vol(AK \vee -AK) \ \ge \  \frac{2^n}{n!}\sqrt{\pi
        n/2} \, .  
	\]
\end{prop}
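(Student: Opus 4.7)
The plan is to follow the same Cauchy--Schwarz route as in Proposition~\ref{prop:nearly-mahler-1}, but to replace the delicate estimate of $\sum_j \binom{n}{j}^{1/2}$ by a single crude inequality, accepting a factor-of-$2$ loss against the sharper bound. Concretely, I would begin by directly invoking Proposition~\ref{prop:nearly-mahler-1} to reduce the claim to a purely combinatorial estimate:
\[
\vol(K\vee -K)\,\vol(AK\vee-AK) \;\geq\; \frac{1}{n!}\left(\sum_{j=0}^n \binom{n}{j}^{1/2}\right)^2,
\]
after which it suffices to show $\left(\sum_{j=0}^n \binom{n}{j}^{1/2}\right)^2 \geq 2^n\sqrt{\pi n/2}$.

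This combinatorial estimate rests on two elementary facts. First, for any non-negative finite sequence $(a_j)$ with maximum $M = \max_j a_j$, the termwise inequality $\sqrt{a_j M} \geq a_j$ summed over $j$ gives
\[
\Bigl(\sum_j \sqrt{a_j}\Bigr)^2 \;\geq\; \frac{\bigl(\sum_j a_j\bigr)^2}{M}.
\]
Applied to $a_j = \binom{n}{j}$, for which $\sum_j a_j = 2^n$ and $M = \binom{n}{\lfloor n/2 \rfloor}$, this yields
\[
\Bigl(\sum_{j=0}^n \binom{n}{j}^{1/2}\Bigr)^2 \;\geq\; \frac{4^n}{\binom{n}{\lfloor n/2 \rfloor}}.
\]
Second, I would invoke the standard explicit Stirling bound $\binom{n}{\lfloor n/2 \rfloor} \leq 2^n/\sqrt{\pi n/2}$, valid uniformly for all $n \geq 1$; this follows either from Robbins's refined Stirling inequalities or from a short induction that compares $\binom{2m+2}{m+1}$ to $\binom{2m}{m}$. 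Substituting these two estimates gives
\[
\vol(K\vee -K)\,\vol(AK\vee-AK) \;\geq\; \frac{1}{n!}\cdot \frac{4^n}{\binom{n}{\lfloor n/2\rfloor}} \;\geq\; \frac{4^n}{n!}\cdot \frac{\sqrt{\pi n/2}}{2^n} \;=\; \frac{2^n}{n!}\sqrt{\pi n/2}.
\]

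The main obstacle is simply producing a clean explicit Stirling-type upper bound on the central binomial coefficient with the correct leading constant $\sqrt{2/\pi}$; once such an estimate is in hand, the remaining steps are term-by-term comparisons. This is precisely why the route qualifies as ``simpler'' than the Laplace/Gaussian-type evaluation of $\sum_j \sqrt{\binom{n}{j}}$ implicit in the sharper Proposition~\ref{prop:nearly-mahler-1}, which asymptotically produces roughly $\frac{2^{n+1}}{n!}\sqrt{\pi n/2}$, i.e.\ twice our bound.
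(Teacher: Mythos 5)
Your proof is correct, and it takes a genuinely different route from the paper's. The paper proves Proposition~\ref{prop:nearly-mahler-2} without reference to Proposition~\ref{prop:nearly-mahler-1}: it considers the locally anti-blocking body $K-K$ and its polar $(K-K)^\circ = AK \vee -AK$, applies Corollary~\ref{cor:mahler-for-locally-ab} to get
\[
\Bigl(\textstyle\sum_{j}\binom{n}{j}V(K[j],-K[n-j])\Bigr)\Bigl(\textstyle\sum_{j}V(AK[j],-AK[n-j])\Bigr)\ \ge\ \frac{4^n}{n!},
\]
and then pulls a factor $\binom{n}{\lfloor n/2\rfloor}$ out of the first sum using $\binom{n}{j}\le\binom{n}{\lfloor n/2\rfloor}$. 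You instead derive Proposition~\ref{prop:nearly-mahler-2} from Proposition~\ref{prop:nearly-mahler-1} via the elementary pointwise bound $a_j\le\sqrt{a_jM}$ (with $M=\max_j a_j$), which gives $\bigl(\sum_j\sqrt{a_j}\bigr)^2\ge(\sum_j a_j)^2/M$. Both arguments converge on the same combinatorial endgame $\frac{4^n}{\binom{n}{\lfloor n/2\rfloor}}\ge 2^n\sqrt{\pi n/2}$, which is the standard upper bound on the central binomial coefficient. What your approach buys is an explicit demonstration that Proposition~\ref{prop:nearly-mahler-1} strictly implies Proposition~\ref{prop:nearly-mahler-2} by a one-line numerical inequality; what the paper's approach buys is independence from Theorem~\ref{thm:mahler-for-vj} (the mixed Saint-Raymond), since it needs only the plain Mahler inequality for locally anti-blocking bodies. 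Your closing remark that your route is ``simpler'' is therefore a bit misleading in context: your argument is not self-contained but rides on Proposition~\ref{prop:nearly-mahler-1}, whereas the paper's was designed to give an alternative proof that bypasses it. Still, as a deduction of the weaker bound from the sharper one, your argument is clean and correct.
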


\begin{proof}
	Consider the locally anti-blocking body $K-K\subseteq\RR^n$ and its polar
    $(K-K)^\circ = AK \vee -AK$. We know by
    Lemma~\ref{lem:vols-of-vonstructions} that 
    \begin{align*} 
            \vol(K-K) \ &= \ \sum_{j=0}^n \binom{n}{j} V(K[j],-K[n-j]) \quad
            \text{and}\\
	       \vol (AK \vee -AK) \ &= \ \sum_{j=0}^n  V(AK[j],-AK[n-j]) \, .
    \end{align*}
	From Corollary~\ref{cor:mahler-for-locally-ab} we get that 
    \[
        \Bigl( \sum_{j=0}^n \binom{n}{j} V(K[j],-K[n-j]) \Bigr) 
        \Bigl( \sum_{j=0}^n V(AK[j],-AK[n-j]) \Bigr) \ \ge \ \frac{4^n}{n!}.
	\]
	We may use the crude bound $\binom{n}{j}\le \binom{n}{[n/2]}$ to get 
	\[
	\Bigl(\sum_{j=0}^n V(K[j],-K[n-j]) \Bigr) \Bigl(\sum_{j=0}^n
    V(AK[j],-AK[n-j]) \Bigr)
	\ge  \frac{2^n}{\binom{n}{[n/2]}}\frac{2^n}{n!} \ge \sqrt{\pi n/2} \frac{2^n}{n!}
	\]
	where we bound the central binomial coefficient using standard bounds, see e.g. \cite{Cameron1994combinatorics}.
\end{proof}

Summarizing, we have proved Proposition \ref{prop:nearly-mahler-1}, which
paired with Lemma \ref{lem:volofC} and Theorem \ref{thm:duality-respects-C}
gives the desired bound for the volume product of $C(K)$ and $C(K)^\circ$.
Before moving to the proof, let us discuss the bodies $C_\lambda(K)$.

For two convex bodies $K, T \subseteq\R^n$ one may construct a family of convex
bodies
\[
    C_\lambda(K,-T) \ := \ \conv( K \times \{\lambda\} \cup -T \times
    \{-\lambda\} \cup [-\mathbf{e}_{n+1}, \mathbf{e}_{n+1} ] ) \subset
    \RR^{n+1} \, ,
\]
for $-1 \le \lambda \le 1$.

This family is in fact a ``system of moving shadows'' in direction $e_{n+1}$,
as defined and studied in~\cite{Shephard-ShadowSys} by Shephard. Let $K
\subseteq\R^n$ be a convex body and $\theta \in \R^n$.  A {\Def{shadow system}}
in direction $\theta$ is the one-parameter family of convex bodies defined by
$K_t := \conv \{ x+ \alpha(x)t\theta: x\in K\}$, where $\alpha$ is a bounded
function on $K$. 

\begin{thm}[Shephard~\cite{Shephard-ShadowSys}]\label{thm:shadow_vol}
    Let $K^{(i)} \subseteq\R^n$ be convex bodies for $i=1,\dots,n$. Then the
    function $t \mapsto V(K_t^{(1)}, \ldots, K_t^{(n)})$ is convex.
\end{thm}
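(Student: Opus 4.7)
My strategy is to lift each shadow system to a single fixed convex body in $\RR^{n+1}$, rewrite the mixed volume in question as an $(n+1)$-dimensional mixed volume involving a segment that depends on $t$, and then conclude convexity from the classical integral formula for mixed volumes. Concretely, let $\alpha^{(i)}\colon K^{(i)}\to\RR$ be the bounded functions generating the shadow systems $K_t^{(i)}=\conv\{x+\alpha^{(i)}(x)t\theta:x\in K^{(i)}\}$ and define the lifts
\[
    \bar K^{(i)} \ := \ \conv\bigl\{(x,\alpha^{(i)}(x)):x\in K^{(i)}\bigr\} \ \subseteq \ \RR^{n+1}.
\]
A direct check shows $K_t^{(i)}=\pi_t(\bar K^{(i)})$ for the $t$-dependent linear map $\pi_t(y,s):=y+ts\theta$. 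Factor $\pi_t=\pi_0\circ T_t$ with $T_t(y,s):=(y+ts\theta,s)$ a unimodular shear and $\pi_0(y,s):=y$ the orthogonal projection onto the first $n$ coordinates. Combining the classical projection formula for mixed volumes,
\[
    V_n(\pi_0L_1,\ldots,\pi_0L_n) \ = \ (n+1)\,V_{n+1}\bigl(L_1,\ldots,L_n,\,[-\tfrac{1}{2}e_{n+1},\tfrac{1}{2}e_{n+1}]\bigr),
\]
(see~\cite{Schneider-book-NEW}) with the invariance $V_{n+1}(T_tL_1,\ldots,T_tL_{n+1})=V_{n+1}(L_1,\ldots,L_{n+1})$ under the unimodular $T_t$, one obtains the key identity
\[
    V\bigl(K_t^{(1)},\ldots,K_t^{(n)}\bigr) \ = \ (n+1)\,V_{n+1}\bigl(\bar K^{(1)},\ldots,\bar K^{(n)},\,I_t\bigr),
\]
where $I_t:=T_t^{-1}\bigl[-\tfrac{1}{2}e_{n+1},\tfrac{1}{2}e_{n+1}\bigr]$ is the segment in $\RR^{n+1}$ with endpoints $\bigl(\mp\tfrac{t}{2}\theta,\pm\tfrac{1}{2}\bigr)$.

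The second step is to invoke the mixed-area-measure representation
\[
    (n+1)\,V_{n+1}\bigl(\bar K^{(1)},\ldots,\bar K^{(n)},I_t\bigr) \ = \ \int_{S^n} h_{I_t}(u)\,dS\bigl(\bar K^{(1)},\ldots,\bar K^{(n)};\,u\bigr),
\]
where $dS(\bar K^{(1)},\ldots,\bar K^{(n)};\cdot)$ is a non-negative Borel measure on $S^n$ that is \emph{independent of $t$}. For $u=(u',u_{n+1})\in S^n$ the support function computes to $h_{I_t}(u)=\tfrac{1}{2}\bigl|u_{n+1}-t\langle\theta,u'\rangle\bigr|$, which is affine-plus-absolute-value and hence convex in $t$ for every fixed $u$. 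Since integrating a $t$-convex integrand against a non-negative measure preserves convexity in $t$, the function $V(K_t^{(1)},\ldots,K_t^{(n)})$ is convex, as required.

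The main technical point is establishing the identity $V(K_t^{(1)},\ldots,K_t^{(n)})=(n+1)V_{n+1}(\bar K^{(1)},\ldots,\bar K^{(n)},I_t)$: one has to correctly set up the lifts $\bar K^{(i)}$ so that $\pi_t(\bar K^{(i)})$ really is the shadow system, and carefully track the scaling through the shear $T_t$ in the projection formula. Once this identity is in place, the remainder of the proof reduces to the essentially trivial convexity of $t\mapsto|a-bt|$. I deliberately avoid trying to deduce the mixed-volume case from the single-body case ($V=\vol$) by polarization in Minkowski parameters, because extracting individual mixed-volume coefficients from a convex-in-$t$ polynomial requires signed combinations and does not preserve convexity; the lifting approach is what makes the argument go through directly.
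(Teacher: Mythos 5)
Your proof is correct and complete, and it is essentially Shephard's original argument; the paper cites the result without reproducing a proof, so there is no in-paper derivation to compare against. Each step checks out: linearity of $\pi_t$ gives $\pi_t(\bar K^{(i)})=K^{(i)}_t$, the factorization $\pi_t=\pi_0\circ T_t$ with $T_t\in \mathrm{SL}_{n+1}$ combined with the segment projection formula yields the key identity $V(K^{(1)}_t,\dots,K^{(n)}_t)=(n+1)V_{n+1}(\bar K^{(1)},\dots,\bar K^{(n)},I_t)$, the mixed area measure $dS(\bar K^{(1)},\dots,\bar K^{(n)};\cdot)$ is a non-negative Borel measure independent of $t$, and $h_{I_t}(u)=\tfrac12\bigl|u_{n+1}-t\iprod{\theta}{u'}\bigr|$ is convex in $t$ for each fixed $u\in S^n$. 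Two small points worth tidying: take $\bar K^{(i)}$ to be the \emph{closed} convex hull, since $\alpha^{(i)}$ is only assumed bounded, not continuous (this changes nothing downstream); and note explicitly that all $n$ shadow systems share the same direction $\theta$, which is what makes a single shear $T_t$ work simultaneously for all of them. Your closing remark that one cannot simply polarize the single-body convexity statement ($V=\vol$) to obtain the mixed-volume version is well taken and is exactly why the lifting route is needed.
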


\begin{cor}\label{cor:vol_Clambda}
    Let $K,T \subseteq\R^n$ be two anti-blocking bodies. Then for all $-1 \le
    \lambda \le 1$
    \[
        \vol( C_\lambda(K,-T)) \ = \ \vol(C(K,-T)) \, .
    \]
\end{cor}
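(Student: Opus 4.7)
The plan is to establish that $f(\lambda) := \vol(C_\lambda(K,-T))$ is both convex and even as a function of $\lambda \in [-1,1]$, and then to match its values at $\lambda = 0$ and $\lambda = 1$. A convex even function attains its minimum at $0$, and a convex function on $[-1,1]$ with equal values at $\pm 1$ is bounded above by that endpoint value; combining these two inequalities with $f(0) = f(1)$ forces $f$ to be identically equal to $f(1) = \vol(C(K,-T))$, which is the claim.

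For convexity, I realize the family $C_\lambda(K,-T)$, $\lambda \in [-1,1]$, as a shadow system in direction $\theta = e_{n+1}$. Take as base the set $B = (K \times \{1\}) \cup (-T \times \{-1\}) \cup \{\pm e_{n+1}\}$ together with the bounded function $\alpha$ equal to $-1$ on $K \times \{1\}$, $+1$ on $-T \times \{-1\}$, and $0$ on $\{\pm e_{n+1}\}$. Under the parametrization $t = 1 - \lambda \in [0,2]$, the displaced set is $(K \times \{\lambda\}) \cup (-T \times \{-\lambda\}) \cup \{\pm e_{n+1}\}$, whose convex hull is precisely $C_\lambda(K,-T)$ since the segment $[-e_{n+1}, e_{n+1}]$ is the convex hull of its two endpoints. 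Theorem~\ref{thm:shadow_vol}, applied with $K^{(1)} = \cdots = K^{(n+1)}$ all equal to $C_\lambda(K,-T)$, then yields convexity of $f$ in $t$ and hence in $\lambda$. Evenness follows because the reflection $R:(x,x_{n+1}) \mapsto (x,-x_{n+1})$ is volume-preserving and sends $C_{-\lambda}(K,-T)$ onto $C_\lambda(K,-T)$.

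To match $f(0)$ and $f(1)$: at $\lambda = 1$ the segment $[-e_{n+1}, e_{n+1}]$ is already contained in $K \times \{1\} \cup -T \times \{-1\}$ (because $0 \in K$ and $0 \in T$), so $C_1(K,-T) = C(K,-T)$ and Lemma~\ref{lem:volofC} gives $f(1) = \tfrac{2}{n+1} \sum_{j=0}^n V(K[j],-T[n-j])$. At $\lambda = 0$, $C_0(K,-T)$ is the bipyramid over $K \vee (-T)$, sitting in $\{x_{n+1}=0\}$, with apices $\pm e_{n+1}$, so $f(0) = \tfrac{2}{n+1} \vol(K \vee -T)$; since $K$ and $T$ are anti-blocking, the second identity of Corollary~\ref{cor:decomposition-conv} gives $\vol(K \vee -T) = \sum_{j=0}^n V(K[j],-T[n-j])$, so $f(0) = f(1)$. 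The main thing to get right is the shadow-system description; after that the remaining pieces are the short convexity argument and the direct computation of the two special values of $f$.
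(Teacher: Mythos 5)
Your proof is correct and follows the paper's own approach: both use Shephard's theorem to get convexity of $\lambda \mapsto \vol(C_\lambda(K,-T))$ along the shadow system, then observe the volumes at $\lambda = -1, 0, 1$ agree (you obtain $f(-1)=f(1)$ via evenness; the paper cites it directly), forcing constancy on $[-1,1]$. One small wording slip: the segment $[-e_{n+1},e_{n+1}]$ is not contained in the union $K\times\{1\}\cup(-T)\times\{-1\}$ but in its \emph{convex hull} (its endpoints lie in the union since $0\in K\cap T$), which is what gives $C_1(K,-T)=C(K,-T)$.
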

\begin{proof}
    Since $C_\lambda(K,-T)$ is a shadow system, the claim follows from
    Theorem~\ref{thm:shadow_vol} together with the fact that 
    $\vol(C_1(K,-T))=\vol(C_{0}(K,-T))=\vol(C_{-1}(K,-T))$.
\end{proof}

\begin{proof}[Proof of Theorem \ref{thm:mahler-for-c-bodies}]
    It is shown in~\cite{Campi-Gronchi2006volume}, that $1/\vol(K_\lambda)$ is
    a convex function of $\lambda$ when $K_\lambda$ is a shadow system. Hence,
    together with Corollary~\ref{cor:vol_Clambda} we have 
    \[
        \vol(C_\lambda(K,-T))\vol(C_\lambda(K,-T)^\circ)  \ \ge \
        \vol(C_1(K,-T))\vol(C_1(K,-T)^\circ)
    \]

    Thus, it is enough to prove the lower bound for $\lambda = 1$, namely for
    the body $C(K)$ itself. We see	that 
    \begin{eqnarray*} \vol(C(K))\vol(C(K)^\circ) &=& 2^{n} \vol(C(K)) \vol(C(AK)) \\&=& \frac{2^{n+2}}{(n+1)^2} \left(\sum_{j=0}^n V(K[j], -K[n-j])\right) 
\left(\sum_{j=0}^n V(AK[j], -AK[n-j])\right). 
\end{eqnarray*}
 where the first equality is by Theorem \ref{thm:duality-respects-C} and the second by Lemma \ref{lem:volofC}. 
Using Corollary \ref{cor:decomposition-conv} we may rewrite this as 
\[ \vol(C(K))\vol(C(K)^\circ) =   \frac{2^{n+2}}{(n+1)^2} \vol(K\vee -K) \vol(AK \vee -AK). 
\] 
From Proposition \ref{prop:nearly-mahler-1} we obtain the lower bound
\[  \vol(C(K))\vol(C(K)^\circ)\ge    \frac{2^{n+2}}{(n+1)^2}  \frac{1}{n!}  \left(   \sum_{j=0}^n \binom{n}{j}^{1/2} \right)^2. \] 
A trivial lower bound on the right hand side is $\frac{1}{n+1}\cdot 4^{n+1}/(n+1)!$. 
With some more work one may use approximations of binomial coefficients to arrive at the bound  
\[
\vol(C(K))\vol(C(K)^\circ) \ge \frac{\sqrt{2\pi n}}{n+1} \cdot
\frac{4^{n+1}}{(n+1)!}.
\]
Alternatively, one may use Proposition \ref{prop:nearly-mahler-2} instead, and get a bound of the same order in $n$, with slightly worse numerical constants. 
\end{proof}

\section{A lower bound on mixed volumes}
\label{sec:Kleitman}

In this section we will prove Theorem \ref{thm:kleitman-bound-on-mixed}, which is in spirit a bound on mixed volumes of anti-blocking bodies in the opposite direction to the bound in Godbersen's conjecture. 
As we explain in Section \ref{sec:God}, the lower bound 
 $\vol(K) = V(K,\dots ,K) \leq V(K[j], -K[n-j])$ is true for any convex body $K$. When $-K$ is replaced by $-T$, the same method produces the lower bound $\vol(K)^{j/n}\vol(T)^{(n-j)/n} \leq V(K[j], -T[n-j])$. It turns out that when $K$ and $T$ are anti-blocking bodies, a much stronger lower bound is available, and we will show, as claimed in Theorem \ref{thm:kleitman-bound-on-mixed}, that in such a case 
\[  V(K[j], T[n-j])  \leq V(K[j], -T[n-j]).\]

With this in hand, one gets, using the decomposition of mixed volumes, the second part of Theorem \ref{thm:kleitman-bound-on-mixed}, namely

\begin{thm}\label{thm:bound-of-vol(K+T)}
    For two  anti-blocking bodies $K, T \subseteq\RR^n$ it holds that 
	\begin{equation}\label{eq:bound-of-vol(K+T)}
	    \vol(K + T) \ \leq \ \vol(K - T) .
	\end{equation}
\end{thm}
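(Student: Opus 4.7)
The plan is to deduce Theorem~\ref{thm:bound-of-vol(K+T)} directly from the termwise mixed-volume inequality $V(K[j],T[n-j])\le V(K[j],-T[n-j])$ stated as the first assertion of Theorem~\ref{thm:kleitman-bound-on-mixed}. Indeed, the conclusion of Theorem~\ref{thm:bound-of-vol(K+T)} is essentially a restatement of the ``in particular'' clause of that theorem, so what remains is to make the passage from mixed volumes to volumes fully explicit.

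First I would write down the Minkowski polynomial expansions of the two quantities in question. Since $-T\subset\RR^n$ is itself a convex body, the standard multilinear formula for the volume of a Minkowski combination gives, after setting all scalar coefficients to $1$,
\[
\vol(K+T)\ =\ \sum_{j=0}^n \binom{n}{j}\, V(K[j],T[n-j])
\]
and
\[
\vol(K-T)\ =\ \vol(K+(-T))\ =\ \sum_{j=0}^n \binom{n}{j}\, V(K[j],-T[n-j]).
\]
Both expansions are nonnegative linear combinations of mixed volumes with identical weights $\binom{n}{j}$. Invoking the termwise inequality from Theorem~\ref{thm:kleitman-bound-on-mixed} for each index $0\le j\le n$ and summing yields the desired bound $\vol(K+T)\le\vol(K-T)$.

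The only genuine obstacle is the termwise mixed-volume comparison itself, which is where the anti-blocking hypothesis on $K$ and $T$ really enters; once that comparison is granted, the passage to the volume inequality is purely formal. This is to be expected, because both $\vol(K+T)$ and $\vol(K-T)$ arise as the value at $\lambda=1$ of the polynomials $P(\lambda):=\vol(K+\lambda T)$ and $Q(\lambda):=\vol(K+\lambda(-T))$, and any coefficient-wise control of the form $[\lambda^{n-j}]P(\lambda)\le[\lambda^{n-j}]Q(\lambda)$ immediately upgrades to control on the evaluation at any $\lambda\ge 0$.
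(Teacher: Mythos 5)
Your argument is logically correct: the two Minkowski expansions are standard, and summing the termwise inequality from the first assertion of Theorem~\ref{thm:kleitman-bound-on-mixed} over $j$ with the weights $\binom{n}{j}$ immediately gives $\vol(K+T)\le\vol(K-T)$. There is no circularity, since the termwise inequality is proved in Section~\ref{sec:Kleitman} by Steiner symmetrization and shadow systems, without ever appealing to Theorem~\ref{thm:bound-of-vol(K+T)}.

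That said, this is \emph{not} the proof the paper attaches to Theorem~\ref{thm:bound-of-vol(K+T)}. The paper deliberately gives an independent and shorter argument: write $L := K - T$, which is locally anti-blocking and in particular order-convex; observe via the decompositions of Lemma~\ref{lem:locally-ab-sum-decomp} that $\Delta(L) := (L-L)\cap\RRn^n = K + T$; and then invoke the Reverse Kleitman inequality of Bollob\'as, Leader, and Radcliffe (Theorem~\ref{thm:reverse-kleitman}), $\vol(\Delta(L))\le\vol(L)$, to conclude. That route proves the volume inequality without needing the termwise mixed-volume refinement at all. Your route is exactly the ``second proof'' the paper alludes to just after stating the theorem, and it trades the heavier machinery (the termwise inequality requires the Steiner/shadow-system argument of Lemma~\ref{lem:hyp_Steiner}) for a one-line formal deduction; the paper's route is more self-contained for the volume statement and serves the expository purpose of motivating the termwise question. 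Both are valid, but you should be aware that your deduction rests on the harder result, whereas the paper's proof of Theorem~\ref{thm:bound-of-vol(K+T)} uses only the Reverse Kleitman inequality as a black box.
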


 Theorem \ref{thm:bound-of-vol(K+T)}
  turns out to have an independent simple proof relying on the Reverse Kleitman inequality, which we first describe.  
The following theorem of Bollob\'as, Leader and
Radcliffe was given in \cite{Bollobas-Reverse-Kleitman}.
\begin{thm}[Reverse Kleitman inequality for order-convex bodies]
    \label{thm:reverse-kleitman}
	Given an order-convex body $L \subseteq\RR^n$, 
	\[ 
        \vol(\Delta(L)) \ \leq \ \vol(L) \, ,
    \]
	where $\Delta(L) = (L-L)\cap \RRn^n$.
\end{thm}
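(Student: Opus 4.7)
The plan is to prove the inequality by induction on the dimension $n$, anchored by a structural observation about $\Delta(L)$.

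\textbf{Preliminary observation.} I first verify that $\Delta(L)$ is itself an anti-blocking body. Given $y = a - b \in \Delta(L)$ with $a, b \in L$, $a \ge b$, and $0 \le z \le y$, set $a' := z + b$; then $b \le a' \le a$, so $a' \in L$ by order-convexity of $L$, and hence $z = a' - b \in \Delta(L)$.

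\textbf{Induction via slicing.} The base case $n = 1$ is immediate: any order-convex body is an interval $[\alpha, \beta]$ with $\Delta(L) = [0, \beta - \alpha]$, giving equality. For $n \ge 2$, slice in the $x_n$-direction, setting $L^t := L \cap \{x_n = t\}$ and viewing it as an order-convex body in $\RR^{n-1}$. Order-convexity of $L$ yields a key slice identity: whenever $a \in L^{t+h}$, $b \in L^t$ and $a \ge b$ (so $h \ge 0$), the comparability $(b, t) \le (a, t+h)$ in $L$ forces the rectangle between them into $L$, so $a \in L^t$ and $b \in L^{t+h}$. Hence
\[
(L^{t+h} - L^t) \cap \RR_+^{n-1} \;=\; \Delta(L^t \cap L^{t+h}),
\qquad
\Delta(L)^h \;=\; \bigcup_t \Delta(L^t \cap L^{t+h}).
\]

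\textbf{Dominant slice and Fubini.} For each $h$ I would choose a dominant parameter $t^*(h)$ with $\Delta(L)^h \subseteq \Delta(L^{t^*(h)} \cap L^{t^*(h) + h})$; by the inductive hypothesis this yields $\vol_{n-1}(\Delta(L)^h) \le \vol_{n-1}(L^{t^*(h)} \cap L^{t^*(h) + h})$. Integrating over $h$ and swapping the order of integration,
\[
\vol_n(\Delta(L)) \;\le\; \int_h \vol_{n-1}(L^{t^*(h)} \cap L^{t^*(h) + h}) \, dh \;=\; \int_y \bigl|\{h : t^*(h),\, t^*(h) + h \in [\alpha(y), \beta(y)]\}\bigr| \, dy \;\le\; \int_y \ell(y) \, dy \;=\; \vol_n(L),
\]
where $[\alpha(y), \beta(y)]$ denotes the fiber of $L$ over $y \in \RR^{n-1}$ in the $\mathbf{e}_n$ direction and $\ell(y) = \beta(y) - \alpha(y)$; the inner measure is bounded by $\ell(y)$ because both $t^*(h)$ and $t^*(h) + h$ must lie in the fiber, forcing $h \in [0, \ell(y)]$.

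\textbf{Main obstacle.} The dominance step --- exhibiting a single $t^*(h)$ that witnesses the whole union $\bigcup_t \Delta(L^t \cap L^{t+h})$ --- is the technical heart of the argument. For $n = 2$ the slices $L^t \cap L^{t+h}$ are intervals, so their $\Delta$'s are nested by length and dominance is automatic. In higher dimensions, convexity of $L$ constrains $\{L^t \cap L^{t+h}\}_t$ to vary in a Brunn--Minkowski-controlled way; since $\Delta(L)^h$ is itself convex as a slice of the convex body $\Delta(L)$, one argues that this forces one element of the family to dominate the rest, possibly after a preliminary Steiner symmetrization to reduce to the nested case.
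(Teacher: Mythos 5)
Your preliminary observation, the slice identity, and the Fubini bookkeeping are all fine, and the induction does close for $n=2$. But the step you flag as the "technical heart" --- the existence of a dominant parameter $t^*(h)$ --- is not merely unproven: it is false, and it fails in a way that cannot be repaired even in the weaker volume form $\vol_{n-1}(\Delta(L)^h)\le\sup_t \vol_{n-1}(L^t\cap L^{t+h})$. Take $n=3$ and
\[
L \ = \ \{x\in\RR^3 : x\ge 0,\ x_1\le 3,\ x_3\le 2,\ x_2+x_3\le 2,\ x_1+x_3\ge 1\},
\]
which is order-convex (it is the intersection of a convex down-set with a convex up-set). Its slices are the rectangles $L^t=[\max(0,1-t),3]\times[0,2-t]$, so for $0\le h\le 1$ one gets $M_t:=L^t\cap L^{t+h}=[\max(0,1-t),3]\times[0,2-t-h]$ and $\Delta(M_t)=[0,\min(2+t,3)]\times[0,2-t-h]$. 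The slice $\Delta(L)^h=\bigcup_t\Delta(M_t)$ contains both $(3,0)$ (from $t=1$) and $(0,2-h)$ (from $t=0$), but no single $M_{t}$ has horizontal chord $3$ and vertical chord $2-h$ simultaneously, so no $t^*(h)$ exists; note also that this union is convex even though the family $\{\Delta(M_t)\}_t$ has no maximal element, so convexity of $\Delta(L)^h$ does not force dominance as you hoped. Quantitatively, $\vol_2(\Delta(L)^h)=\tfrac{11}{2}-3h$ while $\sup_t\vol_2(M_t)=4-2h$ on $0\le h\le 1$, so your chain of inequalities would yield $\vol(\Delta(L))\le 4$, whereas in fact $\vol(\Delta(L))=\vol(L)=\tfrac{31}{6}>4$ (this $L$ is actually an equality case of the theorem). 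Thus no choice of $t^*(h)$, and indeed no per-$h$ comparison with a single slice intersection, can work: the inequality only appears after the contributions of different $t$ are aggregated across $h$, which is precisely what a slice-by-slice induction throws away. The proposed rescue by Steiner symmetrization does not help either: symmetrization neither preserves order-convexity nor interacts in any controlled way with $K\mapsto\Delta(K)$.

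For comparison, the paper does not prove this statement at all; it is quoted as a theorem of Bollob\'as, Leader and Radcliffe and used as a black box, so there is no internal argument to measure yours against --- but the example above shows that whatever proof one supplies must be genuinely different from a dominant-slice induction on the dimension.
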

\begin{proof}[Proof of Theorem \ref{thm:bound-of-vol(K+T)}]
    Given two anti-blocking bodies $K, T \subseteq\RRn^n$, we set $L =
    K - T$, which is locally anti-blocking. Applying
    Lemma~\ref{lem:decomposition-sum}, we get that $\vol(L) = \sum_{E\in \Gnc}
    \vol(P_{E}K) \vol(P_{E^\perp}T)$. Again by Lemma
    \ref{lem:locally-ab-sum-decomp}
	\[
        L-L \ = \  \bigcup_{\sigma} L_\sigma + (-L)_\sigma .
    \]
    Since As $L_{(+,\cdots,+)} = K$ and $(-L)_{(+,\cdots, +)}= L_{(-,\cdots,
    -)} = T$, the former decomposition gives $\Delta(L) = K+T$.
    Since $L$ is order-convex, we may apply 
    Theorem~\ref{thm:reverse-kleitman} to conclude
	\[
        \vol(K + T) \ = \ \Delta(L) \ \leq \ \vol(L) \ = \ \sum_{E \in \Gnc}
        \vol(P_{E}K) \vol(P_{E^\perp} T) \ = \ \vol(K - T)  \, .
        \qedhere
	\]
\end{proof}

Theorem \ref{thm:bound-of-vol(K+T)} can be rewritten as
\[
    \sum_{j=0}^n \binom{n}{j} V(K[j],T[n-j]) \ \leq \ \sum_{j=0}^n
    \binom{n}{j}  V(K[j],-T[n-j]),
\]
and with this in hand it is natural to inquire whether the term-by-term inequality holds.
Theorem~\ref{thm:kleitman-bound-on-mixed} claims this to be true  (and will also provide a second proof for Theorem \ref{thm:bound-of-vol(K+T)}) and we
proceed with a lemma necessary for its proof.

\begin{lem}\label{lem:hyp_Steiner}
	Let $K,T \subseteq\R^n$ be two convex bodies and let $H$ be a hyperplane
	such that $P_H K = K \cap H$ and $P_H T = T \cap H$. Let $S_H K, S_H T$ be
	the Steiner symmetral of $K$ and $T$ with respect to $H$. Then
	\[
	V(S_HK[j],S_HT[n-j]) \ \le \ V(K[j],T[n-j]) \, .
	\]
\end{lem}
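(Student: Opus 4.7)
The plan is to realise $S_H K$ and $S_H T$ simultaneously as the midpoint ($s=0$) of a one-parameter shadow system in the direction $\theta$ normal to $H$ whose endpoints ($s=\pm 1$) are $K$ (resp.\ $T$) and its reflection $R_H K$ (resp.\ $R_H T$) across $H$. Shephard's Theorem~\ref{thm:shadow_vol} will then deliver convexity of $s \mapsto V(K_s[j],T_s[n-j])$ on $[-1,1]$, and the rigid-motion invariance of mixed volumes will identify the two endpoint values, closing the argument.

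To construct the system, I would parametrise the fibers of $K$ in direction $\theta$ as $\{h + t\theta : a(h)\le t\le b(h)\}$ for $h \in P_H K$, where $a$ is convex and $b$ is concave on $P_H K$, and introduce the fiber midpoint $m(h):=\tfrac{a(h)+b(h)}{2}$ and half-length $r(h):=\tfrac{b(h)-a(h)}{2}$. For $s\in[-1,1]$ set
\[
    K_s \ := \ \{h+t\theta : h\in P_H K,\ sm(h)-r(h)\le t\le sm(h)+r(h)\}.
\]
Directly from the definition one reads off $K_1 = K$, $K_{-1} = R_H K$, and $K_0 = S_H K$. Rewriting the upper boundary as $sm(h)+r(h) = \tfrac{1+s}{2}b(h) - \tfrac{1-s}{2}a(h)$ presents it, for $s\in[-1,1]$, as a nonnegative combination of the concave function $b$ and the concave function $-a$, hence concave on $P_H K$; the symmetric computation shows the lower boundary is convex. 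Therefore $K_s$ is a convex body for every $s\in[-1,1]$, and it coincides with the image $\{x + sm(P_H x)\theta : x \in S_H K\}$, exhibiting $(K_s)_{s\in[-1,1]}$ as a shadow system in direction $\theta$ with starting body $S_H K$ and bounded function $\alpha(x) := m(P_H x)$. Build $(T_s)$ from $T$ in the same manner.

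Applying Shephard's Theorem~\ref{thm:shadow_vol} to the $n$-tuple consisting of $j$ copies of $K_s$ and $n-j$ copies of $T_s$ gives convexity of $s \mapsto V(K_s[j], T_s[n-j])$ on $[-1,1]$. Evaluating at $s=0=\tfrac{1+(-1)}{2}$ yields
\[
    V(S_H K[j],\, S_H T[n-j]) \ \le \ \tfrac{1}{2}\bigl(V(K[j], T[n-j]) + V(R_H K[j], R_H T[n-j])\bigr),
\]
and since mixed volumes are invariant under rigid motions the right-hand side equals $V(K[j],T[n-j])$, as required.

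The main obstacle is singling out the \emph{correct} shadow system, because the naive interpolations (for instance the Minkowski combination $(1-s)K + s R_H K$) fail to produce $S_H K$ at their midpoint. The right recipe translates each fiber along $\theta$ by the fiber-dependent linear amount $sm(h)$, while keeping its length $2r(h)$ fixed; once one verifies the concavity/convexity of the resulting boundary functions on $s\in[-1,1]$, Shephard's convexity and reflection invariance finish the proof with essentially no further work.
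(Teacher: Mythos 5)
Your proof is correct and takes essentially the same route as the paper: realise $S_H K$ and $S_H T$ as the common midpoint of shadow systems in direction $\theta$ connecting each body to its reflection across $H$, then combine Shephard's convexity (Theorem~\ref{thm:shadow_vol}) with the reflection invariance of mixed volumes. The only (cosmetic) difference is the parametrization: you base the shadow system at $S_H K$ using the fiber-midpoint function $\alpha(x)=m(P_H x)$, while the paper writes $S_H^t K := \{x - \alpha_K(x)\,t\,e_n : x \in K\}$ with $S_H K = S_H^{1/2}K$, a formula whose second description $\{(x,x_n): -t\alpha_K(x)\le x_n\le (1-t)\alpha_K(x)\}$ implicitly assumes $K$ lies in the half-space $\{x_n\ge 0\}$ (true in the anti-blocking application), whereas your construction makes no such assumption and in fact shows the stated hypothesis $P_H K = K\cap H$ is not needed.
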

\begin{proof}
	We may assume that $H = \{ x : x_n = 0\}$. Let $K' := P_H K$ and for $x
	\in K'$, let $\alpha_K(x) := \max \{ a \ge 0 : (x,a) \in K \}$. For $0
	\le t \le 1$ define
	\[
	S^t_H K  := \{ x - \alpha_K(x) t e_n : x \in K \}  \ = \ \{ (x,x_n) \in
	K' \times \RR : -t \alpha_K(x) \le x_n \le (1-t) \alpha_K(x) \} \, .
	\]
	This is a convex body and $S^t_H K$ is a shadow system. Note that $S_H K =
	S^{1/2}_H K$ and $S^1_H K$ is the reflection of $K$ in the hyperplane $H$.
	Applied to $T$, this yields a shadow system $(S_H^t K[j], S_H^t T[n-j])$
	and by Theorem~\ref{thm:shadow_vol}, we get
	\begin{align*}
	V(S_HK[j],S_HT[n-j]) \ &= \ 
	V(S^{1/2}_HK[j],S^{1/2}_HT[n-j]) \\ \ &\le \ 
	\tfrac{1}{2} V(S^{0}_HK[j],S^{0}_HT[n-j]) + 
	\tfrac{1}{2} V(S^{1}_HK[j],S^{1}_HT[n-j]) \\
	\ &= \ 
	V(K[j],T[n-j]) \, . \qedhere
	\end{align*}
\end{proof}

\begin{proof}[Proof of Theorem \ref{thm:kleitman-bound-on-mixed}]
    For an anti-blocking body $K \subseteq\RRn^n$, denote the associated
    $1$-unconditional body by
	\[
        \widehat{K} \ = \  \bigcup_{\sigma \in \PM^n} \sigma K \, .
    \]
    By Lemma~\ref{lem:locally-ab-sum-decomp}, we have that
	\[
        \vol(\widehat{K} + \widehat{T}) \ = \ 
        \sum_{\sigma} \vol(K_\sigma + T_\sigma) \ = \ 
        \sum_{\sigma} \vol(K + T)
    \]
    and thus
	\[
        V(K[j],T[n-j]) \ = \ 
        \frac{1}{2^n}V(\widehat{K}[j],\widehat{T}[n-j]) \ = \
        V(\tfrac{1}{2}\widehat{K}[j],\tfrac{1}{2}\widehat{T}[n-j]).
    \]
	Hence, it will suffice to show that 
	\begin{equation}\label{eq:V-j-of-locally-ab}
        V(\tfrac{1}{2}\widehat{K}[j],\tfrac{1}{2}\widehat{T}[n-j]) 
        \ \leq \
        V(K[j],-T[n-j]) \, .
	\end{equation}

    Let $H_i = \{ x \in \R^n : x_i = 0 \}$ for $i=1,\dots,n$. Then $K$ and $T$
    have the property that $P_{H_i}K = K \cap H_i$ for all $i=1,\dots,n$.
    Hence, we can repeatedly apply Lemma~\ref{lem:hyp_Steiner} to get
    \[
        V(S_{H_1}S_{H_2}\dots S_{H_n}K[j],
        S_{H_1}S_{H_2}\dots S_{H_n}T[n-j]) \ \le  \
        V(K[j],-T[n-j]) \, .
    \]
    Note that $\frac{1}{2} K \subseteq S_{H_1}S_{H_2}\dots S_{H_n}K$ and since
    $S_{H_1}S_{H_2}\dots S_{H_n}K$ is symmetric with respect to reflections in
    all coordinate hyperplanes $H_i$, we get that $\tfrac{1}{2}\widehat{K}
    \subseteq S_{H_1}S_{H_2}\dots S_{H_n}K$ and, in fact, equality. Appealing
    to the monotonicity of mixed volumes then completes the proof.
\end{proof}

\section{Applications to partially ordered sets}\label{sec:posets}
\newcommand\chain{\mathcal{C}}%
In this section, we collect some of the combinatorial consequences
derived from our geometric constructions. 

\newcommand\SymGrp{\mathfrak{S}}%
Recall that a finite \Def{poset} is a pair $P = (V,\preceq)$ where $V$ is
finite and $\preceq$ is a reflexive, anti-symmetric, and transitive binary
relation on $V$. A \Def{linear extension} is a bijection $\ell : V \to [n] :=
\{1,\dots,n\}$ with $n = |V|$ so that $a \prec b$ implies $\ell(a) < \ell(b)$
for all $a,b \in V$. The number $e(P)$ of linear extensions of $P$ is an
important enumerative invariant in the combinatorial theory of posets. 

\newcommand\cG{\overline{G}}%
It turns out~\cite{Sidorenko,TwoPoset} that the number of linear extensions of
$P$ only depends on its \Def{comparability graph} $G(P)$, that is, the
undirected graph on nodes $V$ with edges $uv$ for $u \prec v$ or $v \prec u$.
Comparability graphs as well as \Def{co-comparability graphs} $\cG(P) = (V, \{
uv : u \not\preceq v, v \not\preceq u\})$ are well-studied. Co-comparability
graphs belong to the class of string graphs and they contain all interval
graphs. Those posets whose co-comparability graphs are also comparability
graphs are characterized as follows: Let $\SymGrp_n$ be the permutations on
$[n]$. For   $\pi \in \SymGrp_n$ define the poset $P_\pi =
([n],\preceq_\pi)$ with 
\[
    a \prec_\pi b \quad :\Longleftrightarrow \quad a < b \text{ and } \pi_a <
    \pi_b \, .
\]
\newcommand\opi{{\overline{\pi}}}%
It is easy to see that $\cG(P_\pi) = G(P_{\opi})$ where $\opi := n+1 - \pi_a$
The posets $P_\pi$ are called \Def{two-dimensional} and Trotter~\cite{Trotter}
showed $\cG(P)$ is a comparability graph if and only if $P$ is isomorphic to a
two-dimensional poset.

Sidorenko~\cite{Sidorenko} studied the relation between $e(P_\pi)$ and
$e(P_{\opi})$ and proved the following inequality. To describe the theorem with its equality cases we need one more definition.
Suppose $P$ and $Q$ are posets on disjoint ground sets. Then $P \cup Q$ is
given a partial order by either making elements from $P$ and $Q$ incomparable
or setting $p \prec q$ for all $p \in P$ and $q \in Q$. The former is called
the \Def{parallel composition} while the latter is called the \Def{series
composition}. A poset is called \Def{series-parallel} if it has a single
elements or if it is the series or parallel composition of series-parallel
posets.

\begin{thm}[Sidorenko~\cite{Sidorenko}]\label{thm:sidorenko}
    For any $\pi \in \SymGrp_n$ 
    \[
        e(P_\pi) \cdot e(P_\opi) \ \ge \  n! \, .
    \]
    Equality is attained precisely when $P_\pi$ is series-parallel.
\end{thm}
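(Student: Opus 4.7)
The plan is to encode linear extensions geometrically and deduce the inequality from Saint-Raymond's Theorem~\ref{thm:saint-raymond-ab}. For a poset $P$ on $[n]$, the appropriate geometric object is Stanley's \emph{chain polytope}
$$\chain(P) \ := \ \{ x \in \RRn^n : \textstyle\sum_{i \in C} x_i \le 1 \text{ for every chain } C \text{ of } P \}.$$
Stanley's theorem gives $\vol(\chain(P)) = e(P)/n!$, and the vertices of $\chain(P)$ are the characteristic vectors $\chi_A$ of antichains $A \subseteq P$. Since the defining inequalities are coordinatewise monotone, $\chain(P)$ is an anti-blocking body.

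The crux of the proof is the identity
$$A\chain(P_\pi) \ = \ \chain(P_\opi),$$
which uses two-dimensionality in an essential way. To establish it, I would use the vertex description of $\chain(P_\pi)$ to write $A\chain(P_\pi) = \{y \in \RRn^n : \sum_{i \in A} y_i \le 1 \text{ for every antichain } A \text{ of } P_\pi\}$. Two indices $i<j$ are incomparable in $P_\pi$ precisely when $\pi_i > \pi_j$, i.e.\ $\opi_i < \opi_j$, which is exactly the condition $i \prec_\opi j$. Hence antichains of $P_\pi$ coincide with chains of $P_\opi$, and the inequality systems defining $A\chain(P_\pi)$ and $\chain(P_\opi)$ agree.

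With this identity in hand, applying Saint-Raymond's Theorem~\ref{thm:saint-raymond-ab} to $\chain(P_\pi)$ yields
$$\frac{e(P_\pi)}{n!} \cdot \frac{e(P_\opi)}{n!} \ = \ \vol(\chain(P_\pi)) \cdot \vol(A\chain(P_\pi)) \ \ge \ \frac{1}{n!},$$
which rearranges to the desired inequality $e(P_\pi) \cdot e(P_\opi) \ge n!$.

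For the equality case, Theorem~\ref{thm:saint-raymond-ab} asserts that equality holds iff $\chain(P_\pi)$ is a reduced Hanner polytope. The remaining task is to match the recursive $\{\times, \vee\}$-construction of reduced Hanner polytopes with the series-parallel decomposition of posets: parallel composition $P_1 \parallel P_2$ gives $\chain(P_1 \parallel P_2) = \chain(P_1) \times \chain(P_2)$ since chain constraints on disjoint coordinates decouple; series composition $P_1 \oplus P_2$ gives $\chain(P_1 \oplus P_2) = \chain(P_1) \vee \chain(P_2)$ since every chain of $P_1 \oplus P_2$ splits as a chain of $P_1$ together with one of $P_2$, so the defining inequalities read $\max_{C_1}\sum_{i \in C_1} x_i + \max_{C_2}\sum_{j \in C_2} y_j \le 1$, which is exactly the description of the anti-blocking join. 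The main obstacle is conceptual rather than technical: recognizing that anti-blocking polarity of chain polytopes swaps chains with antichains and hence swaps $P_\pi$ with $P_\opi$. Once this is seen, both the inequality and its equality case drop out of Saint-Raymond.
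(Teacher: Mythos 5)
Your proof is correct and takes essentially the same approach as the paper: encode $e(P)$ via the chain polytope, establish the key identity $A\Chain(P_\pi) = \Chain(P_\opi)$, and invoke Saint-Raymond. The one place you diverge is in how the identity is obtained. The paper routes it through stable set polytopes of perfect graphs, citing Lov\'asz's theorem $A\Stab_G = \Stab_{\overline{G}}$ together with $\Chain(P) = \Stab_{G(P)}$ and $\overline{G(P_\pi)} = G(P_\opi)$. You instead compute the anti-blocking dual directly from Stanley's vertex description of $\Chain(P_\pi)$ (vertices are antichain indicators) and then use the elementary observation that, for two-dimensional posets, antichains of $P_\pi$ are exactly chains of $P_\opi$. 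This bypasses the perfect-graph machinery and is a bit more self-contained for the two-dimensional case, though the underlying content is the same; the paper's route has the virtue of identifying the general principle (comparability graphs are perfect) that makes the dual polytope again a chain polytope. Your sketch of the equality case (parallel composition $\leftrightarrow$ product, series composition $\leftrightarrow$ anti-blocking join) gives the forward direction of the correspondence between series-parallel posets and reduced Hanner chain polytopes, which is the same level of detail the paper offers; strictly speaking both leave the converse (reduced Hanner $\Rightarrow$ series-parallel) to the reader.
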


\newcommand\Prob{\mathbb{P}}%
Bollob\'as, Brightwell, Sidorenko~\cite{BBS}, in addition to giving a simple
geometric proof of Theorem~\ref{thm:sidorenko} (see below), gave the following
interpretation: Consider the uniform probability distribution on $\SymGrp_n$.
For $\pi \in \SymGrp_n$, let $X_\pi$ be the random variable that is $=1$ for
linear extensions of $P_\pi$.  That is, $\Prob(X_\pi) = \frac{e(P_\pi)}{n!}$.
The identity permutation is the unique permutation that is simultaneously a linear
extension of $P_\pi$ and $P_\opi$, which shows $\Prob(X_\pi \wedge X_\opi) =
\frac{1}{n!}$.  Theorem~\ref{thm:sidorenko} then states that the pair $X_\pi,
X_\opi$ is \emph{negatively correlated}:
\[
    \Prob(X_\pi \wedge X_\opi) \ \le \ \Prob(X_\pi) \cdot \Prob(X_\opi) \, .
\]
In the following, we will generalize Sidorenko's inequality.

For $0 \le j \le n$ and a permutation $\tau \in \SymGrp_n$ set $J :=
\tau^{-1}([j])$. We define $L_j\tau : J \to \{1,\dots,j\}$ to be the restriction
of $\tau$ to $J$ and $R_j\tau : J^c \to \{1,\dots,n-j\}$ by $R_j\tau(a) :=
\tau(a) - j$. A \Def{double poset} is a pair of posets $(P,Q)$ on the same
ground set $[n]$.  Double posets were introduced by Malvenuto and
Reutenauer~\cite{MR} and studied from a geometric point of view
in~\cite{Sanyal-ABbodies}. For a subset $J \subseteq [n]$ we denote by $P|_J$
the restriction of the partial order $\preceq_P$ to $J$.

Now, for a double poset $(P,Q)$ and $0 \le j \le n$, we define $e_j(P,Q)$ to
be the number of permutations $\tau \in \SymGrp_n$ such that $L_j\tau$ is a linear
extension for $P|_J$ and $R_j\tau$ is a linear extension for $Q|_{J^c}$.

\newcommand\osig{{\overline{\sigma}}}%
\begin{thm}[Mixed Sidorenko inequality] \label{thm:gen_sidorenko}
    Let $\pi, \sigma \in \SymGrp_n$ and $0 \le j \le n$. Then
    \[
        e_j(P_\pi,P_\sigma) \cdot e_j(P_\opi,P_\osig) \ \ge \ n! \binom{n}{j} \, .
    \]
\end{thm}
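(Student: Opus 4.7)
The plan is to reinterpret both sides of the inequality as mixed volumes of chain polytopes and apply Theorem~\ref{thm:mahler-for-vj}. For a finite poset $P$ on $[n]$, recall Stanley's \emph{chain polytope}
\[
\chain(P) \ := \ \{x \in \RRn^n : x_{a_1} + \cdots + x_{a_k} \le 1 \text{ whenever } a_1 \prec_P \cdots \prec_P a_k\},
\]
which is a full-dimensional anti-blocking body with $\vol_n(\chain(P)) = e(P)/n!$. Because $\chain(P)$ is anti-blocking, for the coordinate subspace $E = \mathrm{span}(e_a : a \in J)$ with $|J| = j$ one has $P_E\chain(P) = \chain(P) \cap E = \chain(P|_J)$, hence $\vol_j(P_E\chain(P)) = e(P|_J)/j!$ and $\vol_{n-j}(P_{E^\perp}\chain(P)) = e(P|_{J^c})/(n-j)!$.

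The crucial structural input is the polarity identity
\[
A\chain(P_\pi) \ = \ \chain(P_{\opi})
\]
for two-dimensional posets. Indeed, the vertices of $\chain(P_\pi)$ are the indicator vectors of antichains of $P_\pi$, and under anti-blocking polarity these vertices become exactly the (nontrivial) facet normals of $A\chain(P_\pi)$, with right-hand side $1$; since $C \subseteq [n]$ is an antichain of $P_\pi$ precisely when it is a chain of $P_{\opi}$, this facet description of $A\chain(P_\pi)$ coincides with the chain description of $\chain(P_{\opi})$. I expect this to be the main conceptual step, since it is here that the two-dimensional hypothesis enters: it is what links the anti-blocking polarity $K \mapsto AK$ to the order-theoretic involution $\pi \leftrightarrow \opi$, and everything downstream is a translation between geometry and combinatorics.

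With $A\chain(P_\pi) = \chain(P_{\opi})$ and $A\chain(P_\sigma) = \chain(P_{\osig})$ in hand, the finish is a short computation. Applying Corollary~\ref{cor:decomposition-conv}\,(\ref{eq:jmixformula}) to $K = \chain(P_\pi)$ and $T = \chain(P_\sigma)$ gives
\[
V(\chain(P_\pi)[j], -\chain(P_\sigma)[n-j]) \ = \ \binom{n}{j}^{-1}\!\!\sum_{|J|=j}\frac{e(P_\pi|_J)}{j!}\cdot\frac{e(P_\sigma|_{J^c})}{(n-j)!} \ = \ \frac{e_j(P_\pi,P_\sigma)}{n!},
\]
and the analogous identity holds after replacing $(\pi,\sigma)$ by $(\opi,\osig)$. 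Substituting both into the mixed Saint-Raymond inequality of Theorem~\ref{thm:mahler-for-vj} produces
\[
\frac{e_j(P_\pi,P_\sigma)\cdot e_j(P_{\opi},P_{\osig})}{(n!)^2} \ \ge \ \frac{1}{j!(n-j)!},
\]
which rearranges to the claimed bound $e_j(P_\pi,P_\sigma)\cdot e_j(P_{\opi},P_{\osig}) \ge n!\binom{n}{j}$.
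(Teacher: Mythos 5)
Your proof is correct and follows the same overall strategy as the paper's: reinterpret $e_j(P_\pi,P_\sigma)/n!$ as the mixed volume $V(\chain(P_\pi)[j],-\chain(P_\sigma)[n-j])$ via the decomposition of Corollary~\ref{cor:decomposition-conv}, establish the polarity identity $A\chain(P_\pi)=\chain(P_{\opi})$, and then invoke inequality~\eqref{eq:mixed-sr-simple} of Theorem~\ref{thm:mahler-for-vj}.

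The one point of divergence is how you establish $A\chain(P_\pi)=\chain(P_{\opi})$. The paper obtains it from Lov\'asz's theorem: $\chain(P)=\Stab_{G(P)}$, comparability graphs are perfect, $A\Stab_G=\Stab_{\overline G}$ for perfect $G$, and $\overline{G(P_\pi)}=G(P_{\opi})$ by two-dimensionality. You instead argue directly from Stanley's vertex description of $\chain(P_\pi)$ (indicator vectors of antichains) together with the elementary observation that, for a two-dimensional poset, a set is an antichain of $P_\pi$ if and only if it is a chain of $P_{\opi}$. Both routes are valid. Yours is more elementary and self-contained, avoiding perfect graph theory entirely, but it is tailored to the two-dimensional case; the paper's route via $A\Stab_G=\Stab_{\overline G}$ is the more general mechanism and situates the result within the stable-set-polytope framework the authors rely on throughout that section. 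It would be worth stating explicitly that pairwise comparability of a finite set in a poset implies it is a chain, which is the (easy) step turning ``every two elements of an antichain of $P_\pi$ are comparable in $P_{\opi}$'' into ``an antichain of $P_\pi$ is a chain of $P_{\opi}$.''
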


For $j = 0$ this is exactly Theorem~\ref{thm:sidorenko}.

Before we give the proof, let us recast this into probabilistic terms. For
$\pi,\sigma \in \SymGrp_n$, let us write $X_{\pi,\sigma}$ for the random
variable that is one on all permutations $\tau$ such that $L_j\tau$ is a
linear extension for $P_\pi|_J$ and $R_j\tau$ is a linear extension for
$P_\sigma|_{J^c}$.

\begin{cor}
    For every $\pi,\sigma \in \SymGrp_n$, the pair $X_{\pi,\sigma},
    X_{\opi,\osig}$ is negatively correlated.
\end{cor}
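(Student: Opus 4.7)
The plan is to unwind the definitions and then count the permutations in the intersection event exactly. By definition,
\[
    \Prob(X_{\pi,\sigma}) \ = \ \frac{e_j(P_\pi,P_\sigma)}{n!} \qquad \text{and} \qquad \Prob(X_{\opi,\osig}) \ = \ \frac{e_j(P_\opi,P_\osig)}{n!},
\]
so Theorem~\ref{thm:gen_sidorenko} gives
\[
    \Prob(X_{\pi,\sigma})\cdot \Prob(X_{\opi,\osig}) \ \ge \ \frac{\binom{n}{j}}{n!}.
\]
It therefore suffices to compute $\Prob(X_{\pi,\sigma}\wedge X_{\opi,\osig}) = \binom{n}{j}/n!$ exactly, from which negative correlation follows.

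The key step is to pin down exactly which $\tau \in \SymGrp_n$ lie in the joint event. Fix such a $\tau$ and let $J := \tau^{-1}([j])$. Then $L_j\tau$ must simultaneously be a linear extension of $P_\pi|_J$ and of $P_\opi|_J$, and similarly $R_j\tau$ for $P_\sigma|_{J^c}$ and $P_\osig|_{J^c}$. I will use the observation already exploited before Theorem~\ref{thm:sidorenko}: for any $a<b$ in $[n]$, comparing $\pi_a$ to $\pi_b$ shows that either $a\prec_\pi b$ or $a\prec_\opi b$, so any $a<b$ in $[n]$ are forced to be in the correct order by any common linear extension of $P_\pi$ and $P_\opi$. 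This argument localises to any subset, so the only common linear extension of $P_\pi|_J$ and $P_\opi|_J$ (viewed as bijections $J\to[j]$) is the order-preserving one; similarly for $J^c$.

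Consequently $\tau$ must send $J$ onto $[j]$ order-preservingly and $J^c$ onto $\{j+1,\dots,n\}$ order-preservingly. Hence $\tau$ is uniquely determined by the set $J$, and conversely every $j$-subset $J\subseteq[n]$ yields such a $\tau$, which automatically satisfies both $X_{\pi,\sigma}(\tau)=1$ and $X_{\opi,\osig}(\tau)=1$ since any order-preserving map trivially satisfies every partial-order constraint. This produces exactly $\binom{n}{j}$ permutations, so
\[
    \Prob(X_{\pi,\sigma}\wedge X_{\opi,\osig}) \ = \ \frac{\binom{n}{j}}{n!},
\]
and combining with the previous display finishes the proof.

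The only real obstacle is step two, verifying that the "identity is the unique common linear extension" argument survives restriction to $J$. This is not a difficulty so much as a bookkeeping check: $P_\pi|_J$ and $P_\opi|_J$ are two-dimensional posets on $J$ whose conjugacy relation is inherited from the ambient one, so the dichotomy $a\prec_\pi b$ or $a\prec_\opi b$ for $a<b$ in $J$ is immediate, and the rest goes through verbatim. Notably, the bound is independent of $\pi$ and $\sigma$, which is why the generalized Sidorenko inequality of Theorem~\ref{thm:gen_sidorenko} yields negative correlation with no slack in the intersection probability.
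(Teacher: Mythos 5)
Your proof is correct and follows essentially the same route as the paper's: express $\Prob(X_{\pi,\sigma})$ as $e_j(P_\pi,P_\sigma)/n!$, compute the joint probability to be exactly $\binom{n}{j}/n!$, and invoke Theorem~\ref{thm:gen_sidorenko}. The only difference is that you explicitly justify the count of $\binom{n}{j}$ permutations in the joint event (via the dichotomy $a\prec_\pi b$ or $a\prec_\opi b$ for $a<b$, which localizes to any $J$), whereas the paper leaves this as ``easy to see'' --- a useful bit of bookkeeping to spell out, but not a different argument.
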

\begin{proof}
    Observe that for fixed $0 \le j \le n$, there is a bijection between
    $\SymGrp_n$ and the set of triples $(J,L,R)$ where $J\subseteq [n]$ with
    $|J| = j$ and $L : J \to [j]$ and $R : J^c \to [n-j]$ are bijections.
    Hence
    \[
        e_j(P,Q) \ = \ \sum_J e(P|_J) \cdot e(Q|_{J^c}) \, ,
    \]
    where the sum is over all subsets $J \subseteq [n]$ of cardinality $j$. In
    particular
    \[
        \Prob(X_{\pi,\sigma}) \ = \ \binom{n}{j}^{-1} \sum_{J} 
        \frac{e(P_\pi|_J)}{j!}
        \frac{e(P_\sigma|_{J^c})}{(n-j)!} \ = \ \frac{e_j(P,Q)}{n!} \, .
    \]
    It is easy to see that the number of $\tau \in \SymGrp_n$ such that
    $(L_j\tau,R_j\tau)$ is a pair of linear extensions for $(P_\pi,P_\sigma)$
    and $(P_\opi,P_\osig)$ is $\binom{n}{j}$ and hence 
    \[
        \Prob( X_{\pi,\sigma} \wedge X_{\opi,\osig} ) \ = \
        \frac{\binom{n}{j}}{n!} \ = \ \frac{1}{j! (n-j)!} \, .
    \]
    The claim now follows from Theorem~\ref{thm:gen_sidorenko}.
\end{proof}

\newcommand\dposet{\mathbf{P}}%
\newcommand\Chain{\mathcal{C}}%
For the proof of Theorem~\ref{thm:gen_sidorenko}, recall from~\cite{TwoPoset}
that the \Def{chain polytope} $\Chain(P)$ associated to a poset $P =
([n],\preceq)$ is the anti-blocking polytope given by points $x \in \RRn^n$
with
\begin{equation}\label{eqn:chain}
    x_{a_1} + x_{a_2} + \cdots + x_{a_k} \ \le \ 1 \quad \text{ for all chains
    }
    a_1 \prec a_2 \prec \cdots \prec a_k \, .
\end{equation}
It was shown in~\cite{TwoPoset} that $\vol_n(\Chain(P)) = \frac{e(P)}{n!}$ and
used in~\cite{BBS} to prove Theorem~\ref{thm:sidorenko} as follows.  Let $G =
(V,E)$ be an undirected graph. A set $S \subseteq V$ is \Def{stable} if there
is no pair of nodes $u,v \in S$ with $uv \in E$. We write $1_S \in \{0,1\}^V$
for the characteristic vector of a set $S \subseteq V$. The \Def{stable set
polytope} of $G$ is the anti-blocking polytope
\newcommand\Stab{\mathcal{S}}%
\[
    \Stab_G \ := \ \conv( 1_S : S \subseteq V \text{ stable}) \, .
\]
It is not difficult to verify that $\Chain(P) = \Stab_{G(P)}$. Comparability
graphs are perfect graphs and for perfect graphs $G$ Lov\'{a}sz~\cite{Lovasz}
showed that the anti-blocking dual is
\[
    A\Stab_G \ = \ \Stab_{\overline{G}} \, ,
\]
where $\overline{G} = (V,\{ uv : uv \not \in E \})$ is the \Def{complement
graph} of $G$. Saint-Raymond's inequality~\eqref{eqn:SaintRaymond} now yields
\[
    \frac{e(P_\pi)}{n!} \cdot \frac{e(P_\opi)}{n!} \ = \ 
    \vol \Chain(P_\pi) \cdot 
    \vol A\Chain(P_\pi) \ \ge \ \frac{1}{n!} \, .
\]
The equality cases follow from the observation that $\Chain(P)$ is a
reduced Hanner polytope if and only if $P$ is series-parallel.

\begin{proof}[Proof of Theorem~\ref{thm:gen_sidorenko}]
    Let $P = ([n],\preceq)$ be a poset and $J \subseteq [n]$.  For the
    coordinate subspace $E = \{ x \in \RR^n : x_i = 0 \text{ for } i \not\in J
    \}$, it follows from~\eqref{eqn:chain} that 
    \[
        P_E \Chain(P) \ = \ \Chain(P) \cap E \ = \ \Chain(P|_J) \, .
    \]
    Let $(P,Q)$ be a double poset. From Lemma~\ref{lem:decomposition-sum}, we
    get that for $0 \le j
    \le n$
    \begin{align*}
    V(\chain(P)[j],-\chain(Q)[n-j]) 
        \ &= \ \binom{n}{j}^{-1} \sum_{|J|=j} \Vol \chain(P|_J) \Vol
    \chain(Q|_{J^c}) \\
    \ &= \ \binom{n}{j}^{-1} \sum_{|J|=j} \frac{e(P|_J)}{j!}
    \frac{e(Q|_{J^c})}{(n-j)!} \ = \ \frac{e_j(P,Q)}{n!} \, .
    \end{align*}
    Theorem~\ref{thm:mahler-for-vj}\eqref{eq:mixed-sr-simple} now yields
    \[
    \frac{e_j(P_\pi,P_\sigma)}{n!} \cdot \frac{e_j(P_\opi,P_\osig)}{n!}  =  
        V(\Chain(P_\pi)[j], \Chain(P_\sigma)[n-j]) \cdot 
        V(A\Chain(P_\pi)[j], \Chain(AP_\sigma)[n-j])  \ge  \frac{1}{j!(n-j)!}
        \, . \qedhere
    \]
\end{proof}

We can make more good use of the fact that $e_j(P,Q)$ can be interpreted as a mixed
volume.

\begin{thm}
    Let $(P,Q)$ be a double poset on $n$ elements. Then the sequence
    \[
        e_0(P,Q),e_1(P,Q),\dots,e_n(P,Q) 
    \] 
    is log-concave. 
\end{thm}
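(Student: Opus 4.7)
The plan is to reduce the claim directly to the Alexandrov--Fenchel inequality via the mixed-volume interpretation of $e_j(P,Q)$ established in the proof of Theorem~\ref{thm:gen_sidorenko}.

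First, I would recall from that proof the identity
\[
    e_j(P,Q) \ = \ n! \cdot V(\chain(P)[j], -\chain(Q)[n-j]),
\]
which rewrites the sequence $(e_j(P,Q))_{j=0}^n$, up to a common factor of $n!$, as the sequence of mixed volumes of a fixed pair of convex bodies $K := \chain(P)$ and $T := -\chain(Q)$ in $\RR^n$ with multiplicities $(j, n-j)$.

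Next, I would invoke the Alexandrov--Fenchel inequality (see~\cite{Schneider-book-NEW}), whose standard specialization to mixed volumes of two bodies states that for any convex bodies $K, T \subseteq \RR^n$ and $1 \le j \le n-1$,
\[
    V(K[j], T[n-j])^2 \ \ge \ V(K[j-1], T[n-j+1]) \cdot V(K[j+1], T[n-j-1]).
\]
Applied with $K = \chain(P)$ and $T = -\chain(Q)$, and multiplying through by $(n!)^2$, this yields
\[
    e_j(P,Q)^2 \ \ge \ e_{j-1}(P,Q) \cdot e_{j+1}(P,Q)
\]
for all $1 \le j \le n-1$, which is precisely log-concavity of the sequence.

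There is essentially no obstacle: the real content has already been done in establishing $e_j(P,Q)$ as a mixed volume of a fixed pair of convex bodies via the decomposition in Lemma~\ref{lem:decomposition-sum}. The only caveat worth noting is that even the trivial boundary cases $j=0$ and $j=n$ pose no problem, since the Alexandrov--Fenchel inequality is invoked only in the interior range $1 \le j \le n-1$ where log-concavity is a non-trivial assertion.
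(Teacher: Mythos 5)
Your proposal is correct and is exactly the paper's argument: both reduce to the identity $e_j(P,Q)/n! = V(\Chain(P)[j],-\Chain(Q)[n-j])$ established in the proof of Theorem~\ref{thm:gen_sidorenko} and then apply the two-body specialization of the Alexandrov--Fenchel inequality.
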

\begin{proof}
    The claim follows from the fact that $V(\Chain(P)[j],-\Chain(Q)[n-j])
    = \frac{e_j(P,Q)}{n!}$ and the Alexandrov--Fenchel inequality.  
\end{proof}

Note that if $Q = C_n$ is a chain, then $e_j(P,C_n) = \sum_J e(P|_J)$ where
the sum is over subsets of size $j$. If $Q = A_n$ is an anti-chain, then
$e_j(P,A_n) = (n-j)! e_j(P,C_n)$.  Other inequalities can be derived from, for
example, Minkowski's first inequality on mixed volumes to get
\emph{isoperimetric type} inequalities on $e_j(P,Q)$.

Notice that the sequence $e_j(P) := e_j(P,P)$ is log-concave and
\Def{palindromic}, that is, $e_i(P) = e_{n-i}(P)$ for all $0 \le i \le n$.
Let us briefly remark that the associated polynomial
\[
    E(P,t) \ := \ \sum_{j=0}^n e_j(P) t^{j} 
\]
is \emph{not} real-rooted, as is evident for an $n$-anti-chain $P = A_n$ with
$E(A_n,t) = n!(1 + t + \cdots + t^n)$. This example also shows $E(P,t)$ is
also not $\gamma$-positive; cf.~\cite{Christos}.

Finally, we can also derive lower and upper bounds from our geometric
considerations using the validity of Godberson's conjecture for anti-blocking
polytopes (Theorem~\ref{thm:god-for-closed-down}) and~\eqref{eqn:lower}.
However, these bounds can also be derived from simple combinatorial
considerations.

\begin{cor}
    Let $P$ be a poset on $n$ elements. Then 
    \[
        e(P) \ \le \ e_j(P) \ \le \ \binom{n}{j} e(P) \, .
    \]
    for all $0 \le j \le n$. The lower and upper bound is attained with
    equality precisely when $P$ is an anti-chain and chain, respectively. 
\end{cor}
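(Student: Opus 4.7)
Plan: The plan is to use the geometric dictionary connecting the combinatorics of $P$ to (mixed) volumes of the chain polytope $\chain(P)$. From the proof of Theorem~\ref{thm:gen_sidorenko} we have
\[
\vol_n(\chain(P)) \ = \ \frac{e(P)}{n!} \quad \text{and} \quad V(\chain(P)[j],-\chain(P)[n-j]) \ = \ \frac{e_j(P)}{n!},
\]
so both claimed inequalities reduce to comparisons between the volume and the cross mixed volume of the anti-blocking body $K = \chain(P)$.

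The upper bound is then an immediate application of Theorem~\ref{thm:god-for-closed-down} (Godbersen's conjecture for anti-blocking bodies) with $K = \chain(P)$, yielding $e_j(P) \le \binom{n}{j} e(P)$. The lower bound follows from inequality~\eqref{eqn:lower} applied to the same body, yielding $e(P) \le e_j(P)$. The main content of the proof lies in the equality cases, which require matching the geometric conditions (being a simplex or an axes-parallel box) with the combinatorial structure of $P$ via the identification of the extreme points of $\chain(P)$ with the characteristic vectors $1_A$ of anti-chains $A \subseteq [n]$ of $P$.

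Concretely, the equality case of Theorem~\ref{thm:god-for-closed-down} forces $\chain(P)$ to be a simplex, and by Proposition~\ref{prop:RS-AB-equality} its extreme points are of the form $\mu_i e_i$; hence the only nonempty anti-chains of $P$ are singletons, so $P$ is a chain. For the lower bound, equality in~\eqref{eqn:lower} forces $\chain(P) = P_E \chain(P) \times P_{E^\perp}\chain(P)$ for every $j$-dimensional coordinate subspace $E$, which by the remark following the proof of Theorem~\ref{thm:god-for-closed-down} (passing to $A\chain(P)$ and invoking Proposition~\ref{prop:RS-AB-equality}) is equivalent to $\chain(P)$ being an axes-parallel box; thus every subset of $[n]$ is an anti-chain, so $P$ is an anti-chain. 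The converses ($\chain(P)$ being the standard simplex for a chain and the unit cube for an anti-chain) are direct computations. I finally remark that the lower bound also has a transparent combinatorial proof: each linear extension $\tau$ of $P$ restricts to linear extensions of $P|_J$ and $P|_{J^c}$ with $J = \tau^{-1}([j])$, exhibiting $\tau$ itself as one of the permutations counted by $e_j(P)$ and giving the desired injection directly.
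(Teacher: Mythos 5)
Your proposal is correct and follows exactly the route the paper sketches for this corollary: the paper points to Theorem~\ref{thm:god-for-closed-down} and inequality~\eqref{eqn:lower} applied to $K=\chain(P)$ (using the mixed-volume identity $V(\chain(P)[j],-\chain(P)[n-j])=e_j(P)/n!$ from the proof of Theorem~\ref{thm:gen_sidorenko}), and separately notes a combinatorial derivation, which is the injection you describe at the end. You have additionally spelled out the equality cases, which the paper leaves implicit; your matching of the simplex case to chains (since $0,e_1,\dots,e_n$ are always vertices of $\chain(P)$) and of the box case to anti-chains is correct and is the right way to pass from the geometric equality conditions of Theorem~\ref{thm:god-for-closed-down} and the remark following Proposition~\ref{thm:cube-is-only-sym-AB} to the combinatorial statement.
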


Let us briefly remark on a different perspective on Sidorenko's inequality.
The \Def{weak (Bruhat) order} on $\SymGrp_n$ is given by 
\[
    \sigma \le^{wo} \pi \quad \Longleftrightarrow \quad 
    I(\sigma) \subseteq I(\pi) \, ,
\]
where $I(\pi) := \{ (\pi_i,\pi_j) : i < j, \pi_i > \pi_j\}$ is the
\Def{inversion set} of $\pi$. The minimum and maximum of $\SymGrp_n$ with
respect to the weak order are given by $\hat{0} = (1,2,\dots,n)$ and $\hat{1}
= (n,n-1,\dots,1)$. We write $[\sigma,\pi]_{wo}$ for the interval between
$\sigma$ and $\pi$. Bj\"{o}rner and Wachs~\cite{BW} proved the following
relation between the weak order and two-dimensional posets.

\begin{prop}
    Let $\pi,\sigma \in \SymGrp_n$. Then $\sigma$ is a linear extension of
    $P_\pi$ if and only if $\sigma \le^{wo} \pi$. In particular, $e(P_\pi) =
    \# [\hat{0},\pi]_{wo}$.
\end{prop}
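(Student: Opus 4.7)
The plan is a direct verification from the definitions, with a small amount of inversion bookkeeping. First, by the definition of $P_\pi$, the statement ``$\sigma$ is a linear extension of $P_\pi$'' unfolds to: whenever $a<b$ and $\pi_a<\pi_b$, one has $\sigma_a<\sigma_b$. Passing to the contrapositive, this is the same as: whenever $a<b$ and $\sigma_a>\sigma_b$, one also has $\pi_a>\pi_b$. In words, every pair of positions $(a,b)$ witnessing an inversion of $\sigma$ also witnesses an inversion of $\pi$.

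The second step is to recognize this containment of position-pair inversions as the weak-order relation $\sigma\le^{wo}\pi$. The paper's set $I(\rho)$ records the value pairs $(\rho_a,\rho_b)$ with $a<b$ and $\rho_a>\rho_b$, which is the image of the position-inversion set of $\rho$ under the bijection $(a,b)\mapsto(\rho_a,\rho_b)$. Containment of position-inversion sets and containment of value-inversion sets coincide up to the involution $\rho\mapsto\rho^{-1}$ of $\SymGrp_n$, which is an anti-isomorphism of the weak order fixing $\hat{0}$, so either formulation yields the desired equivalence ``$\sigma$ is a linear extension of $P_\pi$ iff $\sigma\le^{wo}\pi$''.

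Finally, the ``in particular'' statement is immediate: $\hat{0}$ is the minimum of $(\SymGrp_n,\le^{wo})$, so $[\hat{0},\pi]_{wo}=\{\sigma\in\SymGrp_n:\sigma\le^{wo}\pi\}$, which by the above equivalence has cardinality $e(P_\pi)$. The main obstacle in writing out the argument cleanly is just being consistent about left-vs-right (equivalently, position-vs-value) inversion conventions and where the involution $\rho\mapsto\rho^{-1}$ is absorbed; no nontrivial combinatorial content beyond the definitions is required.
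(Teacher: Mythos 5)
Your first step is correct and is the real content of the result: by definition, $\sigma$ is a linear extension of $P_\pi$ if and only if every position-pair $(a,b)$ with $a<b$ and $\sigma_a>\sigma_b$ also satisfies $\pi_a>\pi_b$, that is, $\{(i,j): i<j,\ \sigma_i>\sigma_j\}\subseteq\{(i,j): i<j,\ \pi_i>\pi_j\}$. The gap is in your second step. You claim that position-pair containment and the paper's value-pair containment $I(\sigma)\subseteq I(\pi)$ ``coincide up to the involution $\rho\mapsto\rho^{-1}$, which is an anti-isomorphism of the weak order fixing $\hat{0}$, so either formulation yields the desired equivalence.'' That is not so: $\rho\mapsto\rho^{-1}$ is \emph{not} an anti-automorphism of $\le^{wo}$; it is an order-isomorphism between the two distinct weak orders on $\SymGrp_n$ (the one built from position-inversions and the one built from value-inversions). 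Unwinding the definitions, $I(\sigma)\subseteq I(\pi)$ is equivalent to position-inversion containment for the \emph{pair} $(\sigma^{-1},\pi^{-1})$, not for $(\sigma,\pi)$, and even an actual anti-automorphism would flip the roles of $\sigma$ and $\pi$ rather than undo the inversion — either way the conclusion ``$\sigma\le^{wo}\pi$'' does not follow.

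In fact the biconditional in the proposition is false as literally written with the paper's $I(\pi)=\{(\pi_i,\pi_j):i<j,\ \pi_i>\pi_j\}$. For $n=3$, $\pi=(2,3,1)$, $\sigma=(1,3,2)$: the only relation of $P_\pi$ is $1\prec 2$, and $\sigma_1=1<3=\sigma_2$, so $\sigma$ is a linear extension; yet $I(\sigma)=\{(3,2)\}\not\subseteq\{(2,1),(3,1)\}=I(\pi)$, so $\sigma\not\le^{wo}\pi$. What your step one actually establishes is the corrected statement: $\sigma$ is a linear extension of $P_\pi$ if and only if $\sigma\le^{wo}\pi$ with $I(\pi)$ taken to be the \emph{position} inversion set $\{(i,j):i<j,\ \pi_i>\pi_j\}$ (equivalently, $\sigma^{-1}\le^{wo}\pi^{-1}$ in the paper's convention). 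The ``in particular'' cardinality identity $e(P_\pi)=\#[\hat 0,\pi]_{wo}$ does survive under the paper's convention, but one more observation is needed to recover it: the map $a\mapsto\pi_a$ is a poset isomorphism $P_\pi\to P_{\pi^{-1}}$, so $e(P_\pi)=e(P_{\pi^{-1}})$, and the corrected equivalence applied to $\pi^{-1}$ together with the bijection $\sigma\mapsto\sigma^{-1}$ then gives $e(P_{\pi^{-1}})=\#\{\sigma:I(\sigma)\subseteq I(\pi)\}=\#[\hat 0,\pi]_{wo}$.
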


Sidorenko's inequality (Theorem~\ref{thm:sidorenko}) is thus equivalent to

\begin{cor}
    For every $\pi \in \SymGrp_n$, we have 
    \[
        \# [\hat{0},\pi]_{wo} \cdot
        \# [\pi,\hat{1}]_{wo} \ \ge \ n! \, .
    \]
\end{cor}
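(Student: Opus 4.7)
The plan is to combine the Björner–Wachs identification with an order-reversing involution on the symmetric group, and then invoke Sidorenko's inequality in the form already established in the paper. The key observation is that the ``bar'' operation $\pi \mapsto \opi$ is precisely left multiplication by the longest element $w_0 = (n,n-1,\dots,1)$, viewed as a permutation; indeed $(w_0\pi)_a = n+1-\pi_a = \opi_a$.

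First I would verify that left multiplication $\sigma \mapsto w_0\sigma$ is an order-reversing involution on the weak order. For positional inversions $I(\sigma) = \{(i,j): i<j,\ \sigma_i > \sigma_j\}$, a direct check gives $I(w_0\sigma) = \binom{[n]}{2}\setminus I(\sigma)$, so $I(\sigma)\subseteq I(\tau)$ if and only if $I(w_0\tau)\subseteq I(w_0\sigma)$. In particular $w_0$ swaps $\hat 0$ and $\hat 1$, and sends $\opi$ to $\pi$.

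Second, applying this involution to the interval $[\hat 0,\opi]_{wo}$ gives a bijection with $[\pi,\hat 1]_{wo}$, so
\[
    \#[\pi,\hat 1]_{wo} \ = \ \#[\hat 0,\opi]_{wo} \, .
\]
By the Björner–Wachs proposition just stated, the right-hand side equals $e(P_\opi)$, and likewise $e(P_\pi) = \#[\hat 0,\pi]_{wo}$. Finally, Sidorenko's inequality (Theorem~\ref{thm:sidorenko}) yields
\[
    \#[\hat 0,\pi]_{wo}\cdot\#[\pi,\hat 1]_{wo} \ = \ e(P_\pi)\cdot e(P_\opi) \ \ge \ n! \, ,
\]
which is exactly the desired bound. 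There is no real obstacle here: once the dictionary $\opi = w_0\pi$ and the order-reversing behavior of left multiplication by $w_0$ are recorded, the corollary is a one-line consequence of Theorem~\ref{thm:sidorenko}.
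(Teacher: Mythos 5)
Your proof is correct and is essentially the argument the paper leaves implicit: Bj\"orner--Wachs gives $\#[\hat 0,\pi]_{wo}=e(P_\pi)$, the order-reversing involution $\tau\mapsto w_0\tau$ on the weak order identifies $[\pi,\hat 1]_{wo}$ with $[\hat 0,\opi]_{wo}$, and then Theorem~\ref{thm:sidorenko} finishes. One tiny remark: you use positional inversion sets $\{(i,j):i<j,\ \sigma_i>\sigma_j\}$ while the paper records value-pairs $\{(\sigma_i,\sigma_j):i<j,\ \sigma_i>\sigma_j\}$; the literal complementation $I(w_0\sigma)=\binom{[n]}{2}\setminus I(\sigma)$ holds for your convention, while for the paper's the complement appears only up to the relabelling $(c,d)\mapsto(n+1-d,n+1-c)$, but in either case $\sigma\mapsto w_0\sigma$ reverses the order and the argument is unaffected.
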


There has been considerable interest in refined versions of this inequality;
see, for example, \cite{Wei,MPP} and, in particular,~\cite{GG}. It would be
very interesting to find an interpretation of Theorem~\ref{thm:gen_sidorenko}
in this setting.

\section{$C$-anti-blocking bodies and
decompositions}\label{sec:C-ab}

Anti-blocking bodies are defined relative to the cone $\RRn^n$.  In this
section, we develop anti-blocking bodies for other cones $C$ and we show that
analogous decompostion results hold. The section is self-contained and may be
read independently of the rest of the paper.

\subsection{$C$-anti-blocking bodies}
Let $C \subseteq \R^n$ be a full-dimensional and pointed polyhedral convex
cone. The \Def{polar cone} of $C$ is
\[
    C^\vee \ := \ \{ y \in \R^n : \iprod{y}{x} \ge 0 \text{ for all } x \in C
    \} \, , 
\]
which is again full-dimensional and pointed. We denote by $\preceq$ the
partial order on $\R^n$ induced by the \emph{dual} cone: for $x,y \in \R^n$ we
set $x \preceq y$ if $y - x \in C^\vee$. A convex body $K \subseteq C$ is
called \Def{$C$-anti-blocking} 
if for every $y \in K$ and $x \in C$ 
\[
    x \preceq y \quad \Longrightarrow \quad x \in K \, .
\]
If $C = \RRn^n$, then $C^\vee = C$ and $\RRn^n$-anti-blocking bodies are
exactly the usual (full-dimensional) anti-blocking bodies. In this subsection,
we develop the basics of $C$-anti-blocking bodies analogous to the treatment
in~\cite{Schrijver}. Although our results hold in greater generality, we make
the simplifying assumption that $C \subseteq C^\vee$ throughout.

\newcommand\Cdown[1]{\{#1\}^\downarrow_C}%
\newcommand\maxCvee{\mathrm{max}_{C^\vee}}%
For any compact set $U \subseteq C$
\[
    \Cdown{U} \ := \ C \cap (\conv(U)  - C^\vee ) 
\]
is a $C$-anti-blocking body. In particular, if $K$ is $C$-anti-blocking with
extreme points $V$, let $\maxCvee(V) \subseteq V$ be the points maximal with
respect to $\preceq$. Then $K  = \Cdown{\maxCvee(V)}$.

\begin{prop}
    The class of $C$-anti-blocking bodies is closed under intersection, convex
    hull, and Minkowski sums.
\end{prop}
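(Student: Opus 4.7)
The plan is to handle the three operations separately: intersection is immediate, the convex hull reduces to closure under Minkowski sum, and Minkowski sum is the substantive step, reducing in turn to a cone-theoretic decomposition lemma.

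For the intersection, $K_1 \cap K_2$ is a convex body contained in $C$, and for any $y \in K_1 \cap K_2$ and $x \in C$ with $x \preceq y$, the $C$-anti-blocking property of each $K_i$ forces $x \in K_i$, hence $x \in K_1 \cap K_2$. For the convex hull, use the identity $K_1 \vee K_2 = \bigcup_{0 \le \lambda \le 1}(\lambda K_1 + (1-\lambda)K_2)$; a direct check shows that $\lambda K$ is $C$-anti-blocking whenever $K$ is and $\lambda \ge 0$ (using that $C$ and $C^\vee$ are cones), so, granted the Minkowski case, each piece in the union is $C$-anti-blocking. Then if $y \in K_1 \vee K_2$ lies in such a piece and $x \in C$ satisfies $x \preceq y$, $x$ lies in the same piece and hence in $K_1 \vee K_2$.

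For the Minkowski sum, clearly $K_1 + K_2 \subseteq C + C = C$ since $C$ is a convex cone. Given $y = y_1 + y_2 \in K_1 + K_2$ with $y_i \in K_i$ and $x \in C$ with $x \preceq y$, the argument reduces to the following decomposition lemma: if $y_1, y_2, x \in C$ and $y_1 + y_2 - x \in C^\vee$, then there exist $x_1, x_2 \in C$ with $x_1 + x_2 = x$ and $y_i - x_i \in C^\vee$ for $i = 1, 2$. Equivalently, the order intervals $[0,y]_C := C \cap (y - C^\vee)$ satisfy $[0,y_1]_C + [0,y_2]_C = [0, y_1 + y_2]_C$. Granting the lemma, the $C$-anti-blocking property of each $K_i$ yields $x_i \in K_i$ and therefore $x = x_1 + x_2 \in K_1 + K_2$.

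The main obstacle is the decomposition lemma itself, which is a Riesz-type property for the ordered cone $(C, \preceq)$. I would prove it via conic LP duality: the desired $x_1$ is sought subject to the four cone constraints $x_1 \in C$, $x - x_1 \in C$, $y_1 - x_1 \in C^\vee$, $x_1 + y_2 - x \in C^\vee$, and a conic Farkas lemma would turn infeasibility into certificates $u_1, u_2 \in C^\vee$ and $w_1, w_2 \in C$ satisfying a linear identity whose contraction against $x$, $y_1$, $y_2$ would contradict $y_1 + y_2 - x \in C^\vee$; the standing hypothesis $C \subseteq C^\vee$ is essential to ensure the relevant cross-inner products have the correct signs. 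An alternative, more hands-on route is to induct on the combinatorial complexity of $C$, starting from the simplicial base case $C = \RRn^n$, where the decomposition is the familiar coordinatewise one used in the proof of Lemma~\ref{lem:locally-ab-sum-decomp}.
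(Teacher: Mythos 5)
Your reduction of the Minkowski-sum case to the Riesz-type decomposition identity $[0,y_1]_C + [0,y_2]_C = [0,y_1+y_2]_C$ is exactly right and isolates the crux of the matter. The difficulty is that this identity is \emph{false} for non-simplicial cones, so neither route you sketch can close the gap: the conic Farkas alternative will merely certify that the system genuinely is infeasible rather than contradict $y_1+y_2-x \in C^\vee$, and there is no induction from the simplicial base case because the property itself breaks as soon as $C$ acquires more facets than dimensions.

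Here is a concrete failure with $C \subseteq C^\vee$. Take $C = \{(a,b,c) \in \RR^3 : 2|a| \le c,\ 2|b| \le c\}$, the cone over a square, so $C^\vee = \{(a,b,c) : |a|+|b| \le 2c\}$ and $C \subsetneq C^\vee$. Let $y_1 = (\tfrac12,\tfrac12,1)$, $y_2 = (\tfrac12,-\tfrac12,1)$, and $x = (-\tfrac12,\tfrac12,1)$, three extreme rays of $C$; then $y_1+y_2-x = (\tfrac32,-\tfrac12,1) \in C^\vee$, so $x \preceq y_1+y_2$. Writing $x_1=(a,b,c)$ and $x_2=x-x_1$, the constraints $x_1,x_2 \in C$ force $a=-c/2$ and $b=c/2$; then $y_1-x_1 \in C^\vee$ forces $c \le 1/2$ while $y_2-x_2 \in C^\vee$ forces $c \ge 2/3$, which is impossible. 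Hence for the $C$-anti-blocking bodies $K_i = C \cap (y_i - C^\vee)$ we have $y_1+y_2 \in K_1+K_2$ and $x \in C$ with $x \preceq y_1+y_2$, yet $x \notin K_1+K_2$, so $K_1+K_2$ is not $C$-anti-blocking. Your convex-hull argument inherits the same gap, since it rests on each slice $\lambda K_1+(1-\lambda)K_2$ being anti-blocking; only the intersection part of your proof stands. For context, the paper states this proposition without proof; the Minkowski-sum and convex-hull closures do hold when $C$ is simplicial, i.e.\ a linear image of $\RRn^n$, which is where the coordinatewise intuition comes from, but not for general polyhedral $C$ under the standing hypothesis $C \subseteq C^\vee$.
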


We call $K$ \Def{proper} if for every $c \in C$ there is $\mu > 0$ such that
$\mu c \in K$. 
\begin{prop}\label{prop:Cab-rep}
    Let $K \subseteq C$ be a convex body. Then $K$ is a proper
    $C$-anti-blocking body if and only if there is a compact $W \subseteq C$
    not contained in a face of $C$ with
	\[
        K \ = \ \{ x \in C : \iprod{w}{x}  \le  1 \text{ for } w \in W \} \, .
	\]
\end{prop}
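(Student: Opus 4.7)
The plan rests on the reformulation that $K \subseteq C$ is $C$-anti-blocking if and only if $K = (K - C^\vee) \cap C$: indeed, if $x = y - c$ with $y \in K$, $c \in C^\vee$, and $x \in C$, then $x \preceq y$ and $x \in C$, so $x \in K$ by anti-blocking; the other inclusion is immediate since $0 \in C^\vee$. For the forward direction I would set $W := K^\circ \cap C$ and verify the three required properties.

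\textbf{Forward direction.} To prove $K = \{x \in C : \iprod{w}{x} \le 1,\ w \in W\}$, the inclusion $\subseteq$ is automatic. For the reverse, given $x \in C \setminus K$, the reformulation above gives $x \notin K - C^\vee$, which is closed convex ($K$ compact, $C^\vee$ polyhedral). Hahn--Banach produces $w' \in \R^n$ with $\iprod{w'}{x} > \sup_{z \in K - C^\vee} \iprod{w'}{z}$; finiteness of the supremum forces $w'$ to be nonnegative on the recession cone $-C^\vee$, so $w' \in (C^\vee)^\vee = C$. Setting $M := \sup_{y \in K}\iprod{w'}{y}$, properness applied to $c = w'$ yields some $\mu > 0$ with $\mu w' \in K$, and $M \ge \mu \|w'\|^2 > 0$ since $w' \ne 0$; hence $w := w'/M \in W$ separates $x$ from $K$. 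For compactness of $W$: by properness, for each $w \in C$ of unit norm, $h_K(w) := \sup_K \iprod{w}{\cdot} > 0$; by continuity on the compact unit sphere of $C$, there is a uniform lower bound $\delta > 0$, so $W \subseteq \{w \in C : \|w\| \le 1/\delta\}$. Finally, for any $w_0 \in \text{int}(C)$, full-dimensionality of $K$ and $C \subseteq C^\vee$ give $h_K(w_0) > 0$, hence $w_0/h_K(w_0) \in W \cap \text{int}(C)$, so $W$ is not contained in any proper face of $C$.

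\textbf{Backward direction.} Given such $W$, closedness, convexity, and $0 \in K$ are clear. For boundedness, suppose $v \in C \setminus \{0\}$ satisfies $\R_+ v \subseteq K$; then $\iprod{w}{v} \le 0$ for all $w \in W$, which combined with $\iprod{w}{v} \ge 0$ (from $w \in C \subseteq C^\vee$, $v \in C$) gives $W \subseteq \{w \in C : \iprod{w}{v} = 0\}$, a proper face of $C$ (as $v \ne 0$ and $C$ is full-dimensional), contradiction. The same face argument yields $\sup_{w \in W} \iprod{w}{c} > 0$ for every $c \in C \setminus \{0\}$, giving properness via $\mu = 1/\sup_{w \in W} \iprod{w}{c}$. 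For anti-blocking: if $y \in K$, $x \in C$, and $x \preceq y$, then $y - x \in C^\vee$, and for any $w \in W \subseteq C$, $\iprod{w}{x} = \iprod{w}{y} - \iprod{w}{y - x} \le 1 - 0 = 1$, so $x \in K$.

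The main obstacle is producing a separating functional \emph{inside} $C$ rather than in $\R^n$ during the forward direction; the essential trick is to separate $x$ not from $K$ itself but from the enlarged set $K - C^\vee$, whose recession cone $-C^\vee$ forces the separating functional into $(C^\vee)^\vee = C$. The rest is bookkeeping of the standing hypothesis $C \subseteq C^\vee$ (used to sign pairings) and the identification of the ``not in a face'' condition with the natural interior/boundary dichotomy on $C$.
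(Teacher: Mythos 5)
Your proof is correct and, in the forward direction, takes a genuinely different (and more complete) route than the paper's. The paper argues locally: it takes a supporting hyperplane at each boundary point $p$ of $K$, distinguishes $p\in\partial C$ from $p$ in the interior of $C$, and uses properness to push the normal direction into $C$ in the latter case; it then leaves the compactness of $W$ and the ``not contained in a face'' condition implicit. You instead fix $W:=K^\circ\cap C$ (which is precisely the $A_CK$ the paper introduces two lines later) and run a single global separation argument against the set $K-C^\vee$, whose recession cone forces the separating functional into $(C^\vee)^\vee=C$. You then verify compactness of $W$ via a uniform lower bound on $h_K$ over the unit sphere of $C$ and exhibit a point of $W$ in the interior of $C$. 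The backward direction is essentially the same in both: the pairing $\iprod{w}{x}=\iprod{w}{y}-\iprod{w}{y-x}\le 1$ gives anti-blocking, and the ``not in a face'' hypothesis yields boundedness and properness; you phrase boundedness through the recession direction $v$ giving a face $\{w\in C:\iprod{w}{v}=0\}$, the paper through a $c\in\conv(W)\cap\mathrm{int}(C)$. Net effect: your argument buys an explicit identification of $W$, explicit compactness, and the explicit face condition, at the small cost of invoking the reformulation $K=(K-C^\vee)\cap C$ and recession-cone reasoning; the paper's argument is shorter but relies on the reader to fill in those verifications.

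One small slip in phrasing: when you say finiteness of $\sup_{z\in K-C^\vee}\iprod{w'}{z}$ forces $w'$ to be \emph{nonnegative} on the recession cone $-C^\vee$, it should be \emph{nonpositive} on $-C^\vee$ (equivalently nonnegative on $C^\vee$); the conclusion $w'\in(C^\vee)^\vee=C$ that you draw is nevertheless the correct one, so this is only a wording issue.
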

\begin{proof}
    To prove sufficency, first observe that $K$ is bounded. Indeed, if $W
    \subseteq C$ is not contained in a face, then there is $c \in \conv(W)$
    that is contained in the interior of $C$ Since $C \subseteq C^\vee$, this
    means that $\{ x \in C : \iprod{c}{x} \le 1\}$ is bounded and contains $K$.

    Now, if $y \in K$ and $x \preceq y$, there is $c
    \in C^\vee$ with $y = x + c$. Hence 
    \[
        \iprod{w}{x} \ = \ \iprod{w}{y} - \iprod{w}{c} \ \le \ 1,
    \]
    as $w \in C$ by assumption. For every $c \in C$, there is $\epsilon > 0$
    such that $\iprod{w}{\epsilon c} < 1$ for all $w \in W$. This shows that
    $K$ is a proper $C$-anti-blocking body.  Finally, for every $c \in C$,
    there is $\epsilon > 0$ such that $\iprod{w}{\epsilon c} < 1$ for all $w
    \in W$. This shows that $K$ is a proper $C$-anti-blocking body.

    To prove necessity, assume $K$ is properly $C$-anti-blocking. For every
    point $p \in \partial K$, there is a hyperplane $H = \{ x : \iprod{w}{x} =
    \delta \}$ supporting $K$ at $p$. Assume that $K \subseteq \{ x :
    \iprod{w}{x} \le \delta\}$. If $p \in \partial C$, then, since $K$ is
    proper, $H$ supports $C$ and we are done. If $p \in \partial K \setminus
    \partial C$, then we may assume $\delta = 1$. Now $p$ is contained in the
    interior of $C \subseteq C^\vee$. Thus, for every $c \in C^\vee$ there is
    $\epsilon > 0$ such that $p - \epsilon c \in K$.  Hence $1 \ge \iprod{w}{p
    - \epsilon c} = 1 - \epsilon \iprod{w}{c}$, which implies that
    $\iprod{w}{c} \ge 0$. Thus $w \in (C^\vee)^\vee = C$.
\end{proof}

Proposition~\ref{prop:Cab-rep} hints at a duality theory for $C$-anti-blocking
bodies akin to that of the usual anti-blocking bodies. Define
\[
    A_CK \ := \ \{ y \in C : \iprod{x}{y} \le 1 \text{ for all } x \in K \}
    \ = \ K\dual \cap C \, .
\]
\begin{lem}
    Let 
    \begin{align*}
        K \ &= \ \Cdown{U}  \ = \ \{ x \in C : \iprod{w}{x} \le 1
    \text{ for } w \in W\}\\
    \intertext{be a proper $C$-anti-blocking body with
    $U,W \subseteq C$. Then}
    A_CK \ &= \ \Cdown{W} \ = \  \{ x \in C : \iprod{u}{x}
    \le 1 \text{ for } u \in U\} 
    \end{align*}
    is also a $C$-anti-blocking body.  In particular $A_C A_C K = K$ for
    any $C$-anti-blocking body $K$.
\end{lem}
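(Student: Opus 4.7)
The plan is to establish the two equalities in the displayed line separately, after which the involution $A_C A_C K = K$ follows immediately by applying the first equality in reverse.

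\textbf{Step 1: the $U$-description of $A_C K$.} I would first show $A_CK = \{x \in C : \iprod{u}{x} \le 1 \text{ for all } u \in U\}$. Every $x \in K = \Cdown{U}$ has the form $x = u - c$ with $u \in \conv(U)$ and $c \in C^\vee$. For any $y \in C$ we have $\iprod{y}{c} \ge 0$ (since $C \subseteq (C^\vee)^\vee$), and so $\iprod{y}{x} \le \iprod{y}{u} \le \max_{u' \in U} \iprod{y}{u'}$ by convexity. Since $U \subseteq K$ (take $c=0$), taking the supremum over $x \in K$ yields $\sup_{x \in K} \iprod{y}{x} = \max_{u \in U} \iprod{y}{u}$, and the claim follows.

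\textbf{Step 2: the $W$-description of $A_C K$.} The inclusion $\Cdown{W} \subseteq A_CK$ is parallel: for $y = w - c$ with $w \in \conv(W)$, $c \in C^\vee$, and any $x \in K$, we use $\iprod{c}{x} \ge 0$ (as $x \in C$) and $\iprod{w}{x} \le 1$ (by convexity and the $W$-constraints defining $K$) to get $\iprod{y}{x} \le 1$. The reverse inclusion $A_C K \subseteq \Cdown{W}$ is the crux of the proof and I would handle it by separation. Suppose for contradiction $y \in A_C K$ but $y \notin L := \conv(W) - C^\vee$; since $L$ is closed and convex ($\conv(W)$ compact, $-C^\vee$ a closed cone), Hahn--Banach yields $z \in \R^n$ with $\iprod{z}{y} > \sup_{p \in L}\iprod{z}{p}$. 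Finiteness of this supremum forces $\iprod{z}{c} \ge 0$ for all $c \in C^\vee$, so $z \in (C^\vee)^\vee = C$, and then $\sup_{p \in L}\iprod{z}{p} = \alpha := \max_{w \in W}\iprod{z}{w}$. Since $K$ is a bounded convex body and $\lambda z \in C$ for all $\lambda \ge 0$, the alternative $\alpha \le 0$ would place $\lambda z$ in $K$ for every $\lambda \ge 0$, forcing $z = 0$ and contradicting $\iprod{z}{y} > \alpha = 0$. Hence $\alpha > 0$, so $z/\alpha \in K$ by the $W$-representation of $K$, which gives $\iprod{y}{z/\alpha} \le 1$, i.e., $\iprod{z}{y} \le \alpha$, a contradiction.

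\textbf{Step 3: involution and verdict on difficulty.} Applying Step 1 to the now-established representation $A_C K = \Cdown{W}$ with $W \subseteq C$ immediately gives
\[
A_C(A_C K) \ = \ \{x \in C : \iprod{w}{x} \le 1 \text{ for all } w \in W\} \ = \ K,
\]
proving $A_C A_C K = K$. The main obstacle is the inclusion $A_C K \subseteq \Cdown{W}$ in Step 2; everything else is bookkeeping that leans on the two representations from Proposition~\ref{prop:Cab-rep} and the standing hypothesis $C \subseteq C^\vee$.
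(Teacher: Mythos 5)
Your proof is correct, and the overall architecture (establish the $U$-description of $A_CK$, establish the $W$-description, deduce the involution) matches the paper's. The place where you diverge is the hard direction $A_CK \subseteq \Cdown{W}$. The paper simply asserts that every $y \in A_CK$ can be written as $\sum_i \mu_i w_i - c$ with $\mu_i \ge 0$, $\sum_i \mu_i \le 1$, $c \in C^\vee$; this is a conic LP duality statement that is left to the reader. You instead run a clean Hahn--Banach separation: assume $y \notin L = \conv(W) - C^\vee$, note $L$ is closed (compact plus closed cone), extract a separating $z$, show finiteness of $\sup_L$ forces $z \in (C^\vee)^\vee = C$, rule out $\max_W\iprod{z}{\cdot} \le 0$ via boundedness of $K$, and then scale $z$ to land in $K$ and derive the contradiction. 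This is in effect a from-scratch proof of exactly the duality fact the paper invokes silently, so your argument is more self-contained, at the cost of being longer. One more difference: the paper separately notes that $A_CK$ is $C$-anti-blocking by invoking Proposition~\ref{prop:Cab-rep} on the $U$-description; you get this for free from $A_CK = \Cdown{W}$ (since $\Cdown{\cdot}$ is always $C$-anti-blocking by construction), which is fine but worth stating explicitly. Also be careful in Step~3: when you apply Step~1 to $\Cdown{W}$ you are using that $W \subseteq C$, which Proposition~\ref{prop:Cab-rep} guarantees, so that is sound.
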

\begin{proof}
    If $K = \conv(V)$, then $A_CK = \{ y \in C : \iprod{y}{v} \le 1 \text{ for
    all } v \in V \}$. For $u,v \in C$ and $w \in C \subseteq C^\vee$, it
    holds that if $v \preceq u$, then $\iprod{w}{u} \le 1$ implies
    $\iprod{w}{v} \le 1$. Hence, we may replace $V$ with any superset of
    $\max_C(V)$ such as $U$. Proposition~\ref{prop:Cab-rep} now shows that
    $A_C K$ is $C$-anti-blocking.    

    Since $W \subseteq C$, it follows that $\Cdown{W}
    \subseteq A_C K$. For the reverse inclusion, we note that every $w \in
    A_C K$ is of the form $w = \sum_i \mu_i w_i - c$ with $c \in C^\vee$,
    $w_i \in W$, $\sum_i \mu_i \le 1$ and $\mu_i \ge 0$ for all $i$. That is,
    $w \in C \cap (\conv(W) - C^\vee) = \Cdown{W}$.
\end{proof}

It would be very interesting to prove an upper bound on the volume product
\[
    \vol K \cdot \vol A_CK \, .
\]
In order to adapt Meyer's proof of Saint-Raymond's inequality, we would need
that $K \cap F$ is $F$-anti-blocking for any inclusion-maximal face $F \subset
C$.  This, however, is the case if and only if the orthogonal projection of
$C$ onto the linear subspace spanned by $F$ is precisely $F$.  This rare
property only holds for certain linear images of $\RRn^n$.

\subsection{Decompositions}
The results of Lemma~\ref{lem:decomposition-sum} turn out to hold in the much
greater generality of $C$-anti-blocking bodies. For a cone $C$ and a face $F
\subseteq C$, let us denote by $F^\diamond = \{ c \in C^\vee : \iprod{c}{x} =
0 \text{ for all } x \in F \} \subseteq C^\vee$ the conjugate face. If $K$ is
$C$-anti-blocking, then define $K_F := K \cap F$. 

\begin{thm}\label{thm:Cab-dissect}
    Let $C$ be a full-dimensional and pointed polyhedral cone.  Let $K
    \subseteq \R^n$ be $C$-anti-blocking and $L \subseteq \R^n$ be
    $C^\vee$-anti-blocking.  Then
    \[
        K + (-L) \ = \ \bigcup_{F \subseteq C} K_F \times (-L_{F^\diamond})
        \quad \text{ and } \quad
        K \vee (-L) \ = \ \bigcup_{F \subseteq C} K_F \vee (-L_{F^\diamond})
        \, . 
	\]
    are dissections. In
    particular,
    \begin{align*}
	    V(K[j], -L[n-j]) \ &= \ {\binom{n}{j}}^{-1}\sum_{\dim F = j}
        \vol_j(K_F) \cdot \vol_{n-j} (L_{F^\diamond}) \\
	\vol(K \vee -L) \ &= \  \sum_{j=0}^n V(K[j],-L[n-j])
 \, .
	\end{align*}
\end{thm}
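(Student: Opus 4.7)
The plan is to follow the arc of Lemma~\ref{lem:decomposition-sum} and Corollary~\ref{cor:decomposition-conv}, with the polyhedral fan generated by faces $F$ of $C$ and their conjugate faces $F^\diamond$ taking the place of the orthant decomposition. The starting point is the following standard polyhedral fact, which I would cite rather than re-prove: the conjugate-face map $F \mapsto F^\diamond$ is a dimension-reversing involution between faces of $C$ and faces of $C^\vee$ with $\mathrm{span}(F) \perp \mathrm{span}(F^\diamond)$ and $\mathrm{span}(F) \oplus \mathrm{span}(F^\diamond) = \R^n$; moreover the cones $\{F - F^\diamond\}_F$ tile $\R^n$ by a complete polyhedral fan, so each $p \in \R^n$ lies in the relative interior of a unique piece $F(p) - F(p)^\diamond$.

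The core of the argument is the dissection $K + (-L) = \bigcup_F K_F + (-L_{F^\diamond})$; the containment $\supseteq$ is immediate. For the reverse inclusion I would start with $p = k_0 - \ell_0$, $k_0 \in K$, $\ell_0 \in L$, and produce a pair $(k', \ell') \in K_F \times L_{F^\diamond}$ with $k' - \ell' = p$ by a \emph{common shift}
\[
    (k_0, \ell_0) \ \longmapsto \ (k_0 - w, \ell_0 - w), \qquad w \in C.
\]
Under the standing assumption $C \subseteq C^\vee$, the vector $w$ lies in both cones, so the $C$-anti-blocking property of $K$ (applicable because $w \in C^\vee$ and $k_0 - w \in C$) and the $C^\vee$-anti-blocking property of $L$ (applicable because $w \in C$ and $\ell_0 - w \in C^\vee$) each keep the shifted point inside its body, while the difference $k_0 - \ell_0 = p$ is preserved. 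The admissible shift set $W = \{ w \in C : k_0 - w \in C,\ \ell_0 - w \in C^\vee \}$ is nonempty, compact, and convex. I would pick $w^*$ maximizing $\iprod{w}{k_0 + \ell_0}$ on $W$ and use first-order optimality against the tangent cones of $C$ at $k_0 - w^*$ and of $C^\vee$ at $\ell_0 - w^*$ to conclude $\iprod{k_0 - w^*}{\ell_0 - w^*} = 0$. This orthogonality forces $k' = k_0 - w^*$ and $\ell' = \ell_0 - w^*$ into a pair of conjugate faces, and uniqueness in the fan decomposition identifies the face as $F = F(p)$. This mirrors the coordinatewise cancellation carried out in Lemma~\ref{lem:locally-ab-sum-decomp} for $C = \RRn^n$.

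Once the Minkowski dissection is in hand, the convex hull dissection follows verbatim as in Lemma~\ref{lem:locally-ab-sum-decomp}: write $K \vee (-L) = \bigcup_{\lambda \in [0,1]} (\lambda K + (1-\lambda)(-L))$, apply the Minkowski dissection at each $\lambda$ (using $(\lambda K)_F = \lambda K_F$), and swap the two unions to collapse to $\bigcup_F K_F \vee (-L_{F^\diamond})$. Disjointness of interiors in both dissections is automatic since $K_F + (-L_{F^\diamond}) \subseteq F - F^\diamond$ and the pieces of the fan have pairwise disjoint (relative) interiors. The volume identities then follow from the orthogonal direct sum $\mathrm{span}(F) \oplus \mathrm{span}(F^\diamond) = \R^n$: for the Minkowski piece,
\[ \vol_n(K_F + t(-L_{F^\diamond})) \ = \ t^{n-\dim F} \, \vol_{\dim F}(K_F) \, \vol_{n-\dim F}(L_{F^\diamond}), \]
and for the join, $\vol_n(K_F \vee (-L_{F^\diamond})) = \binom{n}{\dim F}^{-1} \vol_{\dim F}(K_F) \vol_{n-\dim F}(L_{F^\diamond})$. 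Summing over $F$, grouping by dimension, and comparing with the Minkowski polynomial expansion of $\vol_n(K + t(-L))$ in $t$ yields the mixed volume identity and the volume of the join.

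The main obstacle is the common-shift step. In the case $C = \RRn^n$ the coordinates decouple and the problem reduces to one-dimensional cancellations performed independently in each coordinate direction, as in the proof of Lemma~\ref{lem:locally-ab-sum-decomp}. For a general polyhedral cone there is no coordinate basis simultaneously adapted to every face, and the argument has to proceed globally using the tangent- and normal-cone structure of $C$ together with the anti-blocking properties of \emph{both} $K$ and $L$. The hypothesis $C \subseteq C^\vee$ is precisely what allows a single shift direction $w \in C$ to respect both anti-blocking structures at once; without it, a two-sided procedure (shifting $k_0$ by an element of $C^\vee$ and $\ell_0$ by an element of $C$) would be required, and the preservation of $p = k_0 - \ell_0$ becomes more delicate.
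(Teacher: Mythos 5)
The core difficulty you flag in your final paragraph is real, but the common-shift optimization you propose does not resolve it; the argument breaks for general $C$. The canonical decomposition $p = r - (r-p)$ with $r \in K_F$, $r-p \in L_{F^\diamond}$ is given by the Moreau decomposition $r = \pi_C(p)$, $r - p = \pi_{C^\vee}(-p)$. Expressed as a shift, the vector you would need is $w^\ast = k_0 - \pi_C(p)$, and in general this does \emph{not} lie in $C$, so your feasible set $W$ can fail to contain it, and your claimed orthogonality at the maximizer fails. A concrete counterexample: let $C = \{(x,y): 0 \le y \le x\} \subseteq \R^2$, so $C^\vee = \{(x,y): x \ge 0,\ x+y \ge 0\}$ and $C \subseteq C^\vee$. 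Take any $C$-anti-blocking $K$ containing $k_0 = (1,0)$ and any (proper) $C^\vee$-anti-blocking $L$ containing $\ell_0 = (1,-1)$, so $p = k_0 - \ell_0 = (0,1)$. Then
\[
W = \{ w \in C : k_0 - w \in C,\ \ell_0 - w \in C^\vee\} = \{0\},
\]
because $w \in C$ forces $w_2 \ge 0$ while $k_0 - w \in C$ forces $w_2 \le 0$, hence $w_2 = 0$, and then $\ell_0 - w \in C^\vee$ forces $w_1 \le 0$, hence $w_1 = 0$. At this trivial maximizer, $\iprod{k_0}{\ell_0} = 1 \ne 0$, so orthogonality does not hold, and first-order optimality is vacuous. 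Meanwhile the true decomposition is $(0,1) = (0.5,0.5) - (0.5,-0.5)$ with $F = \text{cone}(1,1)$, $F^\diamond = \text{cone}(1,-1)$, and the required shift $w^\ast = (1,0) - (0.5,0.5) = (0.5,-0.5) \notin C$. So the constraint $w \in C$, which is forced by wanting to apply the anti-blocking property of $L$ to a common shift, is simply too restrictive. (Your observation that the shift stays in $C$ \emph{is} correct for $C = \RRn^n$, where $w^\ast_i = \min((k_0)_i, (\ell_0)_i) \ge 0$; this coordinate-wise cancellation is precisely what fails to generalize.)

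The paper avoids the shift framework altogether. It uses the polyhedral envelope $\widehat{K} = \{x : \iprod{w}{x} \le 1 \text{ for } w \in W_K\}$ from Proposition~\ref{prop:Cab-rep}, proves (Lemma~\ref{lem:Phat}) that $\pi_K(p) = \pi_C(p)$ for every $p \in \widehat{K}$, and observes $K - L \subseteq \widehat{K} \cap (-\widehat{L})$. Membership of $r = \pi_C(p)$ in $K$ then comes for free because $r = \pi_K(p) \in K$ (the nearest-point map lands in $K$ by definition, no anti-blocking argument needed), and likewise $r - p = \pi_{C^\vee}(-p) = \pi_L(-p) \in L$ by the Moreau decomposition. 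This is the global replacement for coordinatewise cancellation that you correctly suspected was needed. I would suggest proving the analogues of Lemma~\ref{lem:C-decomp}, Corollary~\ref{cor:piC}, and Lemma~\ref{lem:Phat} and then following this route. Your treatment of the convex hull dissection and of the volume and mixed-volume identities is fine once the Minkowski dissection is established.
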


We start by recalling the following result. For a closed convex set $F
\subseteq \R^n$, let us write $\pi_F : \R^n \to F$ for the nearest point map.

\begin{lem}\label{lem:C-decomp}
	Let $C \subseteq \R^n$ be a polyhedral cone. Then
	\[
	    \R^n \ = \ \bigcup_F F + (- F^\diamond)
	\]
	where the union is over all faces $F \subseteq C$. Moreover, for distinct
	faces $F,G \subseteq C$, $(F -F^\diamond) \cap (G -G^\diamond)$ is a face
	of both.
\end{lem}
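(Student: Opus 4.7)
The plan is to deduce both assertions from Moreau's conic decomposition applied to the mutually polar closed convex cones $C$ and $-C^\vee$: every $z \in \R^n$ admits a unique decomposition $z = p + q$ with $p \in C$, $q \in -C^\vee$, $\iprod{p}{q} = 0$, and moreover $p = \pi_C(z)$.

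For the covering statement, fix $z \in \R^n$ and let $F$ be the unique face of $C$ whose relative interior contains $p = \pi_C(z)$. I would verify that $-q \in F^\diamond$, which then gives $z = p - (-q) \in F - F^\diamond$. Membership $-q \in C^\vee$ is immediate from Moreau. For the vanishing of $-q$ on $F$, I would argue that for every $x \in F$ both $p \pm \epsilon(x-p)$ still lie in $F \subseteq C$ for all sufficiently small $\epsilon > 0$, so the Moreau inequalities $\iprod{q}{p \pm \epsilon(x-p)} \le 0$ combined with $\iprod{q}{p} = 0$ force $\iprod{q}{x} = 0$.

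For the intersection statement, I would first establish the identity
\[
(F - F^\diamond) \cap (G - G^\diamond) \ = \ (F \cap G) - (F^\diamond \cap G^\diamond)
\]
via Moreau uniqueness. Given $z = p_F - c_F = p_G - c_G$ in the intersection with $p_F \in F$, $p_G \in G$, $c_F \in F^\diamond$, $c_G \in G^\diamond$, the pairs $(p_F, -c_F)$ and $(p_G, -c_G)$ are both Moreau decompositions of $z$ (orthogonality follows since $c_F$ annihilates $F \ni p_F$, and similarly for $G$), so uniqueness forces $p_F = p_G \in F \cap G$ and $c_F = c_G \in F^\diamond \cap G^\diamond$. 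The reverse inclusion is immediate.

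Finally, I would recognise this common value as a face of each of $F - F^\diamond$ and $G - G^\diamond$. Because $F^\diamond$ annihilates $F$, the linear spans of $F$ and $F^\diamond$ are orthogonal, so $F - F^\diamond$ is an orthogonal direct sum of the cones $F$ and $-F^\diamond$, and its faces are precisely the sums $F' - F''$ with $F'$ a face of $F$ and $F''$ a face of $F^\diamond$. Since $F \cap G$ is an intersection of faces of $C$ and hence a face of $C$ (and therefore of $F$), and $F^\diamond \cap G^\diamond$ is analogously a face of $C^\vee$ and of $F^\diamond$ via the classical anti-isomorphism of face lattices under polarity, the identification follows, and symmetry gives the face condition for $G - G^\diamond$. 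I anticipate that the main obstacle is this last face-identification step: once the orthogonal-sum decomposition of $F - F^\diamond$ is in hand, the argument reduces to standard face-lattice manipulations, but setting up the orthogonal decomposition cleanly is where care is needed.
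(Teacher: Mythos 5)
Your proof is correct, and for the covering statement it follows the same route as the paper: both use the nearest-point projection $\pi_C$ (Moreau's decomposition), identify the face $F$ containing $\pi_C(z)$ in its relative interior, and show $\pi_C(z) - z \in F^\diamond$. You are in fact slightly more careful than the paper on one point: the paper asserts that $r = \pi_C(p)$ holds ``if and only if $\iprod{r-p}{x}\ge 0$ for all $x\in C$,'' which omits the complementary slackness $\iprod{r-p}{r}=0$; that condition is precisely what your $p \pm \epsilon(x-p)$ argument extracts and is needed to place $r-p$ in $F^\diamond$ rather than merely in $C^\vee$. The more substantial difference is the second assertion. The paper gives no argument at all for the ``moreover'' claim that $(F-F^\diamond)\cap(G-G^\diamond)$ is a face of each piece, whereas you supply a clean one: Moreau uniqueness yields $(F-F^\diamond)\cap(G-G^\diamond) = (F\cap G) - (F^\diamond\cap G^\diamond)$, and since $\mathrm{span}\,F\perp\mathrm{span}\,F^\diamond$, the cone $F-F^\diamond$ is an orthogonal direct sum whose faces are exactly sums of a face of $F$ with a face of $-F^\diamond$; as $F\cap G$ is a face of $C$ (hence of $F$) and $F^\diamond\cap G^\diamond$ is a face of $C^\vee$ (hence of $F^\diamond$), the conclusion follows. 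This is a genuine and correct completion of the lemma, and it buys exactly what the paper needs when it later invokes the decomposition as a dissection in Theorem~\ref{thm:Cab-dissect}.
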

\begin{proof}
    For $p \in \R^n$ let $r = \pi_C(p)$ be the closest point in $C$. That is,
    $r$ is the unique minimizer of $x \mapsto \|p-x\|^2$ over $C$. It follows
    that this is the case if and only if $\iprod{r-p}{x} \ge 0$ for all $x \in
    C$. That is, $r-p \in C^\vee$. In particular, if $r$ is in the relative
    interior of the face $F$, then $r-p \in F^\diamond$. This yields the
    decomposition.
\end{proof}

\begin{cor} \label{cor:piC}
	Let $C \subseteq \R^n$ be a full-dimensional cone, $p \in \R^n$, and $r \in
	C$. Then $r = \pi_C(p)$ if and only if $r-p \in C^\vee$ with
	$\iprod{r-p}{r} = 0$.
\end{cor}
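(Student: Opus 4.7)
The plan is to treat both implications by direct variational arguments, relying only on the fact that $C$ is a closed convex cone and that $x \mapsto \|x - p\|^2$ is a strictly convex quadratic on $\R^n$.

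For the forward direction, I would start from the fact, already established in the proof of Lemma~\ref{lem:C-decomp}, that $r = \pi_C(p)$ forces $r - p \in C^\vee$. To extract the orthogonality $\iprod{r-p}{r} = 0$, I would exploit the homogeneity of $C$: since $tr \in C$ for every $t \ge 0$, the function $g(t) := \|tr - p\|^2$ is a quadratic in $t$ whose minimum over $t \ge 0$ is attained at $t = 1$. If $r = 0$, the identity $\iprod{r-p}{r} = 0$ is trivial; otherwise $t = 1$ is an interior minimizer of $g$ on $[0,\infty)$, so $g'(1) = 0$, which rearranges to $\iprod{r-p}{r} = 0$.

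For the backward direction, assuming $r \in C$, $r - p \in C^\vee$, and $\iprod{r-p}{r} = 0$, I would verify directly that $r$ minimizes $\|x - p\|^2$ over $x \in C$ by the expansion
\[
\|x - p\|^2 \;=\; \|x - r\|^2 \;+\; 2\iprod{x - r}{r - p} \;+\; \|r - p\|^2.
\]
The hypothesis $\iprod{r-p}{r} = 0$ kills the contribution of $r$ in the middle term, while $r - p \in C^\vee$ together with $x \in C$ gives $\iprod{x}{r - p} \ge 0$. Hence the cross term is nonnegative, so $\|x - p\|^2 \ge \|r - p\|^2$ for all $x \in C$, which is exactly $r = \pi_C(p)$.

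There is no real obstacle here: the corollary is a standard Karush--Kuhn--Tucker style characterization of the metric projection onto a closed convex cone, and both directions reduce to unpacking first-order optimality and using the defining pairing between $C$ and $C^\vee$. The only mild point worth flagging is the degenerate case $r = 0$ in the forward direction, which must be checked separately from the interior-minimizer argument.
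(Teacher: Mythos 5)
Your proof is correct and amounts to the standard first-order optimality characterization of metric projection onto a closed convex cone. The paper leaves the corollary without an explicit proof, presenting it as an immediate byproduct of the proof of Lemma~\ref{lem:C-decomp}; however, that proof only asserts the condition $r - p \in C^\vee$ (and phrases it as an ``if and only if''), which by itself is not a sufficient characterization of $\pi_C(p)$ — one also needs the complementarity $\iprod{r-p}{r} = 0$, precisely as the corollary states. Your argument supplies exactly this missing piece: the forward direction derives the orthogonality from the homogeneity of $C$ via the one-parameter family $t \mapsto \|tr - p\|^2$ (with the degenerate case $r = 0$ correctly treated separately), and the backward direction verifies minimality directly from the expansion $\|x-p\|^2 = \|x-r\|^2 + 2\iprod{x-r}{r-p} + \|r-p\|^2$, using $\iprod{x}{r-p}\ge 0$ for $x \in C$ and $\iprod{r}{r-p} = 0$ to drop the cross term. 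This is the same variational reasoning the paper implicitly relies on, just made explicit and complete.
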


For a $C$-anti-blocking body $K$, let us write
\[
\widehat{K} \ = \ \{ x \in \R^n : \iprod{w}{x} \le 1 \text{ for }
w \in W \} \, ,
\]
where the $W \subseteq C$ is by Proposition~\ref{prop:Cab-rep}.

\begin{lem}\label{lem:Phat}
	Let $K \subseteq \R^n$ be a $C$-anti-blocking body and $p \in \widehat{K}$.
	Then $\pi_K(p) = \pi_C(p)$.
\end{lem}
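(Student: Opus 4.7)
Set $f := \pi_C(p)$. The plan is to establish that $f \in K$; once that is known, the equality $\pi_K(p) = f = \pi_C(p)$ is immediate because $K \subseteq C$ implies that every $y \in K$ satisfies $\|p - y\| \ge \|p - f\|$ with equality only at $y = f$, by uniqueness of the nearest-point projection onto the closed convex set $C$. Thus the entire substance of the lemma is the claim that $\pi_C(p)$ lies in $K$.

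Since $f \in C$ by construction, proving $f \in K$ amounts to verifying $\iprod{w}{f} \le 1$ for every $w \in W$. The main tool is Corollary~\ref{cor:piC}, which gives $f - p \in C^\vee$ together with $\iprod{f - p}{f} = 0$. Let $F$ be the unique face of $C$ whose relative interior contains $f$; then the projection characterization places $f - p$ in the relative interior of the conjugate face $F^\diamond \subseteq C^\vee$. For every $w \in W$ lying in $F = F^{\diamond\diamond}$, the orthogonality $F \perp F^\diamond$ forces $\iprod{w}{f - p} = 0$, and hence $\iprod{w}{f} = \iprod{w}{p} \le 1$ using only that $p \in \widehat{K}$.

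The delicate case is $w \in W$ with $w \notin F$: there $\iprod{w}{f - p} \ge 0$, so the decomposition $\iprod{w}{f} = \iprod{w}{p} + \iprod{w}{f - p}$ runs the wrong way for a direct estimate. The approach I would take is to combine the Moreau-type decomposition $p = f - g$ (with $g = f - p \in C^\vee$ and $\iprod{f}{g} = 0$) with the face dissection of $\R^n$ from Lemma~\ref{lem:C-decomp} and the $C$-anti-blocking structure of $K$, in order to transfer the bound $\iprod{w}{p} \le 1$ from $p$ to $f$ by exploiting the minimality of $f$ relative to $p$ along the face $F$. Concretely, one shows that any $x \in K$ witnessing the hypothesis $p \in \widehat{K}$ dominates $f$ in the order $\preceq$ induced by $C^\vee$, i.e.\ $x - f \in C^\vee$, and then the anti-blocking property of $K$ yields $f \in K$ at once. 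The main obstacle is precisely this domination step $f \preceq x$ in the second case: the inner-product bookkeeping on its own is insufficient, and one must use the orthogonality $\iprod{f}{g} = 0$ together with the face-compatibility of the anti-blocking structure to close the gap; this is the geometric heart of the lemma and where I expect to spend the most effort.
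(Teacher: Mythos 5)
Your proposal goes in the opposite direction from the paper's argument and, as you yourself acknowledge, leaves the central step open. The paper starts from $r := \pi_K(p)$ and verifies the characterization of $\pi_C(p)$ supplied by Corollary~\ref{cor:piC}: the hypothesis $p \in \widehat{K}$, together with the fact that the half-open segment $[p,r)$ avoids $K$, forces $r \in \partial C$ (were $r$ interior to $C$, the segment would have to exit $\widehat{K}$ near $r$, impossible since both $p$ and $r$ lie in the convex set $\widehat{K}$), and then one argues that the hyperplane $\{x : \iprod{r-p}{x} = \iprod{r-p}{r}\}$ supporting $K$ at $r$ in fact supports $C$, which is precisely the assertion $r - p \in C^\vee$ and $\iprod{r-p}{r}=0$. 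Your route instead sets $f := \pi_C(p)$ and tries to show $f \in K$ by checking the $W$-inequalities directly.

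You correctly isolate the obstruction: for $w \in W$ not lying in the face $F$ containing $f$, the inequality $\iprod{w}{f-p} \ge 0$ (valid because $w \in C$ and $f-p \in C^\vee$) runs against the desired bound. But the remedy you sketch does not attach to the hypothesis. The assumption $p \in \widehat{K}$ is a family of inequalities $\iprod{w}{p} \le 1$ for $w \in W$; there is no ``$x \in K$ witnessing $p \in \widehat{K}$'' to dominate. To carry out a domination step $f \preceq x$ one would need a decomposition $p = x - c$ with $x \in K$ and $c \in C^\vee$, and then the nontrivial monotonicity of $\pi_C$ with respect to the order $\preceq$ induced by $C^\vee$ (so that $p \preceq x$ forces $\pi_C(p) \preceq \pi_C(x) = x$). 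You supply neither the decomposition nor the monotonicity, and you explicitly flag this as ``where I expect to spend the most effort.'' As written the proposal is an outline with a hole at exactly the step that constitutes the lemma's content; it is not a proof.
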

\begin{proof}
	Let $r = \pi_K(p)$. Since the segment $[p,r]$ meets $K$ only in $r$, it
	follows that $r \in \partial C$. Thus the hyperplane $H = \{x :
	\iprod{r-p}{x} = \iprod{r-p}{r} \}$ supports $C$ and $r \in H \cap C$.
	This implies $r-p \in C^\vee$ and $\iprod{r-p}{r}
	= 0$ and hence $r = \pi_C(p)$ by Corollary~\ref{cor:piC}.
\end{proof}

\begin{proof}[Proof of Theorem~\ref{thm:Cab-dissect}]
    For $p \in K - L$, let $F \subseteq C$ be the face containing $r$ in the
    relative interior. Lemma~\ref{lem:C-decomp} states that $p \in F -
    F^\diamond$ and we claim that $p \in K_F - L_{F^\diamond}$. 
    
    It suffices to show $r \in K$ and $s := p-r \in -L$. This follows from
    Lemma~\ref{lem:Phat} once we verify $K - L \subseteq \widehat{K} \cap
    (-\widehat{L})$.  Since $L \subseteq C^\vee$ and $\widehat{K}$ is defined
    in terms of $w \in W \subseteq C$, we have 
    \[
        \iprod{w}{r - s} \ = \  \underbrace{\iprod{w}{r}}_{\le 1}  -
        \underbrace{\iprod{w}{s}}_{\ge 0}  \ \le \ 1 \, . 
    \]
    This shows $P - Q \subseteq \widehat{P}$ and the argument for
    $\widehat{Q}$ is analogous. 

    The proof of the decomposition for $K \vee (-L)$ is the same as in
    Lemma~\ref{lem:locally-ab-sum-decomp}: It follows from the fact that $K
    \vee (-L)$ is covered by the sets $\lambda K  + (1-\lambda)(-L)$ for
    $0 \le \lambda \le 1$.

    As for the (mixed) volume computations, observe that $\lambda K -  \mu L$
    for $\lambda, \mu > 0$ is decomposed by $\lambda K_F - \mu L_{F^\diamond}$
    for $F \subseteq C$. Since $F$ and $F^\diamond$ lie in orthogonal
    subspaces, it follows that $\vol(\lambda K_F - \mu L_{F^\diamond}) =
    \lambda^{\dim F} \mu^{n - \dim F} \vol(K_F) \vol(L_{F^\diamond})$.
    Collecting coefficients of $\vol(\lambda K - \mu L)$ yields the volume
    formulas.
\end{proof}

As a corollary to the proof, we note the following.

\begin{cor}
    Let $P$ be $C$-antiblocking and let $Q$ be $C^\vee$-antiblocking. Then
    \[
        P - Q \ = \ \widehat{P} \cap (-\widehat{Q}) \, .
    \]
\end{cor}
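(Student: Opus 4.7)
The plan is to prove the two inclusions separately. The inclusion $P - Q \subseteq \widehat{P} \cap (-\widehat{Q})$ is essentially already isolated in the proof of Theorem~\ref{thm:Cab-dissect}: for $r \in P$ and $s \in Q$ and any $w \in W \subseteq C$ (the defining set for $\widehat{P}$), one has $\iprod{w}{r-s} = \iprod{w}{r} - \iprod{w}{s} \le 1 - 0 = 1$, using $r \in P \subseteq \widehat{P}$ together with $w \in C$, $s \in Q \subseteq C^\vee$, and the polar cone relation $\iprod{w}{s} \ge 0$. The symmetric argument, applied to $Q$ with its defining set $W' \subseteq C^\vee$ and using $r \in P \subseteq C \subseteq (C^\vee)^\vee$, shows $-(r-s) \in \widehat{Q}$.

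For the reverse inclusion, I would take $p \in \widehat{P} \cap (-\widehat{Q})$ and construct an explicit decomposition $p = r - s$ using nearest-point projections, mirroring the approach of Lemmas~\ref{lem:C-decomp} and~\ref{lem:Phat}. Specifically, I would set
\[
    r \ := \ \pi_C(p), \qquad s \ := \ r - p \, .
\]
By Lemma~\ref{lem:C-decomp} and its proof, there is a face $F \subseteq C$ such that $r \in F$ and $s \in F^\diamond \subseteq C^\vee$; in particular $\iprod{s}{r} = 0$ by the definition of the conjugate face.

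It then remains to verify $r \in P$ and $s \in Q$. Since $p \in \widehat{P}$, Lemma~\ref{lem:Phat} applied to $P$ gives $r = \pi_C(p) = \pi_P(p) \in P$. For $s$, I would apply Lemma~\ref{lem:Phat} to the $C^\vee$-anti-blocking body $Q$ at the point $-p \in \widehat{Q}$, obtaining $\pi_Q(-p) = \pi_{C^\vee}(-p)$. The identification $s = \pi_{C^\vee}(-p)$ is the check to make: by the analogue of Corollary~\ref{cor:piC} for the cone $C^\vee$, I need $s \in C^\vee$, $s - (-p) \in (C^\vee)^\vee = C$, and $\iprod{s - (-p)}{s} = 0$. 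The first two hold since $s \in F^\diamond$ and $s + p = r \in F \subseteq C$, and the third is precisely $\iprod{r}{s} = 0$, already established above. Thus $s = \pi_Q(-p) \in Q$, and $p = r - s \in P - Q$.

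The main conceptual step is recognizing that the cone decomposition $p = r + (-s)$ coming from $\pi_C$ is automatically compatible with \emph{both} antiblocking bodies simultaneously, via the orthogonality $\iprod{r}{s} = 0$ that is built into the face/conjugate-face pairing; the rest is a direct application of the Cayley/projection machinery already developed in Subsection on decompositions. No further obstacles are expected.
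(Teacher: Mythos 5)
Your proof is correct and takes the same route that the paper intends when it labels this ``a corollary to the proof'' of Theorem~\ref{thm:Cab-dissect}: the forward inclusion is the inner-product computation already displayed in that proof, and the reverse inclusion is the same $\pi_C$-decomposition argument (Lemma~\ref{lem:C-decomp}, Lemma~\ref{lem:Phat}, Corollary~\ref{cor:piC}), which you have simply spelled out explicitly --- in particular the check that $s = \pi_{C^\vee}(-p)$ via the orthogonality $\iprod{r}{s}=0$ coming from the face/conjugate-face pairing, a step the paper leaves implicit.
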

 \color{black}

\bibliographystyle{amsplain}
\addcontentsline{toc}{section}{\refname}\bibliography{AB-inequalities}

\end{document}